\renewcommand{\emph}[1]{{\bf #1}}
\newcommand{\F}{\mathbb{F}}
\newcommand{\C}{\mathbb{C}}
\newcommand{\N}{\mathbb{N}}
\newcommand{\Z}{\mathbb{Z}}
\newcommand{\Q}{\mathbb{Q}}
\newcommand{\Proj}{\mathbb{P}}
\newcommand{\Orbit}{\mathcal{O}}
\newtheorem{definition}{Definition}[section]
\newtheorem{lemma}[definition]{Lemma}
\newtheorem{corollary}[definition]{Corollary}
\newtheorem{theorem}[definition]{Theorem}
\newtheorem{proposition}[definition]{Proposition}
\theoremstyle{remark}
\newtheorem{example}[definition]{Example}
\newtheorem{remark}[definition]{Remark}
\DeclareMathOperator{\rank}{rank}
\DeclareMathOperator{\GL}{GL}
\DeclareMathOperator{\Tr}{Tr}
\DeclareMathOperator{\End}{End}
\DeclareMathOperator{\Hom}{Hom}
\DeclareMathOperator{\Ext}{Ext}
\DeclareMathOperator{\pd}{pd}
\DeclareMathOperator{\gldim}{gldim}
\DeclareMathOperator{\ext}{ext}
\DeclareMathOperator{\Ker}{Ker}
\DeclareMathOperator{\Bild}{Im}
\DeclareMathOperator{\Mod}{mod}
\DeclareMathOperator{\MMod}{Mod}
\DeclareMathOperator{\id}{id}
\DeclareMathOperator{\Mat}{Mat}
\DeclareMathOperator{\codim}{codim}
\DeclareMathOperator{\Spec}{Spec}
\DeclareMathOperator{\OGr}{Gr}
\DeclareMathOperator{\OFl}{Fl}
\DeclareMathOperator{\ORep}{Rep}
\DeclareMathOperator{\Orep}{rep}
\DeclareMathOperator{\ORepFl}{RepFl}
\DeclareMathOperator{\ORepHom}{RepHom}
\newcommand{\Sch}[1]{\mathrm{#1}}
\newcommand{\flvec}[1]{\seqv{\dvec{#1}}}
\newcommand{\tensor}{\otimes}
\newcommand{\qbinom}[2]{\begin{bmatrix}#1\\#2\end{bmatrix}}
\newcommand{\repK}[2]{\ensuremath{\Orep(#1, #2)}}
\newcommand{\Rep}[2][]{\ensuremath{\ORep_{#1}({#2})}}
\newcommand{\RepK}[3][]{\ensuremath{\ORep_{#1}({#2}, {#3})}}
\newcommand{\Gr}[3][]{\ensuremath{\OGr_{#1}\binom{#3}{#2}}}
\newcommand{\RepFl}[2][]{\ensuremath{\ORepFl_{#1}\left({#2}\right)}}
\newcommand{\RepHom}[2][]{\ensuremath{\ORepHom_{#1}\left({#2}\right)}}
\newcommand{\Fl}[3][]{\ensuremath{\OFl_{#1}\binom{#3}{#2}}}
\newcommand{\euf}[1]{\left\langle {#1} \right\rangle}
\newcommand{\lrang}[1]{\left\langle {#1} \right\rangle}
\newcommand{\bform}[2]{\lrang{\,{#1}\, {,}\, {#2}\,}}
\newcommand{\sbform}[2]{\left({#1}\, {,}\, {#2}\right)}
\newcommand{\dvec}[1]{{\underline{#1}}}
\newcommand{\ledeg}{\le_{\mathrm{deg}}}
\newcommand{\degr}[1]{\lvert {#1} \rvert}
\newcommand{\dimv}[1]{\underline{\dim}\left(#1\right)}
\newcommand{\dimve}[1][]{\underline{\dim}\;{#1}}
\newcommand{\seqv}[1]{{\boldsymbol{#1}}}
\newcommand{\rever}[1]{\ensuremath\stackrel{\leftarrow}{#1}}
\def\clap#1{\hbox to 0pt{\hss#1\hss}}
\def\mathclap{\mathpalette\mathclapinternal}
\def\mathclapinternal#1#2{%
	\clap{$\mathsurround=0pt#1{#2}$}%
}
\author{Stefan Wolf}
\date{}
\address{%
Institut f\"ur Mathematik, Universit\"at Paderborn, Warburger Str. 100, 33098 Paderborn
}
\email{swolf@math.upb.de}
\title{A geometric version of BGP reflection functors}
\begin{document}
\newbox\ASYbox
\newdimen\ASYdimen
\def\ASYbase#1#2{\leavevmode\setbox\ASYbox=\hbox{#1}\ASYdimen=\ht\ASYbox%
\setbox\ASYbox=\hbox{#2}\lower\ASYdimen\box\ASYbox}
\def\ASYalign(#1,#2)(#3,#4)#5#6{\leavevmode%
\setbox\ASYbox=\hbox{#6}%
\setbox\ASYbox\hbox{\ASYdimen=\ht\ASYbox%
\advance\ASYdimen by\dp\ASYbox\kern#3\wd\ASYbox\raise#4\ASYdimen\box\ASYbox}%
\put(#1,#2){%
\wd\ASYbox 0pt\dp\ASYbox 0pt\ht\ASYbox 0pt\box\ASYbox%
}}
\def\ASYraw#1{#1}
\bibliographystyle{amsalphaurl}
\SelectTips{eu}{10}
\subjclass[2000]{16G20, 14L30}
\begin{abstract}
Quiver Grassmannians and quiver flags are natural generalisations of usual Grassmannians
and flags. They arise in the study of quiver representations and Hall
algebras. In general, they are projective varieties which are neither smooth nor irreducible.

We use a scheme theoretic approach to calculate their tangent space
and to obtain a dimension estimate similar to the one of Reineke in \cite{Reineke_monoid}.
Using this we can show that if there is a generic representation, then these varieties are smooth
and irreducible. If we additionally
have a counting polynomial we deduce that their Euler characteristic is positive
and that the counting polynomial evaluated at zero yields one.

After having done so, we introduce a geometric version of BGP reflection functors
which allows us to deduce an even stronger result about the constant coefficient of
the counting polynomial. We use this to obtain an isomorphism between the
Hall algebra at $q=0$ and Reineke's generic extension monoid in the Dynkin case.
\end{abstract}
\maketitle
One of the first non-trivial varieties one studies when learning algebraic geometry is
the Grassmannian. It consists of $r$-dimensional subspaces of a fixed $d$-dimensional
vector space $V$. The (vector space) Grassmannian is smooth and irreducible, has a nice
functor of points and the tangent space at an $r$-dimensional subspace $U$ of $V$
is given by linear maps from $U$ to $V/U$. A first generalisation of this variety is
the flag variety, which consists of a filtration of $V$ by subvector spaces of fixed
dimension vectors.

Although the vector space Grassmannian already leads to interesting geometrical problems,
it is still very easy. A generalisation is given by taking quiver Grassmannians. A quiver $Q$
is an oriented graph and a $k$-representation of $Q$ is given by assigning a $k$-vector
space to each vertex of $Q$ and a $k$-linear map between these vector spaces to each
arrow of $Q$. The dimension vector of a finite dimensional $k$-representation M
is the tuple of dimensions of the vector spaces at the vertices.
For a $k$-representation $M$ the quiver Grassmannian
consists of subrepresentations of $M$ of dimension vector $\dvec{d}$.
In general, this scheme is much more complex, or interesting, as
the vector space Grassmannian. It is generally neither smooth nor reduced nor irreducible.
Studying the quiver Grassmannian was first done by Schofield \cite{Schofield_genrep}
when he introduced a calculus of Schur roots. Recently it received attention for
example by Caldero and Chapoton \cite{CalderoChapoton_clusterhall} when they
showed that its Euler characteristic gives coefficients in the cluster algebra.
Caldero and Reineke \cite{CalderoReineke_QuiverGrassmannian} showed that the Euler characteristic is positive provided
that the quiver is acyclic and that the representation is rigid by using
Lusztig's pervese sheaf approach \cite{Lusztig_quantum} to quantum groups.
There is also the natural generalisation to quiver flags, but up to now there has been
no work investigating geometric properties of those.

The first part of this paper deals with a geometric analysis of quiver flags and quiver Grassmannians.
We introduce their functors of points and calculate their tangent spaces. In order to do this
for the quiver flag we use a generalisation of tensor algebras. Then we use this
information and Chevalley's theorem to obtain a dimension estimate for the quiver flag similar
to the one of Reineke \cite{Reineke_monoid} for quiver Grassmannians, but here it holds not only in characteristic $0$.
Afterwards, we use this to show smoothness and irreducibility of the quiver flag if there
is a generic representation. Finally, under this assumption and additionally having a counting polynomial,
we obtain the positivity of the Euler characteristic and
that the constant term of the counting polynomial is $1$.

There is Ringel's Hall algebra approach to quantum groups \cite{Ringel_hallalgsandquantumgroups}.
He generalised a construction of Steinitz \cite{Steinitz_abelsch} and
Hall \cite{Hall_part}
to obtain an associative algebra structure
on the $\Q$-vector space $\mathcal{H}_k(Q)$ with
basis the isomorphism classes of representations of $Q$ over a finite field $k$, the Ringel-Hall algebra.
The structure constants are given by counting numbers of subrepresentations having a fixed isomorphism class such that the
quotient also has a fixed isomorphism class.

The whole Ringel-Hall algebra is generally too complicated, and therefore one introduces the $\Q$-subalgebra
$\mathcal{C}_k(Q)$ generated by the isomorphism classes of simple representations without self-extensions.
The coefficients of a representation in the product of a sequence of simples is given by the number of
$k$-valued points of a certain quiver flag variety, therefore counting rational points of these varieties is
crucial there.
There is a generic version $\mathcal{C}_q(Q)$, a $\Q[q]$-algebra, such that
specialising $q$ to $|k|$ recovers $\mathcal{C}_k(Q)$.
Ringel's paper \cite{Ringel_hallalgsandquantumgroups}, for the Dynkin case, and later
Green's paper \cite{Green_hallalgs} combined with a result of Sevenhant and Van Den Bergh \cite{SevenhantVanDenBergh_kachall}, for the general case,
yield that, after twisting the multiplication with
the Euler form of $Q$, the generic composition algebra $\mathcal{C}_{q}(Q) \tensor \C(q)$
is isomorphic
to the
positive part of the quantised enveloping algebra of the Kac-Moody
Lie algebra corresponding to $Q$.

Reineke \cite{Reineke_monoid} showed that one has a monoid structure on
the set of irreducible closed subvarieties of the representation
variety. The multiplication is
given by taking all possible extensions. This monoid is called
the generic extension monoid $\mathcal{M}(Q)$. Similarly to the Hall algebra,
it is in general too complicated, so one restricts itself to the
submonoid generated by the orbits of simple representations without self-extensions,
the composition monoid $\mathcal{CM}(Q)$.
The elements of the composition monoid are the varieties consisting of representations
having a composition series with prescribed composition factors in prescribed order
and are therefore closely related to quiver flags.

For the generic composition algebra viewed as a $\Q(v)$-algebra with $v^2 =q$
the (twisted) quantum Serre relations are defining as shown in \cite{Ringel_greenstheom}. Reineke
showed that the quantum Serre relations specialised to
$q=0$ hold in the composition monoid. They are in general not defining any more if we
specialise $q$ to $0$ in the composition algebra. But nonetheless one can conjecture,
as Reineke did in \cite{Reineke_genericexts} and \cite{Reineke_monoid}, that there is
a homomorphism of $\Q$-algebras
 \[\Phi \colon \mathcal{C}_0(Q) \rightarrow \Q\mathcal{CM}(Q)\]
sending simples to simples and therefore being automatically surjective.

The first step in this direction was done by Hubery \cite{Hubery_kronecker} showing that for
the Kronecker quiver
\[K =
\begin{array}{@{}c@{}}
\makeatletter%
\let\ASYencoding\f@encoding%
\let\ASYfamily\f@family%
\let\ASYseries\f@series%
\let\ASYshape\f@shape%
\makeatother%
\setlength{\unitlength}{1pt}
\includegraphics{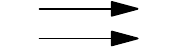}%
\definecolor{ASYcolor}{gray}{0.000000}\color{ASYcolor}
\fontsize{12.000000}{14.400000}\selectfont
\usefont{\ASYencoding}{\ASYfamily}{\ASYseries}{\ASYshape}%
\ASYalign(-47.027048,6.836534)(-0.500000,-0.500000){1.000000 0.000000 0.000000 1.000000}{$\bullet$}
\ASYalign(-4.187913,6.836534)(-0.500000,-0.500000){1.000000 0.000000 0.000000 1.000000}{$\bullet$}
\end{array}
\]
$\Phi$ is a homomorphism of $\Q$-algebras with non-trivial kernel. He did this by
calculating defining relations for $\mathcal{C}_0(K)$ and $\mathcal{CM}(K)$. He also was
able to give generators for the kernel of $\Phi$.

In order to get further results in this direction, we need to calculate many of the structure
coefficients in the composition algebra. These are given by counting
$k$-valued points of quiver flags of a representation
$M$ over finite fields $k$. For this, we develop
a framework for applying reflection functors
to quiver flags. We give a stratification of the flag into locally closed subschemes such
that on each strata we can nicely apply reflection functors. We will see that this
operations does not change the number of $k$-valued points modulo the number of elements of the field $k$.
Using this, we can immediately show that for the Dynkin
case the coefficients at $q=0$ are one or zero and by this we obtain immediately that
$\mathcal{C}_0(Q)$ and $\Q\mathcal{CM}(Q)$ are isomorphic.
\section{Geometric setup}
We want to define some functors from commutative rings to sets
which are schemes, ending up with the quiver Grassmannian. These should turn out to be
useful to define morphisms between schemes naturally coming up in representation theory of quivers.
All of this section should be standard. We say that a scheme $X$ over some field $k$ is a variety
if it is separated, noetherian and of finite type over $k$. The main reference for the
geometric methods is \cite{DG}.

For a commutative ring $R$ and an $R$-module $P$ we say that $P$ has rank $r$ if for every prime
$\mathfrak{p} \in \Spec R$ we have that the localisation $P_\mathfrak{p}$ is free of rank $r$ as an $R_\mathfrak{p}$-module.
As a consequence $P$ is projective.
Let $d, e, r \in \N$. We define various schemes via their functors of points from commutative rings to sets. Let $R$ be a commutative
ring. We define
\begin{align*}
    \Sch{Hom}(d,e) (R) \; &:= \Hom_R (R^d, R^e),\\
    \Sch{Hom}(d,e)_r(R)\; & := \Set{ f\in \Sch{Hom}(d,e) (R) |
    \begin{array}{l}
        \Bild f \text{ is a direct summand}\\ \text{of }R^e \text{ of } \rank r
    \end{array}},\\
    \Sch{Inj}(d,d+e)(R)\; & := \Set{ f\in \Sch{Hom}(d,d+e) (R) | f \text{ is a split injection}}\\
    & \; = \Sch{Hom}(d,d+e)_d(R)\\
    \intertext{and}
    \Sch{Surj}(d+e,e)(R) \; & := \Set{ g\in \Sch{Hom}(d+e,e) (R)| g \text{ is surjective} }\\
    & \; = \Sch{Hom}(d+e,e)_e(R).
\end{align*}
We define the Grassmannian scheme via its functor of points
    \[\Gr{d}{d+e} (R) := \Set{ P \subset R^{d+e} | P \text{ is a direct summand of }
    \rank d}.\]
    If $\seqv{d} = (d^0,\dots,d^\nu)$ is a sequence of integers, we define the flag
    scheme $\OFl(\seqv{d}) \subset \prod \Gr{d^i}{d^\nu}$
    as
 \[\OFl(\seqv{d}) (R):= \Set{ (U^0, \dots, U^\nu) \in \prod_{i=0}^\nu \Gr{d^i}{d^\nu}(R) |
 \begin{array}{c}
     U^i \subset U^{i+1} \text{ for}\\ \text{all } 0 \le i < \nu
 \end{array}}.\]
    Let $n \in \N$. We define the group scheme $\GL_n$ via its functor of points
    \[ \GL_n (R) := \GL(R^n). \]
\begin{lemma}
    Let $d, e, r \in \N$ and $\seqv{d} = (d^0,\dots,d^\nu)$ a sequence of integers. Then
    \begin{enumerate}
        \item $\Sch{Hom}(d,e)$ is an affine space.
        \item $\Sch{Hom}(d,e)_r$ is a locally closed subscheme of $\Sch{Hom}(d,d+e)$.
        \item $\Sch{Inj}(d,d+e)$ is an open subscheme of $\Sch{Hom}(d,d+e)$.
        \item $\Sch{Surj}(d+e,e)$ is an open subscheme of $\Sch{Hom}(d+e,e)$.
        \item $\Gr{d}{d+e}$ and $\OFl(\seqv{d}) \subset \prod \Gr{d^i}{d^\nu}$
            are projective schemes, smooth over $\Z$.
        \item $\GL_n$ is a smooth group scheme over $\Z$.
        \item The projection $\Sch{Inj}(d,d+e) \rightarrow \Gr{d}{d+e}$ sending each
    $f$ to $\Bild(f)$ is a principal $\GL_d$-bundle, locally trivial in the
    Zariski topology.
    \end{enumerate}
\end{lemma}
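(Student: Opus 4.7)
\noindent The plan is to handle the seven claims in order, in each case translating the functor-of-points definition into concrete matrix conditions and then invoking standard material. A single thread runs through (1)--(4): if $f\colon R^d\to R^e$ has matrix $A$, then the rank condition on its image is controlled by the Fitting ideals of $\Cok(f)$, i.e.\ by the ideals of minors of $A$. Claim (1) is immediate since $\Sch{Hom}(d,e)$ is represented by $\Spec\Z[x_{ij}]\cong\A^{de}_\Z$, and (6) follows likewise since $\GL_n$ is represented by $\Spec\Z[x_{ij},t]/(t\det(x_{ij})-1)$, a smooth affine group scheme over $\Z$.

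For (2), $\Cok(f)$ is projective of rank $e-r$ exactly when the $r\times r$ minors of $A$ generate the unit ideal while the $(r+1)\times(r+1)$ minors all vanish; intersecting these conditions exhibits $\Sch{Hom}(d,e)_r$ as the intersection of an open and a closed subscheme of $\Sch{Hom}(d,e)$, i.e.\ as a locally closed subscheme. Items (3) and (4) are the extremal cases where $r$ equals the smaller of the source and target ranks, so that minors of size $r+1$ do not exist and only the open condition survives.

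For (5) I would build $\Gr{d}{d+e}$ by gluing the standard affine charts indexed by $d$-subsets $I\subset\{1,\dots,d+e\}$: the subfunctor $U_I$ consisting of summands $P$ projecting isomorphically onto the coordinate summand $R^I$ is representable by $\A^{de}_\Z$, and these charts cover the Grassmannian functor because any rank-$d$ direct summand of $R^{d+e}$ is Zariski-locally on $\Spec R$ of this form. Projectivity follows from the Pl\"ucker embedding. The flag variety is then obtained inductively as a tower of Grassmannian bundles, e.g.\ via $\OFl(d^0,\dots,d^\nu)\to\OFl(d^1,\dots,d^\nu)$ with fibre $\Gr{d^0}{d^1}$, so that smoothness and projectivity propagate up. Finally for (7), $\GL_d$ acts freely on $\Sch{Inj}(d,d+e)$ by precomposition, the projection to $\Gr{d}{d+e}$ sends $f$ to $\Bild f$, and over each chart $U_I$ a canonical basis of the universal summand provides a section, hence a Zariski-local trivialisation. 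The main point that requires care is the functorial gluing in (5) --- namely, showing that the cover by the $U_I$ genuinely recovers the Grassmannian functor --- after which (7) reads off the trivialisations and the remainder is routine bookkeeping.
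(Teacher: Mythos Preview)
Your proposal is correct. The paper itself does not argue any of the seven points: its entire proof reads ``For (1)--(4) and (7) see, for example, \cite{bill_homsgenrep}. For (5) and (6) see \cite{DG}.'' Your sketch supplies exactly the standard arguments those references contain --- Fitting-ideal conditions on minors for (2)--(4), the affine-chart gluing and Pl\"ucker embedding for (5), and the chartwise trivialisation for (7) --- so there is no substantive divergence to compare, only that you have written out what the paper delegates to the literature.
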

\begin{proof}
    For (1)--(4) and (7) see, for example, \cite{bill_homsgenrep}. For (5) and (6)
        see \cite{DG}.
\end{proof}

Now we generalise these schemes to quivers.
For standard notations and results about quivers we refer the reader to
\cite{Ringel_integral}.
A quiver $Q=(Q_0, Q_1, s, t)$ is a directed graph with a set of vertices $Q_0$, a set of
arrows $Q_1$ and maps
$s,t \colon Q_1 \rightarrow Q_0$, sending an arrow to its starting respectively terminating vertex.
In particular, we write $\alpha \colon s(\alpha) \rightarrow t(\alpha)$ for an $\alpha \in Q_1$. A quiver $Q$ is finite
if $Q_0$ and $Q_1$ are finite sets. In the following, all quivers will be finite.
For a quiver $Q=(Q_0, Q_1, s ,t)$ we define $Q^{op} := (Q_0, Q_1, t, s)$ as the quiver with all arrows reversed.

A path of length $r\ge0$ in $Q$ is a sequence of arrows $\xi = \alpha_1 \alpha_2 \cdots \alpha_r$ such that
$t(\alpha_i) = s(\alpha_{i+1})$ for all $1 \le i < r$. We write $t(\xi) := t(\alpha_r)$ and $s(\xi):= s(\alpha_1)$.

For each $i \in Q_0$ there is the
trivial path $\epsilon_i$ of length zero starting and terminating in the vertex $i$. We identify the elements of $Q_0$ with
paths of length $0$. We denote by $Q(i,j)$ the
set of paths starting at the vertex $i$ and terminating at the vertex $j$.

If $R$ is a ring, then the path algebra $RQ$ has as basis the set of
paths and the multiplication $\xi \cdot \zeta$ is given by
the concatenation of paths if $t(\xi)=s(\zeta)$ or zero otherwise. In particular, the $\epsilon_i$ are pairwise orthogonal
idempotents of $RQ$, that is $\epsilon_i \epsilon_j = \delta_{ij} \epsilon_i$.

Let $Q_r$ denote the set of paths of length $r$. This extends the notation of vertices $Q_0$ and arrows $Q_1$. Then
\[ RQ = \bigoplus_{r\ge 0} RQ_r,\]
where $RQ_r$ is the free $R$-module with basis the elements of $Q_r$. By construction,
\[RQ_r RQ_s = RQ_{r+s},\]
thus
$RQ$ is an $\N$-graded $R$-algebra.

We define a partial order on
$\Z Q_0$ by $\dvec{d} = \sum_{i} d_i \epsilon_i \ge 0$ if and only if $d_i \ge 0$ for all $i\in Q_0$.
An element $\dvec{d} \in \Z Q_0$ is called a dimension vector.
We endow $\Z Q_0$ with a bilinear form $\bform{\cdot}{\cdot}_Q$ defined by
\[
\bform{\dvec{d}}{\dvec{e}}_Q := \sum_{i \in Q_0} d_i e_i - \sum_{\mathclap{\alpha \colon i \rightarrow j \in Q_1}} d_i e_j.
\]
This form is generally called the Euler form or the Ringel form.
We also define its symmetrisation
\[
\sbform{\dvec{d}}{\dvec{e}}_Q :=\bform{\dvec{d}}{\dvec{e}}_Q + \bform{\dvec{e}}{\dvec{d}}_Q.
\]

Let $Q$ be a quiver and $k$ be a field. A $k$-representation
$M$ of $Q$ is given by finite dimensional $k$-vector spaces
$M_i$ for each $i\in Q_0$ and $k$-linear maps $M_\alpha \colon M_i \rightarrow M_j$ for each $\alpha \in Q_1$.
If $M$ and $N$ are $k$-representations of $Q$, then a morphism $f \colon M \rightarrow N$ is given by
$k$-linear maps
$f_i \colon M_i \rightarrow N_i$ for each $i \in Q_0$ such that the following diagram commutes
\[
\xymatrix{
M_i \ar[r]^{M_\alpha} \ar[d]_{f_i} & M_j \ar[d]^{f_j}\\
N_i \ar[r]_{N_\alpha} & N_j
}
\]
for all $\alpha \colon i \rightarrow j \in Q_1$. Denote by $\repK{Q}{k}$ the category of finite
dimensional $k$-representations of $Q$.

We will use the following notations for a $k$-algebra $\Lambda$ and two finite dimensional
$\Lambda$-modules $M$ and $N$:
\begin{itemize}
  \item $(M,N)_\Lambda := \Hom_\Lambda(M,N)$,
  \item $(M,N)^i_\Lambda := \Ext^i_\Lambda(M,N)$,
  \item $[M,N]_\Lambda := \dim_k\Hom_\Lambda(M,N)$,
  \item $[M,N]^i_\Lambda := \dim_k\Ext^i_\Lambda(M,N)$.
\end{itemize}
Note that $[M,N]^0 = [M,N]$ and $(M,N)^0 = (M,N)$.
If $Q$ is a quiver, we define $(M,N)_Q := (M,N)_{kQ}$ if $M$ and $N$ are $k$-representations and similarly for
the other notations. For dimension vectors $\dvec{d}$ and $\dvec{e}$ we denote
by $\hom_{kQ}(\dvec{d},\dvec{e})$ the minimal value of $[M,N]_Q$ for $M \in \RepK[Q]{\dvec{d}}{k}$ and
$N \in \RepK[Q]{\dvec{e}}{k}$ and similarly for $\ext^i_{kQ}(\dvec{d},\dvec{e})$. More generally, if
$\mathcal{A}$ and $\mathcal{B}$ are subsets of $\RepK[Q]{\dvec{d}}{k}$ and $\RepK[Q]{\dvec{e}}{k}$ respectively, then
$\hom(\mathcal{A}, \mathcal{B})$ and $\ext^i(\mathcal{A}, \mathcal{B})$ are defined analogously.
Whenever the algebra, the field or the quiver is clear from the context, we omit them from the notation.
If our algebra is hereditary we denote $\Ext^1$ by $\Ext$.

Let $M$ be a $k$-representation. A sequence of dimension vectors $\flvec{d}=(\dvec{d}^0, \dots, \dvec{d}^\nu)$
is called a filtration of $M$, or of $\dimve M$, if $\dvec{d}^i \le \dvec{d}^{i+1}$, $\dvec{d}^0 = 0$ and
$\dvec{d}^\nu = \dimve M$.

Let $Q$ be a quiver, $\dvec{d}$ and $\dvec{e}$ dimension vectors and
$\flvec{d} = (\dvec{d}^0, \dots, \dvec{d}^\nu)$ a filtration. By taking
fibre products we can generalise the previous constructions and define
$\Sch{Hom}(\dvec{d}, \dvec{e})$,
$\Sch{Inj}(\dvec{d},\dvec{d}+\dvec{e})$,
$\Sch{Surj}(\dvec{d}+\dvec{e},\dvec{e})$,
$\Gr{\dvec{d}}{\dvec{d}+\dvec{e}}$,
$\OFl(\flvec{d})$ and
$\GL_\dvec{d}$ pointwise.  For all above schemes we can do base change
to any commutative ring $k$, and we will denote these schemes
by e.g. $\Sch{Hom}(\dvec{d}, \dvec{e})_k$.
\begin{definition}
    Define the representation scheme
    \[
    \Rep[Q]{\dvec{d}} := \prod_{\alpha:i \rightarrow j} \Sch{Hom}(d_i,d_j).
    \]
    This is isomorphic to an affine space. Moreover, $\GL_\dvec{d}$ acts
    by base change on $\Rep[Q]{\dvec{d}}$.
\end{definition}
\begin{remark}
   For each scheme defined for a quiver $Q$ we often omit the index if $Q$ is obvious, e.g.
   $\Rep{\dvec{d}} = \Rep[Q]{\dvec{d}}$.
\end{remark}
More generally, we have the module scheme.
\begin{definition}
    Let $k$ be a field, $\Lambda$ a finitely generated $k$-algebra and
    \[\rho \colon k\langle x_1, \dots, x_m\rangle \rightarrow \Lambda\]
    a surjective map from the free associative algebra to $\Lambda$. The affine $k$-scheme $\MMod_\Lambda(d)$ is defined by
    \[ \MMod_\Lambda(d) (R) =
    \Set{ (M^1, \dots, M^m) \in (\End(R^d))^m | \begin{array}{c}f(M^1, \dots, M^m) = 0 \\  \forall\  f \in \Ker \rho\end{array}}.\]
    There is a natural $\GL_d$-action on $\MMod_\Lambda(d)$ given by conjugation.
\end{definition}
\begin{remark}
    We have that $\MMod_\Lambda(d)$ is isomorphic to the functor
    \[
    R \mapsto \Hom_{R-alg}(\Lambda \tensor R, \End(R^d)).
    \]
    Therefore, another choice of $\rho$ gives an isomorphic scheme.
\end{remark}

Let $\{\epsilon_i\}_{i\in I}$, $I$ some index set, be a complete set of (not necessarily primitive) orthogonal idempotents for $\Lambda$.
We say that a $\Lambda$-module $M$ has dimension vector $\dvec{d}=\sum d_i \epsilon_i \in \N^I$ with respect
to this idempotents, if $\dim M_i = \dim M \epsilon_i = d_i$.
The subscheme $\MMod_\Lambda(\dvec{d})$ consisting of $\Lambda$-modules of dimension vector $\dvec{d}$ is an open and closed
subscheme of $\MMod_\Lambda(d)$, therefore
\[ \MMod_\Lambda(d) = \coprod \MMod_\Lambda (\dvec{d}).\]

Let $Q$ be a quiver, $k$ a field, $d$ an integer and $\dvec{d}$ a dimension vector such
that $\sum d_i = d$. Then, after choosing an isomorphism
$\bigoplus k^{d_i} \rightarrow k^d$, there is a natural immersion
\[
\Rep[Q]{\dvec{d}} \rightarrow \MMod_{kQ}(\dvec{d}) \subset \MMod_{kQ}(d).
\]

Note that $\GL_\dvec{d} (k)$-orbits in $\Rep[Q]{\dvec{d}}(k)$ are in one-to-one
correspondence with isomorphism classes of $k$-representations of dimension vector $\dvec{d}$. Similarly,
$\GL_{d}(k)$-orbits in $\MMod_\Lambda(d)(k)$ are in one-to-one correspondence
with isomorphism class of $\Lambda$-modules of $k$-dimension $d$.

We define now quiver flags and quiver Grassmannians as varieties. Choose a $k$-representation
$M \in \Rep[Q]{\dvec{d} + \dvec{e}}(k)$.
We set
\[ \Gr[Q]{\dvec{d}}{M}(R) := \Set{ U \in \Gr{\dvec{d}}{\dvec{d} + \dvec{e}}_k (R) |
\begin{array}{c}
    U \text{ is a subrepesentation of }\\
    M \tensor_k R
\end{array}}
\]
for each $k$-algebra $R$. Then $\Gr[Q]{\dvec{d}}{M}$ is a closed subscheme of $\Gr{\dvec{d}}{\dvec{d}+\dvec{e}}_k$ and
therefore projective. In a similar way we obtain a closed subscheme
$\Fl[Q]{\flvec{d}}{M}$ of $\OFl(\flvec{d})_k$ for a filtration $\flvec{d}$ of $M$.

Let $\Lambda$ be a $k$-algebra and $d$, $e$ two integers.
For a $\Lambda$-module $M \in \MMod_\Lambda(d+e)$
we define
\[ \Gr[\Lambda]{d}{M}(R) := \Set{ U \in \Gr{d}{d + e}(R) | U \text{ is a submodule of }
M \tensor_k R}.
\]

Again, for some set of idempotents $\{\epsilon_i\}$
we define $\Gr[\Lambda]{\dvec{d}}{M}$ as the subscheme consisting of submodules having dimension vector $\dvec{d}$
and obtain
\[ \Gr[\Lambda]{d}{M} = \coprod \Gr[\Lambda]{\dvec{d}}{M},\]
each $\Gr[\Lambda]{\dvec{d}}{M}$ being open and closed in \Gr[\Lambda]{d}{M}.
Moreover, for $\Lambda = kQ$ we obtain that
\[ \Gr[Q]{\dvec{d}}{M} \cong \Gr[kQ]{\dvec{d}}{M} \]
via the immersion $\Rep[Q]{\dvec{d}} \rightarrow \MMod_{kQ}(\dvec{d})$.

We have the following well-known result.
\begin{proposition}
    Let $X, Y$ be two schemes, $Y$ being irreducible.
    Let $f \colon X \rightarrow Y$ be a morphism of schemes such that $f$ is open and for each $z \in Y$ the
    fibre $f^{-1}(z)$ is irreducible. Then $X$ is irreducible.
    \label{isch:propn:openirred}
\end{proposition}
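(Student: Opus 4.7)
The plan is to argue by contradiction, following the standard pattern for irreducibility statements phrased in terms of nonempty open subsets. Recall that a topological space $Z$ is irreducible if and only if any two nonempty open subsets of $Z$ have nonempty intersection. So I would start by assuming that $X$ is reducible, which means we can find nonempty open subsets $U_1, U_2 \subset X$ with $U_1 \cap U_2 = \emptyset$, and aim to derive a contradiction.

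Next I would use the openness of $f$: since $f$ is an open morphism, $f(U_1)$ and $f(U_2)$ are open subsets of $Y$, and they are nonempty because $U_1, U_2$ are. The irreducibility of $Y$ then forces $f(U_1) \cap f(U_2) \neq \emptyset$. Pick any point $z$ in this intersection; by construction there exist $x_1 \in U_1$ and $x_2 \in U_2$ with $f(x_1) = f(x_2) = z$, so both intersections $V_i := U_i \cap f^{-1}(z)$ are nonempty open subsets of the fibre $f^{-1}(z)$.

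Finally I would invoke the hypothesis that $f^{-1}(z)$ is irreducible: this means any two nonempty open subsets of $f^{-1}(z)$ meet, so $V_1 \cap V_2 \neq \emptyset$. But $V_1 \cap V_2 \subset U_1 \cap U_2 = \emptyset$, which is the desired contradiction. Hence $X$ is irreducible.

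I do not expect any real obstacle here; the proof is a bookkeeping exercise once one remembers the equivalent characterization of irreducibility via nonempty open sets. The only subtlety worth pointing out is that one must work with the fibre $f^{-1}(z)$ as a topological space with its induced (subspace) topology, so that $U_i \cap f^{-1}(z)$ is genuinely open in $f^{-1}(z)$; this is automatic and requires no scheme-theoretic machinery beyond the definition of the fibre as a topological subspace.
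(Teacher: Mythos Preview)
Your proposal is correct and matches the paper's proof essentially verbatim: both pick two nonempty open subsets of $X$, push them forward to nonempty opens in $Y$ via the open map $f$, find a common point $z$ in their images using irreducibility of $Y$, and then use irreducibility of the fibre $f^{-1}(z)$ to intersect them there. The only cosmetic difference is that you frame it as a contradiction while the paper argues directly that any two nonempty opens meet.
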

\begin{proof}
    Take $\emptyset \neq U, V \subset X$ and $U, V$ open. We need to show that $U \cap V$ is non-empty. We know that
    $f(U)$ and $f(V)$ are non-empty and open in $Y$. Therefore, they intersect non-trivially. Let
    $y \in f(U) \cap f(V)$.
    By definition, we have
    that $U \cap f^{-1}(y) \neq \emptyset$ and $V \cap f^{-1}(y) \neq \emptyset$ and both are open in $f^{-1}(y)$. By assumption,
    the fibre $f^{-1}(y)$ is irreducible and therefore $U \cap V \cap f^{-1}(y) \neq \emptyset$ and we are done.
\end{proof}
\begin{remark}
    The same is true for arbitrary topological spaces, since the proof relies purely on topology.
\end{remark}

\section{Tangent spaces}
Let $k$ be a field and $\Lambda$ a finitely generated $k$-algebra. We want to calculate the tangent
space at a point of $\Gr[\Lambda]{d}{M}$.
\begin{lemma}
    Let $d, e \in \N$, $k$ a field, $\Lambda$ a finitely generated $k$-algebra and $M \in \MMod_\Lambda(d+e)(k)$.
  Let $U \in \Gr[\Lambda]{d}{M}(K)$, for a field extension $K$ of $k$. Then
  \[ T_U \Gr[\Lambda]{d}{M} \cong \Hom_{\Lambda\tensor_k K} (U, (M\tensor_k K)/U). \]
\end{lemma}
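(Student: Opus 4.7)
My plan is to compute the tangent space via the standard dual-number description. Set $D := K[\epsilon]/(\epsilon^2)$; since $\Gr[\Lambda]{d}{M}$ is defined by a functor of points, $T_U \Gr[\Lambda]{d}{M}$ is the fibre over $U$ of the reduction map $\Gr[\Lambda]{d}{M}(D) \to \Gr[\Lambda]{d}{M}(K)$ induced by $\epsilon \mapsto 0$. Unwinding the definition, such a lift is a $\Lambda \tensor_k D$-submodule $\tilde U \subset M \tensor_k D$ that is a $D$-direct summand of rank $d$ and reduces to $U$.

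First I would forget the module structure and analyse lifts inside the ambient Grassmannian $\Gr{d}{d+e}_K$ at $U$. Fixing a $K$-basis $u_1, \dots, u_d$ of $U$ and $D$-generators $u_i + \epsilon v_i$ of $\tilde U$ with $v_i \in M \tensor_k K$, a short computation using $\epsilon^2 = 0$ gives
\[ \tilde U = \Set{u + \epsilon \tilde\phi(u) | u \in U} + \epsilon U, \]
where $\tilde\phi \colon U \to M \tensor_k K$ is the $K$-linear extension of $u_i \mapsto v_i$. Replacing $v_i$ by $v_i + u''$ for $u'' \in U$ does not change $\tilde U$, so the lift is uniquely determined by the $K$-linear map $\phi \colon U \to (M \tensor_k K)/U$ induced by $\tilde\phi$; conversely every such $\phi$ produces a valid direct summand (a complement is supplied by any $K$-linear splitting $M \tensor_k K = U \oplus C$, extended $D$-linearly). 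This yields the classical bijection between lifts and $\Hom_K(U, (M \tensor_k K)/U)$.

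Next I would impose the $\Lambda \tensor_k D$-submodule condition. For $\lambda \in \Lambda$ and $u \in U$,
\[ \lambda \cdot (u + \epsilon \tilde\phi(u)) = \lambda u + \epsilon \lambda \tilde\phi(u), \]
and this lies in $\tilde U$ iff $\lambda \tilde\phi(u) - \tilde\phi(\lambda u) \in U$, equivalently $\phi(\lambda u) = \lambda \phi(u)$ in $(M \tensor_k K)/U$. Thus the submodule condition is precisely $\Lambda$-linearity of $\phi$, and since $U$ and $(M\tensor_k K)/U$ are $K$-vector spaces, this coincides with $\Lambda \tensor_k K$-linearity, giving the desired isomorphism.

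The only non-formal step is the Grassmannian identification: one must verify carefully that every $D$-direct summand $\tilde U$ of $M \tensor_k D$ reducing to $U$ has the claimed graph-plus-$\epsilon U$ shape. This amounts to a routine Nakayama-style check that $\tilde U$ is $D$-free of rank $d$ and that $\tilde U \cap \epsilon(M \tensor_k K) = \epsilon U$; once this is in hand, the translation into $\Lambda$-linearity is a direct calculation.
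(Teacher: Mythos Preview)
Your proposal is correct and follows essentially the same dual-numbers approach as the paper: the paper defines $S_f := \{u + v\varepsilon \mid u \in U,\ \pi(v)=f(u)\}$ for $f \in \Hom_\Lambda(U,M/U)$, which is exactly your ``graph-plus-$\epsilon U$'' description, and verifies both directions of the bijection explicitly. The only difference is organisational---you first treat the plain Grassmannian and then impose $\Lambda$-linearity, whereas the paper handles both at once---and the paper spells out the ``Nakayama-style check'' you flag by computing $\dim_k S/(U\varepsilon)=d$ directly.
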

\begin{proof}
  In this proof we use left modules since the notation is more convenient.
  For the tangent space we use the definition of \cite[I, \S 4, no 4]{DG}.
  By base change, we can assume that $K=k$. We prove the claim by doing a $k[\varepsilon]$-valued point calculation ($\varepsilon^2 = 0$).
  The short exact sequence
    \[
    \xymatrix{ 0 \ar[r]& U \ar[r]^\iota & M  \ar[r]^\pi \ar@/^/@{.>}[l]^p
    & M/U \ar[r] \ar@/^/@{.>}[l]^j & 0}
    \]
  is split in the category of $k$-vector spaces, therefore there is a retraction $p$ of $\iota$ and a
  section $j$ of $\pi$. We consider elements of $U$ as elements of $M$ via the inclusion $\iota$.

  The map $k[\varepsilon] \rightarrow k$ given by $s + r \epsilon \mapsto s$ induces
  a map $\theta\colon V \tensor k[\varepsilon] \rightarrow V$ for each $k$-vector space $V$.

  For each homomorphism $f \in \Hom_\Lambda(U, M/U)$ we define
  \[
  S_f := \Set{ u + v \varepsilon | u \in U,\ \pi(v)=f(u) } \subset M \tensor k[\varepsilon].
  \]
  Note that $\theta(S_f) = U$.
  We need to show that $S_f \in \Gr[\Lambda \tensor {k[\varepsilon]}]{d}{M \tensor k[\varepsilon]}$
  and that every element $S$ of the Grassmannian with $\theta(S) = U$ arises in this way.

  First, we show that $S_f$ is a $\Lambda \tensor k[\varepsilon]$-submodule. Let $u + v\varepsilon \in S_f$
  and $r +s\varepsilon \in \Lambda \tensor k[\varepsilon]$. Then we have
  \[ (r+s\varepsilon)(u+v\varepsilon)= ru + (rv + su)\varepsilon. \]
  Since $\pi(v) = f(u)$, $u \in U$ and $f$ is a $\Lambda$-homomorphism we obtain that
  \[ \pi(rv + su) = r\pi(v) = r f(u) = f(ru). \]
  Therefore, $S_f$ is a $\Lambda \tensor k[\varepsilon]$ submodule.

  Now we show that $S_f$ is, as a $k[\varepsilon]$-module, a summand of $M \tensor k[\varepsilon]$ of
  rank $d$. Let $\tilde f\in \Hom_k(U, M):=j \circ f$
  be a $k$-linear lift of $f$.Let
  \begin{alignat*}{2}
      \phi &\colon\quad& U \tensor k[\varepsilon] & \rightarrow M \tensor k[\varepsilon]\\
      && u + v\varepsilon & \mapsto u + (\tilde f (u) +v)\varepsilon.
  \intertext{Obviously, $\phi$ is $k[\varepsilon]$-linear and $\Bild \phi = S_f$. Moreover,
  $\phi$ is split with retraction}
      \psi &\colon\quad& M \tensor k[\varepsilon] & \rightarrow U \tensor k[\varepsilon]\\
      && x + y\varepsilon & \mapsto p(x) + p(y-\tilde f\circ p(x))\varepsilon.
  \end{alignat*}
  Therefore, $S_f$ is a summand of $M \tensor k[\varepsilon]$ of rank $d$.
  
  On the other hand, let $S \in \Gr[\Lambda \tensor {k[\varepsilon]}]{d}{M \tensor k[\varepsilon]}$
  such that $\theta(S)=U$. Then, $U\varepsilon$ is a $k$-subspace of $S$ with
  $\dim_k U\varepsilon = d$, therefore $\dim_k S/(U\varepsilon) = d$. The map sending
  $u + v\varepsilon \in S$ to $u \in U$ is surjective, therefore the induced map from
  $S/(U\varepsilon)$ is an isomorphism.
  Hence, for each $u \in U$ there is a $v \in M$,
  such that $u + v\varepsilon \in S$ and $v$ is unique up to a element in $U\varepsilon$.
  Set $f(u) := \pi(v)$ and, by the discussion before, this does not depend on the choice of $v$
  and we have that $S = S_f$. Moreover,
  $f \in \Hom_{\Lambda}(U,M/U)$: Let $r \in \Lambda$, $u \in U$ and $v \in M$, such that $u+v\varepsilon \in S$.
  Then, $ru + rv\varepsilon \in S$. By definition, $f(ru) = \pi(rv) = r\pi(v) = rf(u)$.
\end{proof}

%
%
\section{Tensor algebras}
We now want to calculate the tangent space at the quiver flag variety by using the previous result on
the tangent space at the module Grassmannian. For this we use tensor algebras, or more precisely
a generalisation of them.

Let $\Lambda_0$ be a ring and $\Lambda_1$ a $\Lambda_0$-bimodule. Define the tensor ring
$T(\Lambda_0, \Lambda_1)$ to be the $\N$-graded $\Lambda_0$-module
\[ \Lambda := \bigoplus_{r\ge 0} \Lambda_r, \quad \Lambda_r := \Lambda_1 \tensor_{\Lambda_0} \dotsm \tensor_{\Lambda_0} \Lambda_1 \: (r \text{ times}),\]
with multiplication given via the natural isomorphism $\Lambda_r \tensor_{\Lambda_0} \Lambda_s \cong \Lambda_{r+s}$. If $\lambda \in \Lambda_r$
is homogeneous, we write $\degr{ \lambda} = r$ for its degree.

The graded radical of $\Lambda$ is the ideal $\Lambda_+ := \bigoplus_{r \ge 1} \Lambda_r$. Note that
$\Lambda_+ \cong \Lambda_1 \tensor_{\Lambda_0} \Lambda$ as right $\Lambda$-modules.

\begin{lemma}
	Let $R$ and $S$ be rings. Let $A_R$, $_R B_S$ and $C_S$ be modules over the corresponding rings. Assume that $_R B_S$ is $S$-projective
	and $R$-flat. Then
	\[ \Ext^n_S (A \tensor_R B, C) \cong \Ext^n_R (A, \Hom_S(B,C)).
	\]
\end{lemma}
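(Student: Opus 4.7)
The plan is to derive this by dimension-shifting: reduce the $\Ext^n$ statement to the classical Hom-tensor adjunction in degree zero, and then derive both sides simultaneously using one and the same projective resolution of $A$.

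First, I would recall the natural isomorphism
\[ \Hom_S(A \tensor_R B, C) \cong \Hom_R(A, \Hom_S(B, C)), \]
which holds because $B$ is an $R$-$S$-bimodule: the right $R$-action on $\Hom_S(B,C)$ is given by $(f \cdot r)(b) := f(rb)$, and on $A \tensor_R B$ the right $S$-structure comes from $B$. This is the $n=0$ case of the desired statement, so what remains is the derived version.

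Next, choose a projective resolution $P_\bullet \to A$ of $A$ in the category of right $R$-modules, and apply $- \tensor_R B$. Because $_R B$ is flat as a left $R$-module, the complex
\[ \dots \to P_1 \tensor_R B \to P_0 \tensor_R B \to A \tensor_R B \to 0 \]
remains exact. Because $B_S$ is projective, each $P_i \tensor_R B$ is a direct summand of a direct sum of copies of $B$ (as $P_i$ is a summand of a free right $R$-module), hence is projective as a right $S$-module. Thus $P_\bullet \tensor_R B$ is a projective resolution of $A \tensor_R B$ over $S$; this is where both hypotheses on $B$ are used.

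Finally, compute $\Ext^n_S(A \tensor_R B, C)$ as the $n$-th cohomology of $\Hom_S(P_\bullet \tensor_R B, C)$. By the degree-zero adjunction applied termwise (and natural in $A$), this complex is isomorphic to $\Hom_R(P_\bullet, \Hom_S(B, C))$, whose $n$-th cohomology is by definition $\Ext^n_R(A, \Hom_S(B, C))$. Taking cohomology then yields the claimed isomorphism. The only delicate point is the simultaneous verification that $P_\bullet \tensor_R B$ stays acyclic (requiring $R$-flatness of $B$) and projective in each degree (requiring $S$-projectivity of $B$); once both hold, the rest is a direct application of Hom-tensor adjunction at the cochain level.
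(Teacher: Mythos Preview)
Your proof is correct and follows essentially the same approach as the paper: take a projective resolution $P_\bullet$ of $A_R$, use $R$-flatness of $B$ to preserve exactness and $S$-projectivity of $B$ to ensure each $P_i \tensor_R B$ is $S$-projective, then apply Hom--tensor adjunction termwise and take cohomology. The only cosmetic difference is that the paper verifies projectivity of $P \tensor_R B$ via exactness of the functor $\Hom_S(P \tensor_R B,-) \cong \Hom_R(P,-) \circ \Hom_S(B,-)$, whereas you argue directly that it is a summand of a direct sum of copies of $B$.
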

\begin{proof}
	Choose a projective resolution $P_\bullet$ of $A_R$. The functor
    $- \tensor_R B$ is exact, and for any projective module $P_R$ the functor
	$\Hom_S(P \tensor_R B, -) \cong \Hom_R(P, -) \circ \Hom_S(B_S, -)$ is exact since $B_S$ is projective. Therefore,
	$P_\bullet \tensor_R B$ gives a projective resolution of $A \tensor_R B_S$ as an $S$-module. We obtain
	\begin{multline*}
		\Ext^n_S (A \tensor_R B, C) = H^n \Hom_S(P_\bullet \tensor_R B, C) \cong \\
		\cong H^n \Hom_R(P_\bullet, \Hom_S(B,C)) = \Ext^n_R (A, \Hom_S(B,C)).
	\end{multline*}
\end{proof}
The following proof is the standard proof for showing that a ``classical'' tensor algebra, i.e.
a tensor algebra where $\Lambda_0$ is semisimple, is hereditary.
\begin{theorem}
	Let $\Lambda$ be a tensor ring and $M \in \MMod \Lambda$.
Then there is a short exact sequence
\[ 0 \rightarrow M \tensor_{\Lambda_0} \Lambda_+ \xrightarrow{\delta_M} M \tensor_{\Lambda_0} \Lambda \xrightarrow{\epsilon_M} M \rightarrow 0,\]
where, for $m \in M$, $\lambda \in \Lambda$ and
$\mu \in \Lambda_1$,
\begin{align*}
	\epsilon_M ( m \tensor \lambda)& := m \cdot \lambda,\\
	\delta_M (m \tensor ( \mu \tensor \lambda) &:= m \tensor (\mu \tensor \lambda) - m \cdot \mu \tensor \lambda.
\end{align*}
Moreover, if $_{\Lambda_0} \Lambda_1$ is flat, then $\pd M \tensor_{\Lambda_0} \Lambda_+ \le \gldim \Lambda_0$ and
$\pd M \tensor_{\Lambda_0} \Lambda \le \gldim \Lambda_0$.
\end{theorem}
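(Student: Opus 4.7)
My plan has two parts: first prove exactness of the short exact sequence, then deduce the projective dimension bound.

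For exactness, the only easy piece is surjectivity of $\epsilon_M$, witnessed by the $\Lambda_0$-linear section $\sigma(m) := m\tensor 1$; and $\epsilon_M \circ \delta_M = 0$ is an immediate two-line computation. The main work is to show $\delta_M$ is injective and that $\Bild \delta_M = \Ker \epsilon_M$. For this, I would exploit the grading of $\Lambda$ and the filtration $F_n := \bigoplus_{r\le n} M\tensor_{\Lambda_0} \Lambda_r$ on $M\tensor_{\Lambda_0}\Lambda$, together with the analogous $G_n := \bigoplus_{1\le r\le n} M\tensor_{\Lambda_0}\Lambda_r$ on $M\tensor_{\Lambda_0}\Lambda_+$. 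The key observation is that on $m\tensor(\mu\tensor\lambda)$ with $\lambda\in\Lambda_{r-1}$, the term $m\tensor\mu\lambda$ lives in $F_r$ while $m\mu\tensor\lambda$ lives in $F_{r-1}$; hence $\delta_M(G_r)\subset F_r$, and the induced map $G_r/G_{r-1} \to F_r/F_{r-1}$ is the canonical isomorphism $M\tensor_{\Lambda_0}\Lambda_1\tensor_{\Lambda_0}\Lambda_{r-1}\xrightarrow{\sim}M\tensor_{\Lambda_0}\Lambda_r$ coming from the multiplication in $\Lambda$. Injectivity of $\delta_M$ is immediate from this, since a nonzero $y\in G_n\setminus G_{n-1}$ has nonzero image in the top graded piece.

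For exactness in the middle, I would argue by induction on $n$ that every $x = x_0+\dots+x_n \in F_n$ with $\epsilon_M(x)=0$ lies in $\Bild \delta_M$. The base case $n=0$ is trivial since $\epsilon_M$ is the identity on $F_0 = M$. For the inductive step, write $x_n = \sum_k m_k \tensor \mu_k \tensor \lambda_k$ in $M\tensor \Lambda_1\tensor \Lambda_{n-1}$ and set $y := \sum_k m_k\tensor\mu_k\tensor\lambda_k \in G_n$; then $\delta_M(y) = x_n - \sum_k m_k\mu_k\tensor\lambda_k$, so $x - \delta_M(y)$ lies in $F_{n-1}$ and is still killed by $\epsilon_M$. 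This is the crucial, mildly delicate step, but it is essentially a controlled induction.

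For the projective dimension bound, I would first show by induction on $r$ that each $\Lambda_r = \Lambda_1^{\tensor_{\Lambda_0} r}$ is flat as a left $\Lambda_0$-module: given left-flatness of $\Lambda_1$, flatness of $\Lambda_1\tensor_{\Lambda_0}\Lambda_{r-1}$ follows by composing the two exact endofunctors $-\tensor_{\Lambda_0}\Lambda_{r-1}$ and $-\tensor_{\Lambda_0}\Lambda_1$ on right $\Lambda_0$-modules. Consequently $_{\Lambda_0}\Lambda$ is flat, so $-\tensor_{\Lambda_0}\Lambda$ is exact on right $\Lambda_0$-modules; and by the tensor-restriction adjunction it sends right $\Lambda_0$-projectives to right $\Lambda$-projectives. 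Now take a projective resolution $P_\bullet\to M$ of length at most $\gldim \Lambda_0$ in $\Mod \Lambda_0$; applying $-\tensor_{\Lambda_0}\Lambda$ yields a projective resolution of $M\tensor_{\Lambda_0}\Lambda$ over $\Lambda$ of the same length. Applied to the right $\Lambda_0$-module $M\tensor_{\Lambda_0}\Lambda_1$ in place of $M$, together with $\Lambda_+\cong\Lambda_1\tensor_{\Lambda_0}\Lambda$, the same argument gives the bound for $M\tensor_{\Lambda_0}\Lambda_+$.

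The main obstacle is really just the bookkeeping in the filtration argument for exactness; the flatness and adjunction arguments in the second half are routine once one has observed that left-flatness of $\Lambda_1$ propagates to all of $\Lambda$.
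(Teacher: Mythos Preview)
Your proposal is correct and follows essentially the same route as the paper. The filtration/grading argument for exactness is identical to the paper's (the paper phrases the middle exactness as a direct sum decomposition $M\tensor_{\Lambda_0}\Lambda = (M\tensor_{\Lambda_0}\Lambda_0)\oplus\Bild\delta_M$, but the underlying induction is the one you wrote down). For the projective dimension bound the paper packages the argument via the preceding $\Ext$-adjunction lemma rather than directly applying $-\tensor_{\Lambda_0}\Lambda$ to a projective resolution, but the content is the same: flatness of $_{\Lambda_0}\Lambda$ plus the tensor--restriction adjunction.
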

\begin{proof}
	It is clear that $\epsilon_M$ is an epimorphism and that $\epsilon_M \delta_M = 0$. To see that $\delta_M$ is a monomorphism, we decompose
	$M \tensor \Lambda = \bigoplus_{r\ge 0} M \tensor \Lambda_r$ and similarly $M \tensor \Lambda_+$ as $\Lambda_0$-modules.
	Then $\delta_M$ restricts to
	maps $M \tensor \Lambda_r \rightarrow (M \tensor \Lambda_r) \oplus (M \tensor \Lambda_{r-1})$ for each $r \ge 1$ and moreover acts as
	the identity on the first component. In particular, if $\sum_{r=1}^t x_r \in \Ker(\delta_M)$ with $x_r \in M \tensor \Lambda_r$, then $x_t =0$.
	Thus $\delta_M$ is injective.

	Next we show that $M \tensor \Lambda = (M \tensor \Lambda_0) \oplus \Bild(\delta_M)$. Let $x = \sum_{r=0}^{t} x_r \in M \tensor \Lambda$.
	We show that $x \in (M \tensor \Lambda_0) + \Bild(\delta_M)$ by induction on $t$. For $t=0$ this is trivial. Let $t \ge 1$. Then
	$x_t \in M \tensor \Lambda_+$ and $x - \delta_M (x_t) = \sum_{r=0}^{t-1} x_r '$. So we are done by induction. If
	$x \in \Bild(\delta_M) \cap M \tensor \Lambda_0$, then there is a $y \in M \tensor \Lambda_+$ such that $\delta_M (y) =x \in M \tensor \Lambda_0$.
	But this is only the case if $y =0$ and therefore $x=0$.

	Now let $_{\Lambda_0} \Lambda_1$ be flat. Then $_{\Lambda_0}\Lambda$ is flat. If $N$ is any $\Lambda_0$-module,
	then $\pd_\Lambda N \tensor \Lambda \le \pd_{\Lambda_0} N$ since
	\[\Ext^n_\Lambda (N \tensor \Lambda, L) \cong \Ext^n_{\Lambda_0} (N, \Hom_{\Lambda}(\Lambda, L)) = \Ext^n_{\Lambda_0} (N, L)\] for any
	$L \in \MMod \Lambda$ by the lemma. Therefore, the second part follows since $\Lambda_+ = \Lambda_1 \tensor \Lambda$.
\end{proof}

Let $R$ be any $k$-algebra of finite global dimension and let $M$, $N$ be two $R$-modules. The Euler form of $M$ with $N$ is defined by
\[
\euf{M,N}_R := \sum_{i=0}^\infty \dim_k \Ext^i(M,N). 
\]
Ringel showed that for a quiver $Q$ and two $k$-representations $M$ and $N$ we have that
\[
\euf{M,N}_Q = \euf{M,N}_{kQ}.
\]

Now let $Q$ be a quiver and $\Lambda_0 = R^{Q_0}$ for a fixed ring $R$. Let $\Lambda_1 := R^{Q_1}$ be the free $\Lambda_0$-bimodule
given by the arrows. The tensor ring $T(\Lambda_0, \Lambda_1)$ is then equal to $RQ$. Note that $\{\epsilon_i\}_{i\in Q_0}$ is
an orthogonal, complete set of idempotents. As before, for an $RQ$-module $M$ we denote
$M\epsilon_i$ by
$M_i$. As $R$-modules we have that $M = \bigoplus M_i$.
\begin{theorem}
	\label{euf_quiverquiver}
	Let $R$ be a $k$-algebra with $\gldim R = n < \infty$. Let $M$ and $N$ be two finite dimensional modules over $RQ$. Then the Euler form is given by
	\[ \euf{M,N}_{RQ} = \sum_{i\in Q_0} \euf{M_i, N_i}_R - \sum_{\alpha\colon i \rightarrow j} \euf{M_i, N_j}_R. \]
\end{theorem}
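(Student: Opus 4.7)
The plan is to apply the short exact sequence from the preceding theorem with $\Lambda_0 = R^{Q_0}$ and $\Lambda_1 = R^{Q_1}$, so that the resulting tensor ring is exactly $\Lambda = RQ$. Combined with the adjunction lemma, this will let me reduce $\Ext$-computations over $RQ$ to $\Ext$-computations over $\Lambda_0$, which split naturally along the orthogonal idempotents $\epsilon_i$.

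First I would write down the short exact sequence
\[
0 \to M \tensor_{\Lambda_0} \Lambda_+ \to M \tensor_{\Lambda_0} \Lambda \to M \to 0
\]
from the previous theorem. Since $_{\Lambda_0}\Lambda_1$ is free, hence flat, the theorem moreover guarantees that the two left-hand modules have $RQ$-projective dimension bounded by $\gldim \Lambda_0 = n$, so $\pd_{RQ} M \le n+1$ and every Euler form appearing below is finite. The long exact $\Ext$-sequence associated to $\Hom_{RQ}(-, N)$ then yields the additivity
\[
\euf{M,N}_{RQ} = \euf{M \tensor_{\Lambda_0} \Lambda, N}_{RQ} - \euf{M \tensor_{\Lambda_0} \Lambda_+, N}_{RQ}.
\]

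Next I would apply the preceding lemma with the bimodule $B = \Lambda$ viewed as a $\Lambda_0$-$\Lambda$-bimodule: it is free as a right $\Lambda$-module (trivially) and free, hence flat, as a left $\Lambda_0$-module by the construction of the tensor ring. Since $\Hom_\Lambda(\Lambda, N) \cong N$ canonically as right $\Lambda_0$-modules, the lemma gives
\[
\Ext^i_{RQ}(X \tensor_{\Lambda_0} \Lambda, N) \cong \Ext^i_{\Lambda_0}(X, N)
\]
for every right $\Lambda_0$-module $X$. Using $\Lambda_+ = \Lambda_1 \tensor_{\Lambda_0} \Lambda$, I would apply this both to $X = M$ and to $X = M \tensor_{\Lambda_0} \Lambda_1$.

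Finally, since $\Lambda_0 = \prod_{i \in Q_0} R\epsilon_i$ is a product of rings, any right $\Lambda_0$-module $X$ decomposes as $X = \bigoplus_i X\epsilon_i$ with $X\epsilon_i$ a right $R$-module, and consequently $\Ext^i_{\Lambda_0}(X, Y) = \bigoplus_i \Ext^i_R(X\epsilon_i, Y\epsilon_i)$. For $X = M$ this turns the first summand into $\sum_{i \in Q_0} \euf{M_i, N_i}_R$. For the second summand I need to identify the $j$-component of $M \tensor_{\Lambda_0} \Lambda_1$: writing $\Lambda_1 = \bigoplus_{\alpha \in Q_1} R\alpha$ with $\alpha = \epsilon_{s(\alpha)} \alpha \epsilon_{t(\alpha)}$, one obtains
\[
(M \tensor_{\Lambda_0} \Lambda_1)\epsilon_j = \bigoplus_{\alpha\colon i \to j} M_i,
\]
so the second summand equals $\sum_{\alpha\colon i \to j} \euf{M_i, N_j}_R$. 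Subtracting yields the claimed formula. The main obstacle is essentially bookkeeping: keeping the left and right $\Lambda_0$-actions on $\Lambda_1$ correctly straight so that the $j$-component collects precisely the arrows ending in $j$, with an $M_i$ factor attached at the source $i$.
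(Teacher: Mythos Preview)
Your proof is correct and follows essentially the same route as the paper: use the standard short exact sequence for the tensor ring, apply $\Hom_{RQ}(-,N)$ to obtain a long exact sequence whose terms past degree $n+1$ vanish, invoke the adjunction lemma to rewrite $\Ext^i_{RQ}(X \tensor_{\Lambda_0} \Lambda, N)$ as $\Ext^i_{\Lambda_0}(X,N)$, and finally split along the idempotents $\epsilon_i$. One tiny imprecision: $_{\Lambda_0}\Lambda_1$ and $_{\Lambda_0}\Lambda$ are projective (direct sums of the summands $\Lambda_0\epsilon_i$) rather than free, but this still gives flatness and so changes nothing in the argument.
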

\begin{proof}
	Let $\Lambda_0 = R^{Q_0}$. There is a natural $\Lambda_0$-bimodule structure on $\Lambda_1 = R^{Q_1}$.
    Let $\Lambda = T(\Lambda_0, \Lambda_1) = RQ$.
	Let $0 \rightarrow M \tensor_{\Lambda_0} \Lambda_1 \tensor_{\Lambda_0} \Lambda \rightarrow M \tensor_{\Lambda_0} \Lambda \rightarrow M \rightarrow 0$
	be the short exact sequence of the previous theorem. Apply $\Hom(-, N)$ to it and consider the long exact sequence
	\[
	\xymatrix@C=12pt{
		0 \ar[r] & \Hom_\Lambda(M, N) \ar[r] & \Hom_\Lambda(M \tensor_{\Lambda_0} \Lambda , N) \ar[r]&
		    \Hom_\Lambda(M \tensor_{\Lambda_0} \Lambda_1 \tensor_{\Lambda_0} \Lambda, N) \ar `r[d] `[lll] `[dlll] `[dll] [dll]\\
		 & \Ext^1_\Lambda(M, N) \ar[r] &\Ext^1_\Lambda(M \tensor_{\Lambda_0} \Lambda , N) \ar[r]&
		    \Ext^1_\Lambda(M \tensor_{\Lambda_0} \Lambda_1 \tensor_{\Lambda_0} \Lambda, N) \\
			&\cdots & &  \\
		 \ar[r]& \Ext^n_\Lambda(M, N) \ar[r] &\Ext^n_\Lambda(M \tensor_{\Lambda_0} \Lambda , N) \ar[r]&
		    \Ext^n_\Lambda(M \tensor_{\Lambda_0} \Lambda_1 \tensor_{\Lambda_0} \Lambda, N) \\
		\ar[r]& \Ext^{n+1}_\Lambda(M, N) \ar[r] & **[l]0.\qquad &}
	\]
    Here we obtain the last $0$ since the $\pd M \tensor \Lambda \le \gldim R$
    by the previous theorem.

    Now, since $ _{\Lambda_0} \Lambda_\Lambda$ is projective as a $\Lambda_0$-module and as a $\Lambda$-module, we obtain
	\[\Ext^i_\Lambda(M \tensor_{\Lambda_0} \Lambda , N) \cong \Ext^i_{\Lambda_0}(M,N) \cong \bigoplus_{i \in Q_0} \Ext^i_R(M_i, N_i)\]
	and
	\[\Ext^i_{\Lambda}(M \tensor_{\Lambda_0} \Lambda_1 \tensor_{\Lambda_0} \Lambda, N) \cong \Ext^i_{\Lambda_0} (M \tensor_{\Lambda_0} \Lambda_1, N) \cong
	\bigoplus _{\alpha \colon i \rightarrow j} \Ext^i_R(M_i, N_j). \]
	This yields the claim.
\end{proof}
\begin{remark}
    Note that, for a field $k$, we recover Ringel's result, namely that
    \[
    \sum_{i\in Q_0} \dim M_i \dim N_i - \sum_{\alpha\colon i \rightarrow j} \dim M_i \dim N_j = [M,N]_{kQ} - [M,N]^1_{kQ}
    \]
    for two $k$-representations $M$ and $N$ of a quiver $Q$.
\end{remark}
\section{Geometry of Quiver Flags}
Now we come back to quiver flags. Let $k$ be a field.
We can consider $\Rep[Q]{\dvec{d}}$ as an affine scheme over $k$ with the obvious functor of points. More
generally, we work in the category of schemes over $k$.
Fix a filtration
$\seqv{\dvec{d}}=(\dvec{d}^0 = 0\le \dvec{d}^1 \le \dots \le \dvec{d}^\nu)$.

Denote by $A_{\nu+1}$ the quiver having $\nu+1$ vertices numbered from $0$ to $\nu$ and arrows going from vertex $i$ to vertex $i+1$
for $0 \le i < \nu$. Let $\Lambda := (kQ) A_{\nu+1}$. Then $\Mod \Lambda$ is the category of sequences of $k$-representations of
$Q$ of length $\nu+1$ and
chain maps between them, i.e. a morphism between two modules
\[
\boldsymbol{M}= M^0 \rightarrow M^1 \rightarrow \dots \rightarrow M^\nu
\]
and
\[
\boldsymbol{N}= N^0 \rightarrow N^1 \rightarrow \dots \rightarrow N^\nu
\]
is given by a commutative diagram
\[
\begin{CD}
    M^0 @>>> M^1 @>>> \cdots @>>> M^\nu\\
     @VVV   @VVV    @.          @VVV\\
    N^0 @>>> N^1 @>>> \cdots @>>> N^\nu.
\end{CD}
\]
For $\Lambda$ it is easy to calculate the Euler form of two modules $\seqv{M}, \seqv{N} \in \Mod \Lambda$ by using
theorem $\ref{euf_quiverquiver}$:
\[ \euf{\seqv{M},\seqv{N}}_\Lambda = \sum_{i=0}^r \euf{M^i,N^i}_{kQ} - \sum_{i=0}^{r-1} \euf{M^i, N^{i+1}}_{kQ}. \]

We show that $\Gr[\Lambda]{\seqv{\dvec{d}}}{\seqv{M}} \cong \Fl[Q]{\seqv{\dvec{d}}}{M}$, where
$\seqv{M} = (M=M=\dots=M)$, and then use the previous
results to calculate the tangent space.
\begin{lemma}
    Let $\flvec{d}$ be a filtration and
    $M \in \Rep[Q]{\dvec{d}^\nu}(k)$.
    Let
    \[
    \seqv{U} = (U^0, U^1, \dots, U^\nu) \in \Fl[Q]{\seqv{\dvec{d}}}{M}(K)
    \]
    for a field extension
    $K$ of $k$. Then we have that
    \[
T_\seqv{U} \Fl[Q]{\seqv{\dvec{d}}}{M} \cong \Hom_{\Lambda\tensor K}(\seqv{U}, (\seqv{M}\tensor K)/\seqv{U}),
    \]
    where $\Lambda = (kQ)A_{\nu + 1}$ and $\seqv{M} = (M=M=\dots=M)
    \in \MMod_\Lambda (\dvec{d}^\nu,\dvec{d}^\nu, \dots, \dvec{d}^\nu)(k)$.
\end{lemma}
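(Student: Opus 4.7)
The plan is to identify $\Fl[Q]{\flvec{d}}{M}$ with the module Grassmannian $\Gr[\Lambda]{\flvec{d}}{\seqv{M}}$ and then invoke the tangent space lemma from the previous section on that Grassmannian. The chain-module structure on $\seqv{M}$, where every structure map is the identity on $M$, is exactly what encodes the filtration condition in a single module-theoretic datum.

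First I would unravel the functors of points. For a commutative $k$-algebra $R$, a point of $\Fl[Q]{\flvec{d}}{M}(R)$ is by definition a tuple $(U^0,\dots,U^\nu)$ of $kQ\tensor_k R$-submodules of $M\tensor_k R$, with each $U^i$ an $R$-summand of the appropriate rank, satisfying $U^i\subset U^{i+1}$. On the other side, a $\Lambda\tensor_k R$-submodule of $\seqv{M}\tensor_k R$ of dimension vector $\flvec{d}$ (with respect to the obvious idempotents $\epsilon_i^{(j)}$ of $\Lambda$) is precisely a family $(U^0,\dots,U^\nu)$ of $kQ\tensor_k R$-summands of $M\tensor_k R$ of the prescribed dimensions such that each structure map, being the identity on $M\tensor_k R$, restricts to a map $U^i\to U^{i+1}$, i.e.\ $U^i\subset U^{i+1}$. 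These two descriptions coincide and the identification is natural in $R$, so by Yoneda we obtain an isomorphism of schemes
\[
\Fl[Q]{\flvec{d}}{M}\;\cong\;\Gr[\Lambda]{\flvec{d}}{\seqv{M}}.
\]

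Next, I would apply the earlier tangent space lemma to the right-hand side. That lemma (proved by a $k[\varepsilon]$-valued point calculation in the module category of any finitely generated $k$-algebra, applied here to $\Lambda$) yields, for $\seqv{U}\in\Gr[\Lambda]{\flvec{d}}{\seqv{M}}(K)$,
\[
T_{\seqv{U}}\Gr[\Lambda]{\flvec{d}}{\seqv{M}}\;\cong\;\Hom_{\Lambda\tensor_k K}\bigl(\seqv{U},(\seqv{M}\tensor_k K)/\seqv{U}\bigr).
\]
Composing with the isomorphism of Step~1 (which is base-change compatible and in particular identifies tangent spaces) gives the stated formula.

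The only real obstacle is the functorial identification in Step~1. The bookkeeping is straightforward but requires checking three things: that the summand condition transports correctly (it does, componentwise, because $\seqv{M}$ is a direct sum of copies of $M$ as a $\Lambda_0$-module); that the dimension vectors line up via the idempotents of $\Lambda$; and that the inclusion condition $U^i\subset U^{i+1}$ is equivalent to $\Lambda$-stability of $\seqv{U}$, which holds precisely because the structure maps of $\seqv{M}$ are identities. Once this is in place, the tangent space computation is an immediate consequence of the previously proved lemma.
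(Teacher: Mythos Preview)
Your proposal is correct and follows essentially the same approach as the paper: identify $\Fl[Q]{\flvec{d}}{M}$ with the module Grassmannian $\Gr[\Lambda]{\flvec{d}}{\seqv{M}}$ via the functor of points (the key observation being that the structure maps of $\seqv{M}$ are identities, forcing the submodule maps $U^i\to U^{i+1}$ to be inclusions), and then invoke the earlier tangent space lemma. The only point the paper makes explicit that you leave implicit is that the earlier lemma is stated for $\Gr[\Lambda]{d}{\seqv{M}}$ with a single integer $d$, so one passes through the fact that $\Gr[\Lambda]{\flvec{d}}{\seqv{M}}$ is open (and closed) in $\Gr[\Lambda]{d}{\seqv{M}}$ for $d=\sum_{k,i}d_i^k$; this is harmless but worth a half-line.
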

\begin{proof}
    For a submodule
    \[
    \seqv{U}=(U^0 \rightarrow U^1 \rightarrow\dots\rightarrow U^\nu) \in \Gr[\Lambda]{\seqv{\dvec{d}}}{\seqv{M}}(R)
    \]
    we have automatically that the maps $U^i \rightarrow U^{i+1}$ are injections. Therefore,
    such a submodule $\seqv{U}$ gives, in a natural way, rise to a flag
    $\seqv{U} \in \Fl[Q]{\seqv{\dvec{d}}}{M}(R)$ and vice versa. This yields an isomorphism
    $\Gr[\Lambda]{\seqv{\dvec{d}}}{\seqv{M}} \cong \Fl[Q]{\seqv{\dvec{d}}}{M}$. Since
    $\Gr[\Lambda]{\seqv{\dvec{d}}}{\seqv{M}}$ is open in
    $\Gr[\Lambda]{d}{\seqv{M}}$, where $d = \sum_{k,i} d^k_i$, we have that, for a point
    $\seqv{U} \in \Fl[Q]{\seqv{\dvec{d}}}{M}(K)$,
    \[ T_\seqv{U} \Fl[Q]{\seqv{\dvec{d}}}{M} \cong T_\seqv{U} \Gr[\Lambda]{\seqv{\dvec{d}}}{\seqv{M}}
    = T_\seqv{U} \Gr[\Lambda]{d}{\seqv{M}} = \Hom_{\Lambda\tensor K}(\seqv{U}, (\seqv{M}\tensor K)/\seqv{U}). \]
\end{proof}


We define the closed subscheme $\RepFl[Q]{\seqv{\dvec{d}}}$ of
$\Rep[Q]{\dvec{d}^\nu} \times \OFl(\seqv{\dvec{d}})$ by its functor of points
\[ \RepFl[Q]{\seqv{\dvec{d}}}(R) := \Set{ (M, \seqv{U}) \in
\Rep[Q]{\dvec{d}^\nu}(R) \times \OFl(\flvec{d})(R) | \seqv{U} \in \Fl[Q]{\seqv{\dvec{d}}}{M}}. \]

We have the following.

\begin{lemma}
    Let $\flvec{d}$ be a filtration.
Consider the two natural projections from the fibre product restricted to $\RepFl[Q]{\seqv{\dvec{d}}}$.
\[
\xymatrix{
\RepFl[Q]{\seqv{\dvec{d}}} \ar[r]^{\pi_1} \ar[d]_{\pi_2}& \Rep[Q]{\dvec{d}^\nu}\\
\OFl(\flvec{d})&
}
\]
    \label{lmm:geomflags:projtriv}
Then $\pi_1$ is projective and $\pi_2$ is a vector bundle of rank
\[
\sum_{k=1}^\nu \sum_{\alpha:i \rightarrow j} d^k_j (d_i^k - d_i^{k-1}).
\]
Therefore, $\RepFl[Q]{\seqv{\dvec{d}}}$ is smooth and irreducible of dimension
\[ \sum_{k=1}^{\nu-1} \euf{\dvec{d}^k, \dvec{d}^{k+1} - \dvec{d}^k}_Q + \dim \Rep[Q]{\dvec{d}^\nu}. \]
Finally, the (scheme-theoretic) image
$\mathcal{A}_\flvec{d}:=\pi_1(\RepFl[Q]{\seqv{\dvec{d}}})$ is a closed, irreducible
subvariety of $\Rep[Q]{\dvec{d}^\nu}$.
\end{lemma}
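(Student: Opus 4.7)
The plan is to analyze the two projections separately. For $\pi_1$, the scheme $\RepFl[Q]{\seqv{\dvec{d}}}$ is by construction a closed subscheme of $\Rep[Q]{\dvec{d}^\nu} \times \OFl(\flvec{d})$, so $\pi_1$ factors as a closed immersion followed by the projection $\Rep[Q]{\dvec{d}^\nu} \times \OFl(\flvec{d}) \to \Rep[Q]{\dvec{d}^\nu}$. The latter is projective because $\OFl(\flvec{d})$ is projective over $k$ by the first lemma of the geometric setup, and the composition of a closed immersion with a projective morphism is projective.

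For $\pi_2$, I would exhibit it as a geometric vector bundle by producing a Zariski-local trivialization. Locally on $\OFl(\flvec{d})$, using that each $\Sch{Inj}(\dvec{d}^k,\dvec{d}^\nu) \to \Gr{\dvec{d}^k}{\dvec{d}^\nu}$ is a Zariski-locally trivial principal $\GL_{\dvec{d}^k}$-bundle, I can pick a local section and identify the universal flag with a fixed flag $V^0 \subset V^1 \subset \cdots \subset V^\nu$ in $k^{\dvec{d}^\nu}$. Over such an open set $U \subset \OFl(\flvec{d})$ the preimage $\pi_2^{-1}(U)$ becomes $U \times L$, where $L \subset \Rep[Q]{\dvec{d}^\nu}$ is the linear subspace of tuples $(M_\alpha)_{\alpha:i \to j}$ satisfying $M_\alpha(V^k_i) \subset V^k_j$ for all $k$ and all $\alpha$. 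Choosing bases of $V^\nu_i$ adapted to the flag, each $M_\alpha$ becomes a block lower triangular matrix, and a direct count gives
\[
\dim L \;=\; \sum_{\alpha:i \to j}\;\sum_{m \le l}(d^l_i - d^{l-1}_i)(d^m_j - d^{m-1}_j) \;=\; \sum_{k=1}^\nu \sum_{\alpha:i\to j} d^k_j(d^k_i - d^{k-1}_i),
\]
the second equality coming from $\sum_{m\le l}(d^m_j - d^{m-1}_j) = d^l_j$. This is the claimed rank.

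Since $\OFl(\flvec{d})$ is smooth and irreducible over $k$ and $\pi_2$ is a vector bundle of constant rank, $\RepFl[Q]{\seqv{\dvec{d}}}$ is smooth and irreducible, with dimension $\dim \OFl(\flvec{d}) + \rank \pi_2$. Using the standard formula
\[
\dim \OFl(\flvec{d}) \;=\; \sum_{i \in Q_0}\sum_{1 \le m < l \le \nu}(d^l_i - d^{l-1}_i)(d^m_i - d^{m-1}_i)
\]
together with the expansion
\[
\euf{\dvec{d}^k,\dvec{d}^{k+1}-\dvec{d}^k}_Q \;=\; \sum_i d^k_i(d^{k+1}_i - d^k_i) - \sum_{\alpha:i\to j} d^k_i(d^{k+1}_j - d^k_j),
\]
the identity $\dim \OFl(\flvec{d}) + \rank \pi_2 = \sum_{k=1}^{\nu-1}\euf{\dvec{d}^k,\dvec{d}^{k+1}-\dvec{d}^k}_Q + \dim \Rep[Q]{\dvec{d}^\nu}$ becomes a routine manipulation of double sums (which I would confirm vertex-by-vertex and arrow-by-arrow).

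Finally, $\mathcal{A}_{\flvec{d}} = \pi_1(\RepFl[Q]{\seqv{\dvec{d}}})$ is closed in $\Rep[Q]{\dvec{d}^\nu}$ because $\pi_1$ is projective, hence proper, and it is irreducible as the continuous image of an irreducible scheme. The main technical obstacle is the vector bundle claim for $\pi_2$: one must assemble the Zariski-local trivializations from those of the component Grassmannian factors and verify that on such a trivializing open the fiber is truly a fixed linear subspace of $\Rep[Q]{\dvec{d}^\nu}$ of constant dimension. The remaining three assertions then follow formally.
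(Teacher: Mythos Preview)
Your proposal is correct and follows essentially the same approach as the paper: the paper also factors $\pi_1$ as a closed immersion followed by projection from a projective factor, trivializes $\pi_2$ over the standard open charts of $\OFl(\flvec{d})$ by using an element of $\GL_{\dvec{d}^\nu}$ to move the universal flag to a fixed coordinate flag (so that the fibre becomes the affine space of block lower-triangular representations), and then carries out the same double-sum manipulation to obtain the dimension formula. The only difference is cosmetic---the paper works in explicit matrix coordinates where you invoke the principal-bundle description---so the two arguments coincide.
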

\newcommand{\myunit}{1.5ex}
\tikzset{%
    node style ge/.style={rectangle,minimum size=\myunit}
}
\begin{proof}
    $\pi_1$ is projective since it factors as a closed immersion into projective space times
    $\ORep_{Q}$ followed by the projection to $\ORep_{Q}$.

    For $\dvec{I}=(I_i)_{i \in Q_0}$, each $I_i \subset \{1,\dots,d^\nu_i\}$,
    we set $W_\dvec{I}$ to be the
    graded subspace
    of $k^{\dvec{d}^\nu}$ with basis $\{e_j\}_{j \in I_i}$ in the $i$-th graded part $k^{d^\nu_i}$ and
    $|\dvec{I}| := (|I_i|)_{i \in Q_0} \in \N^{Q_0}$.
    For a sequence $\seqv{\dvec{I}} = (\dvec{I}^0, \dvec{I}^1, \dots, \dvec{I}^\nu)$ such that
    $I_i^k \supset I_i^{k+1}$ and $|\dvec{I}^k|= \dvec{d}^\nu - \dvec{d}^k$ we set
    \[
    \seqv{W}_{\seqv{\dvec{I}}} := (W_{\dvec{I}^0}, \dots, W_{\dvec{I}^\nu})
    \]
    to be the decreasing sequence of
    subspaces associated to $\seqv{\dvec{I}}$.
    We show that $\pi_2$ is trivial over the open affine subset $U_{\seqv{\dvec{I}}}$ of
    $\OFl(\seqv{\dvec{d}})$
    given by
    \[
    U_{\seqv{\dvec{I}}} (R) := \Set{ \seqv{U} \in \OFl(\flvec{d})(R) |
    U^k \oplus (W_{\dvec{I}^k} \tensor R) = R^{\dvec{d}^\nu}}.
    \]
    Without loss
    of generality we assume $I^k_i = \{d^k_i+1, \dots, d^\nu_i\}$. Each element
    $\seqv{U} \in U_\seqv{\dvec{I}}(R)$ is given uniquely by some matrices
    $A_i^k \in \Mat_{(d^\nu_i-d^k_i)\times (d^k_i-d^k_{i-1})}$
    such that
 \[U^k_i = \Bild
\begin{array}{c@{}}\begin{tikzpicture}
\matrix (A) [matrix of math nodes,%
             nodes = {node style ge},%
             left delimiter  = (,%
             right delimiter = )]
             {%
         \id_{d^1_i} & 0 & \cdots & 0\\
        \ & \id_{d^2_i-d^1_i} & \ddots & \vdots\\
        \ & \hphantom{AB} & \ddots &0\\
        \ & A^2_i & \ddots & \id_{d^{k}_i-d^{k-1}_i}\\
        \hphantom{AB} & \hphantom{AB} & \cdots & A^k_i\\
         };
         \path ($ (A-2-1)!0.5!(A-4-1) $) node {$A^1_i$};
         \draw (A-1-1.south west) rectangle (A-5-1.south east);
         \draw ($ (A-2-2.south west)!0.3!(A-2-2.south) $) rectangle (A-5-2.south east);
         \draw (A-5-4.north west) rectangle (A-5-4.south east);
     \end{tikzpicture}\end{array}.\]

      Let $V^k:=W_{(\{1,\dots,d^k_i\})}$. Let $X$ be the closed subscheme of
      $\Rep[Q]{\dvec{d}^\nu}$
      given by the functor of points
      \[X(R) := \Set{ M \in \Rep[Q]{\dvec{d}^\nu} | V^k \tensor R \text{ is a subrepresentation of } M 
      \; \forall \; 0 \le k \le \nu}.\]
      Note that $X$ is an affine space of dimension
      \[\sum_{k=1}^\nu \sum_{\alpha:i \rightarrow j} d^k_j (d_i^k - d_i^{k-1}).\]
      Let $g_\seqv{U}:= (g_i)_{i\in Q_0}$ where
    \[g_i =
\begin{array}{c@{}}
\begin{tikzpicture}
\matrix (A) [matrix of math nodes,%
             nodes = {node style ge},%
             left delimiter  = (,%
             right delimiter = )]
             {%
      \id_{d^1_i} & 0 & \cdots & 0 & 0\\
      \hphantom{AB} & \id_{d^2_i-d^1_i} & \ddots & \vdots & 0\\
       & \hphantom{AB}& \ddots & 0 & \vdots\\
      \hphantom{AB} & A^2_i & \ddots & \id_{d^{\nu-1}_i-d^{\nu-2}_i} & 0\\
      \hphantom{AB} &\hphantom{AB}  & \cdots & A^{\nu-1}_i & \id_{d^{\nu}_i-d^{\nu-1}_i}\\
      };
         \path ($ (A-2-1)!0.5!(A-4-1) $) node {$A^1_i$};
         \draw (A-1-1.south west) rectangle (A-5-1.south east);
         \draw ($ (A-2-2.south west)!0.3!(A-2-2.south) $) rectangle (A-5-2.south east);
         \draw (A-5-4.north west) rectangle (A-5-4.south east);
  \end{tikzpicture}
  \end{array}
      \in \GL_{d^\nu_i}(k).\]
      Then, the map from $X \times U_{\seqv{\dvec{I}}}$ to
      $U_{\seqv{\dvec{I}}} \times_{\OFl} \RepFl[Q]{\flvec{d}}$ given by sending
      $(M, \seqv{U})$ to $(g_\seqv{U} \cdot M, \seqv{U})$ is an isomorphism which induces
      an isomorphism of vector spaces on the fibres.
      Therefore, we have that $\pi_2$ is a vector bundle.

      Finally, we prove the claim on dimension. Since $\OFl(\flvec{d})$ is smooth,
      we have that
      \begin{multline*}
          \dim \OFl(\flvec{d}) =
          \sum_{i \in Q_0} \sum_{k=1}^{\nu-1} \sum_{l=k+1}^\nu (d^k_i - d^{k-1}_{i}) (d^l_i - d^{l-1}_{i})\\
          =\sum_{i \in Q_0} \left( \sum_{k=1}^{\nu-1} \sum_{l=k+1}^\nu d^k_i (d^l_i - d^{l-1}_{i}) -
          \sum_{k=1}^{\nu-2} \sum_{l=k+2}^\nu d^{k}_{i} (d^l_i - d^{l-1}_{i}) \right)\\
          =\sum_{i \in Q_0} \left( d^{\nu-1} (d^\nu_i - d^{\nu-1}_i) + \sum_{k=1}^{\nu-2} d^k_i (d^{k+1}_i - d^k_i) \right) =
          \sum_{i \in Q_0} \left(\sum_{k=1}^{\nu-1} d^k_i (d^{k+1}_i - d^k_i) \right).
      \end{multline*}
      Since
      $\RepFl[Q]{\flvec{d}}$ is smooth and $\pi_2$ is a vector bundle we obtain
      \begin{multline*}
        \dim \RepFl[Q]{\flvec{d}} = \dim \OFl(\flvec{d}) +
        \sum_{\alpha \colon i \rightarrow j}\sum_{k=1}^\nu (d^k_i- d^{k-1}_{i}) d^{k}_j\\
          =\sum_{i \in Q_0} \left(\sum_{k=1}^{\nu-1} d^k_i (d^{k+1}_i - d^k_i) \right) +
          \sum_{\alpha \colon i \rightarrow j}\sum_{k=1}^\nu (d^k_i- d^{k-1}_{i}) d^{k}_j\\
          =\sum_{k=1}^{\nu-1} \left( \sum_{i \in Q_0}d^k_i (d^{k+1}_i - d^k_i)  +
          \sum_{\alpha \colon i \rightarrow j} d^k_i (d^{k}_j-d^{k+1}_j)\right)
          + \sum_{\alpha \colon i \rightarrow j} d^\nu_i d^\nu_j\\
      =\sum_{k=1}^{\nu -1} \euf{\dvec{d}^k, \dvec{d}^{k+1} - \dvec{d}^k}_Q + \dim \Rep[Q]{\dvec{d}^\nu}.
  \end{multline*}
\end{proof}
\begin{remark}
    Note that if $M$ is a $k$-valued point of $\mathcal{A}_\flvec{d}$ for $k$ not algebraically closed,
    then $M$ does not necessarily have a flag of type $\flvec{d}$. This only becomes true after a finite
    field extension.
\end{remark}
We now can give an estimate for the codimension of $\mathcal{A}_\flvec{d}$ in $\Rep[Q]{\dvec{d}^\nu}$.
For this we use Chevalley's theorem.
\begin{theorem}[Chevalley]
    Let $k$ be a field, $X, Y$ irreducible $k$-schemes and $f \colon X \rightarrow Y$ a dominant morphism.
    Then for every point $y \in Y$ and every point $x \in f^{-1}(y)$, the scheme theoretic fibre, we
    have that
    \[ \dim_x f^{-1}(y) \ge \dim X - \dim Y. \]
    Moreover, on an open, non-empty subset of $X$ we have equality.
\end{theorem}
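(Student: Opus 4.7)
The plan is to prove the two assertions separately and by standard commutative-algebraic means. The problem is local on $Y$ and on $X$ at $x$, so I would first reduce to the case where $Y = \Spec A$ is affine with $A$ a finitely generated $k$-algebra that is a domain (using irreducibility of $Y$) and where $X$ is replaced by an affine open neighbourhood of $x$ contained in $f^{-1}(Y)$. Dominance is preserved under this restriction, and the dimensions $\dim X$, $\dim Y$ are unchanged.

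For the lower bound, the tool is Krull's Hauptidealsatz. Set $r := \dim \mathcal{O}_{Y,y}$ and observe that $r \le \dim Y$; this uses that $Y$ is catenary, being of finite type over a field. Pick a system of parameters $t_1,\dots,t_r$ in the maximal ideal of $\mathcal{O}_{Y,y}$, and after clearing denominators realise them as elements of $A$ in an affine neighbourhood of $y$ small enough so that $y$ is an isolated point of $V(t_1,\dots,t_r)$. Pulling back along $f$, the scheme-theoretic fibre $f^{-1}(y)$ agrees near $x$ with the vanishing locus of the $r$ functions $f^\# t_1,\dots,f^\# t_r$ on an open of $X$ around $x$. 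Iterated Krull then says every irreducible component of this vanishing locus passing through $x$ has codimension at most $r$ in $X$, hence dimension at least $\dim X - r \ge \dim X - \dim Y$. Taking the component through $x$ gives $\dim_x f^{-1}(y) \ge \dim X - \dim Y$.

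For the generic equality, the cleanest route is generic flatness: by a theorem of Grothendieck there is a non-empty open $U \subseteq Y$ such that $f^{-1}(U) \to U$ is flat, and for a flat morphism of finite type between irreducible noetherian schemes every fibre over $U$ is equidimensional of dimension $\dim X - \dim Y$. The preimage $f^{-1}(U)$ is then the desired non-empty open subset of $X$ on which equality holds. One may alternatively verify this directly using the transcendence degree formula $\dim X - \dim Y = \mathrm{tr.deg}_{K(Y)} K(X)$ together with a generic choice of transcendence basis, but the flatness argument is shorter.

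The main obstacle is really only in the lower bound: one must make sure that the system of parameters in $\mathcal{O}_{Y,y}$ actually extends to $r$ regular functions on an affine open of $Y$ that cut out $y$ locally, and that their pullbacks cut out $f^{-1}(y)$ scheme-theoretically near $x$. The subtlety that $y$ need not be a closed point (so that $\dim \mathcal{O}_{Y,y}$ may be strictly less than $\dim Y$) is harmless since the inequality $r \le \dim Y$ only helps us, and the possible non-reducedness of the scheme-theoretic fibre does not affect the topological dimension being computed.
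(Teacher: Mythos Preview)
Your proposal is essentially correct and follows the standard textbook route (Krull's principal ideal theorem for the lower bound, generic flatness for the open-set equality). The paper itself does not prove this statement at all: it simply refers the reader to \cite[\S 5, Proposition 5.6.5]{EGA4}. So there is nothing to compare at the level of arguments; you have supplied a proof where the paper only supplies a citation.

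A couple of minor points worth tightening. First, a system of parameters $t_1,\dots,t_r$ in $\mathcal{O}_{Y,y}$ generates an $\mathfrak{m}_y$-primary ideal, not $\mathfrak{m}_y$ itself, so the scheme-theoretic fibre need not literally coincide with $V(f^\#t_1,\dots,f^\#t_r)$ near $x$; rather, the two have the same underlying closed set there. Since local dimension is a topological invariant this is harmless, and you do flag non-reducedness as irrelevant at the end, but the phrasing ``agrees near $x$'' is slightly imprecise. Second, you silently assume $X$ and $Y$ are of finite type over $k$ (needed for catenarity, for $A$ finitely generated, and for generic flatness in the form you quote). In the paper's context everything is a variety, so this is fine, but it is worth stating the hypothesis explicitly since the theorem as written only says ``irreducible $k$-schemes''.
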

\begin{proof}
    See \cite[\S 5, Proposition 5.6.5]{EGA4}.
\end{proof}
\begin{theorem}
  Let $\flvec{d}$ be a filtration, $K$ a field extension of $k$, $\Lambda := (KQ)A_{\nu+1}$ and
  $(M,\seqv{U}) \in \RepFl[Q]{\flvec{d}}(K)$. Let
  $\seqv{M}=(M=\dots=M)$ as a $\Lambda$-module. Then we have that
  \[ \codim \mathcal{A}_\flvec{d} \le \dim \Ext^1_{\Lambda}(\seqv{U}, \seqv{M}/\seqv{U})
  \le \dim \Ext^1_{\Lambda}(\seqv{U}, \seqv{M})
  \le \dim \Ext^1_{KQ}(M, M). \]
  \label{them:codimflag}
\end{theorem}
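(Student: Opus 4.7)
My plan is to chain the three inequalities by combining Chevalley's theorem with the tangent space formula, a telescoping Euler form computation, and two homological vanishings coming from the fact that $\seqv{M}$ is a particularly well-behaved $\Lambda$-module.

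For the first inequality I would apply Chevalley's theorem to the projective morphism $\pi_1 \colon \RepFl[Q]{\flvec{d}} \to \Rep[Q]{\dvec{d}^\nu}$, whose image is $\mathcal{A}_{\flvec{d}}$ and whose fibre over $M$ is $\Fl[Q]{\flvec{d}}{M}$. Together with the dimension formula of Lemma~\ref{lmm:geomflags:projtriv} and the tangent space bound $\dim \Fl[Q]{\flvec{d}}{M} \le [\seqv{U},\seqv{M}/\seqv{U}]_\Lambda$ from the previous lemma, this yields
\[ \codim \mathcal{A}_{\flvec{d}} \le [\seqv{U},\seqv{M}/\seqv{U}]_\Lambda - \sum_{k=1}^{\nu-1} \euf{\dvec{d}^k, \dvec{d}^{k+1} - \dvec{d}^k}_Q. \]
A telescoping argument using Theorem~\ref{euf_quiverquiver} together with the short exact sequences $0 \to U^{k+1}/U^k \to M/U^k \to M/U^{k+1} \to 0$ rewrites the subtracted sum as $\euf{\seqv{U}, \seqv{M}/\seqv{U}}_\Lambda$. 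Since $\gldim \Lambda \le 2$ (by the tensor ring theorem), the difference $[\seqv{U}, \seqv{M}/\seqv{U}]_\Lambda - \euf{\seqv{U}, \seqv{M}/\seqv{U}}_\Lambda$ equals $[\seqv{U}, \seqv{M}/\seqv{U}]^1_\Lambda - [\seqv{U}, \seqv{M}/\seqv{U}]^2_\Lambda$, so dropping the non-negative $\Ext^2$ term produces the first inequality.

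The key observation for the remaining two inequalities is that $\seqv{M} \cong M \tensor_{\Lambda_0} \Lambda$, with $M$ placed at vertex $0$ of $A_{\nu+1}$ inside $\Lambda_0 = (KQ)^{\nu+1}$; so by the tensor ring theorem $\pd_\Lambda \seqv{M} \le \gldim KQ \le 1$. Dually, the constant-sequence functor $V \mapsto \hat V = (V=\dots=V)$ is exact and, via the natural isomorphism $\Hom_\Lambda(\seqv{X}, \hat V) \cong \Hom_{KQ}(X^\nu, V)$, right adjoint to the exact evaluation at vertex $\nu$, hence preserves injectives. Applying it to an injective resolution $0 \to M \to I^0 \to I^1 \to 0$ in $\repK{Q}{K}$ therefore gives an injective resolution of $\seqv{M}$ of length at most one. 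Thus $\Ext^2_\Lambda(\seqv{M}, -) = 0 = \Ext^2_\Lambda(-, \seqv{M})$.

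For the third inequality a further telescoping shows $\euf{\seqv{U}, \seqv{M}}_\Lambda = \euf{M, M}_{KQ}$, and $[\seqv{U}, \seqv{M}]_\Lambda = [M, M]_{KQ}$ because chain maps into a constant sequence are determined by their top component; combined with $\Ext^2_\Lambda(\seqv{U}, \seqv{M}) = 0$ this in fact forces the equality $[\seqv{U}, \seqv{M}]^1_\Lambda = [M, M]^1_{KQ}$. For the second inequality I would apply $\Hom_\Lambda(-, \seqv{U})$ to $0 \to \seqv{U} \to \seqv{M} \to \seqv{M}/\seqv{U} \to 0$ and sandwich $\Ext^2_\Lambda(\seqv{U}, \seqv{U})$ between $\Ext^2_\Lambda(\seqv{M}, \seqv{U}) = 0$ and $\Ext^3_\Lambda(\seqv{M}/\seqv{U}, \seqv{U}) = 0$ to conclude $\Ext^2_\Lambda(\seqv{U}, \seqv{U}) = 0$; then applying $\Hom_\Lambda(\seqv{U}, -)$ to the same sequence makes $\Ext^1_\Lambda(\seqv{U}, \seqv{M}) \to \Ext^1_\Lambda(\seqv{U}, \seqv{M}/\seqv{U})$ surjective, which is the required inequality. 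The main obstacle is the injective-dimension bound: the projective side is immediate from the tensor ring theorem, but the injective side requires spotting the adjunction between evaluation at vertex $\nu$ and the constant-sequence functor.
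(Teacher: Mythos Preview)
Your argument is correct and follows essentially the same route as the paper: Chevalley plus the tangent-space lemma for the first inequality, the telescoping Euler-form identity $\euf{\seqv{U},\seqv{M}/\seqv{U}}_\Lambda = \sum_k \euf{\dvec{d}^k,\dvec{d}^{k+1}-\dvec{d}^k}_Q$, and then the two long exact sequences fed by $\pd_\Lambda \seqv{M}\le 1$ and $\id_\Lambda \seqv{M}\le 1$. Your justifications for these last two bounds (induction $M\mapsto M\tensor_{\Lambda_0}\Lambda$ for the projective side, and the adjunction $\Hom_\Lambda(\seqv{X},\hat V)\cong\Hom_{KQ}(X^\nu,V)$ for the injective side) are more explicit than the paper's one-line ``$P=\dots=P$ is projective, and similarly for injectives,'' and your Euler-form computation for the third step in fact yields the equality $[\seqv{U},\seqv{M}]^1_\Lambda=[M,M]^1_{KQ}$ rather than merely the inequality the paper records---but the overall architecture is the same.
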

\begin{proof}
  Since $\dim \mathcal{A}_{\flvec{d}}$ is stable under flat base change we can
  assume $k=K$.
  Let $\seqv{V} := \seqv{M}/\seqv{U}$.
  Then we have the following short exact sequence of $\Lambda$-modules:
  \[
  \begin{CD}
    0:@.\quad           @.  0   @>>>  0  @>>> \cdots @>>> 0\\
    @VVV @.  @VVV      @VVV         @.       @VVV\\
    \seqv{U}:@.\quad   @.  U^0 @>>> U^1 @>>> \cdots @>>> U^\nu\\
    @VVV @.  @VVV      @VVV         @.       @VVV\\
    \seqv{M}:@.\quad   @.   M @=     M  @=   \cdots @=    M\\
    @VVV @.  @VVV      @VVV         @.       @VVV\\
    \seqv{V}:@.\quad   @.  V^0 @>>> V^1 @>>> \cdots @>>> V^\nu\\
    @VVV @.  @VVV      @VVV         @.       @VVV\\
    0:@.\quad         @.    0   @>>>  0  @>>> \cdots @>>> 0.\\
  \end{CD}
  \]
  We already know that $\Hom_\Lambda(\seqv{U}, \seqv{V})$
  is the tangent space of $\Fl{\flvec{d}}{M}$ at the point $\seqv{U}$.
  Using Chevalley's theorem, we have that
  \[\dim \Hom_\Lambda(\seqv{U}, \seqv{V}) \ge \dim_\seqv{U}\Fl{\flvec{d}}{M} \ge
  \dim \RepFl[Q]{\flvec{d}} - \dim \mathcal{A}_\flvec{d} \]
  and therefore
  \[ \dim \mathcal{A}_\flvec{d} \ge \dim \RepFl[Q]{\flvec{d}} - \dim \Hom_\Lambda(\seqv{U}, \seqv{V}).\]
  We now calculate
  \begin{multline*}
    \euf{\seqv{U}, \seqv{V}}_\Lambda = \sum_{k=0}^\nu \euf{U^k, V^k}_{Q} - \sum_{k=0}^{\nu-1} \euf{U^k, V^{k+1}}_{Q}\\
    = \sum_{k=1}^{\nu-1} \euf{\dvec{d}^k, \dvec{d}^\nu - \dvec{d}^k}_{Q} - \sum_{k=1}^{\nu-1}
    \euf{\dvec{d}^k, \dvec{d}^\nu - \dvec{d}^{k+1}}_{Q}
    = \sum_{k=1}^{\nu-1} \euf{\dvec{d}^k, \dvec{d}^{k+1} - \dvec{d}^k}_{Q}.
  \end{multline*}
  Recall that
\[ \dim \RepFl[Q]{\seqv{\dvec{d}}} = \sum_{k=1}^{\nu-1}
\euf{\dvec{d}^k, \dvec{d}^{k+1} - \dvec{d}^k}_Q + \dim \Rep[Q]{\dvec{d}^\nu}. \]
  In total
  \begin{multline*}
    \codim \mathcal{A}_\flvec{d} \le \dim \Rep{\dvec{d}} + \dim \Hom_\Lambda (\seqv{U}, \seqv{V})
    - \dim \RepFl[Q]{\flvec{d}}\\
    = \dim \Hom_\Lambda (\seqv{U}, \seqv{V}) - \sum_{k=1}^{\nu-1} \euf{\dvec{d}^k, \dvec{d}^{k+1} - \dvec{d}^k}_{Q}
    =\dim \Hom_\Lambda (\seqv{U}, \seqv{V}) - \bform{\seqv{U}}{\seqv{V}}_\Lambda\\
    = \dim \Ext^1_\Lambda(\seqv{U}, \seqv{V}) - \dim \Ext^2_\Lambda(\seqv{U}, \seqv{V}).
  \end{multline*}
  Here we have the last equality since $\gldim \Lambda \le 2$.
  Since $\gldim kQ = 1$ and $P = P = \dots = P$ is projective in $\Mod \Lambda$
  for every projective $P$ in $\Mod kQ$ we see that
  $\pd_\Lambda \seqv{M} \le 1$ and similarly $\id_\Lambda \seqv{M} \le 1$. Consider, as before, the short exact sequence
  \[ 0 \rightarrow \seqv{U} \rightarrow \seqv{M} \rightarrow \seqv{V} \rightarrow 0. \]
  Since $\seqv{M}$ has projective dimension less than two we have that
  $(\seqv{U}, \seqv{U})^2=0$ and that $(\seqv{U}, \seqv{V})^2 = 0$.
  Applying $(-,\seqv{M})$ gives
  a surjection $(\seqv{M},\seqv{M})^1 \rightarrow (\seqv{U}, \seqv{M})^1$.
  Applying $(\seqv{U},-)$ yields a surjection $(\seqv{U}, \seqv{M})^1 \rightarrow (\seqv{U}, \seqv{V})^1$.
  Hence the above result simplifies to
  \[ \codim \mathcal{A}_\flvec{a} \le \dim \Ext^1(\seqv{U}, \seqv{V}) \le \dim \Ext^1(\seqv{U}, \seqv{M})
  \le \dim \Ext^1(\seqv{M}, \seqv{M}). \]

  Obviously, $\Ext^1_\Lambda(\seqv{M}, \seqv{M}) \cong \Ext^1_{kQ}(M,M)$ and the claim follows.
\end{proof}
\begin{remark}
    Note that if the characteristic of $k$ is $0$, then, by generic smoothness,
    there is a point $M \in \mathcal{A}_\flvec{d}$ and an $\seqv{U} \in \Fl{\flvec{d}}{M}$ such
    that $\Fl{\flvec{d}}{M}$ is smooth in $\seqv{U}$ and
    the value of $\dim \Ext^1_{\Lambda \tensor K}(\seqv{U}, (M\tensor K)/\seqv{U})$ is minimal. In this case we have that
  \[ \codim \mathcal{A}_\flvec{d} = \dim \Ext^1_{\Lambda \tensor K}(\seqv{U}, (M\tensor K)/\seqv{U}).\]
\end{remark}

  We also construct an additional vector bundle.
  \begin{definition}
      Let $\flvec{d}$ be a filtration.
      Let $\Sch{Rep}_{Q,A_{\nu+1}}(\flvec{d})$ be the scheme given via its functor of points
      \begin{multline*}
          \Sch{Rep}_{Q,A_{\nu+1}}(\flvec{d})(R) := \\
          \Set{ (\seqv{U}, \seqv{f})  \in \prod_{i=0}^\nu \Rep[Q]{\dvec{d}^i}(R) \times
          \prod_{i=0}^{\nu-1} \Hom(\dvec{d}^i, \dvec{d}^{i+1})(R) | f^i \in \Hom_{RQ}(U^i, U^{i+1})}.
      \end{multline*}
      Let $\Sch{IRep}_{Q,A_{\nu+1}}(\flvec{d})$ be the open subscheme of
      $\Sch{Rep}_{Q,A_{\nu+1}}(\flvec{d})$ given by its functor of points
      \[
      \Sch{IRep}_{Q,A_{\nu+1}}(\flvec{d})(R) := \Set{ (\seqv{U}, \seqv{f}) \in \Sch{Rep}_{Q,A_{\nu+1}}(\flvec{d})(R) |
      f^i \in \Sch{Inj}(\dvec{d}^i, \dvec{d}^{i+1})(R)}.
      \]
  \end{definition}
  \begin{remark}
      Note that $\Sch{Rep}_{Q, A_{\nu +1}}(\flvec{d})(k)$ consists of sequences
      of $k$-representations of $Q$. Therefore, these are modules over $(kQ)A_{\nu+1}$. Vice
      versa, every $(kQ)A_{\nu+1}$-module of dimension vector $\flvec{d}$ is isomorphic to an element of
      $\Sch{Rep}_{Q, A_{\nu +1}}(\flvec{d})(k)$.

      We will often write $\seqv{U}$ instead of $(\seqv{U}, \seqv{f})$ for
      an $(\seqv{U}, \seqv{f}) \in \Sch{Rep}_{Q, A_{\nu +1}}(\flvec{d})(R)$.
  \end{remark}
  \begin{lemma}
      Let $\flvec{d}$ be a filtration.
      Then the projection
      \begin{alignat*}{2}
          \pi&\colon\quad& \Sch{IRep}_{Q,A_{\nu+1}}(\flvec{d}) &\rightarrow
          \prod_{i=0}^{\nu-1} \Sch{Inj}(\dvec{d}^i, \dvec{d}^{i+1})\\
          \intertext{given by sending}
      &&(\seqv{U}, \seqv{f}) &\mapsto \seqv{f}
      \end{alignat*}
      is a vector bundle and therefore flat.
      In particular, $\Sch{IRep}_{Q,A_{\nu+1}}(\flvec{d})$ is smooth and irreducible.
  \end{lemma}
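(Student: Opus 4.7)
The plan is to exhibit $\pi$ as a Zariski-locally trivial vector bundle by solving the compatibility conditions explicitly once a local splitting of each $f^i$ is chosen. Smoothness and irreducibility then follow from those of the base.

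First, I observe that for a commutative $k$-algebra $R$ and $\seqv{f} = (f^0,\ldots,f^{\nu-1}) \in B(R)$, where $B := \prod_{i=0}^{\nu-1}\Sch{Inj}(\dvec{d}^i, \dvec{d}^{i+1})$, the fibre $\pi^{-1}(\seqv{f})(R)$ consists of tuples $\seqv{U} = (U^0, \ldots, U^\nu)$ with $U^i \in \Rep[Q]{\dvec{d}^i}(R)$ satisfying $f^i_{t(\alpha)} U^i_\alpha = U^{i+1}_\alpha f^i_{s(\alpha)}$ for every arrow $\alpha$ and every $0 \le i < \nu$. These conditions are linear in the matrix entries of the $U^j_\beta$, so the fibre is a linear subscheme of the affine space $\prod_i \Rep[Q]{\dvec{d}^i}$, and constant rank will follow from the trivialization below.

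Next, I cover $B$ by Zariski opens over which each $f^i$ admits a retraction $r^i$. This is possible because, by the lemma in Section~1, the projection $\Sch{Inj}(\dvec{d}^i, \dvec{d}^{i+1}) \to \Gr{\dvec{d}^i}{\dvec{d}^{i+1}}$ is a Zariski-locally trivial $\GL_{\dvec{d}^i}$-bundle, and the Grassmannian is covered by the standard affine charts attached to complementary coordinate subspaces $W_{\dvec{I}}$, over which projection onto the chosen complement gives a canonical retraction. Over the resulting product chart $V \subset B$, I have canonical decompositions $R^{\dvec{d}^{i+1}} = \Bild(f^i) \oplus \Ker(r^i)$, and identify $\Bild(f^i)$ with $R^{\dvec{d}^i}$ via $f^i$ and $\Ker(r^i)$ with $R^{\dvec{d}^{i+1} - \dvec{d}^i}$ via the chosen complement.

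With these identifications, the compatibility equation for an arrow $\alpha \colon s \to t$ forces
\[
U^{i+1}_\alpha = \begin{pmatrix} U^i_\alpha & B^i_\alpha \\ 0 & D^i_\alpha \end{pmatrix},
\]
where $B^i_\alpha \colon R^{d^{i+1}_s - d^i_s} \to R^{d^i_t}$ and $D^i_\alpha \colon R^{d^{i+1}_s - d^i_s} \to R^{d^{i+1}_t - d^i_t}$ are arbitrary. Iterating on $i$ shows that the fibre over any point of $V$ is parameterized freely by $U^0 \in \Rep[Q]{\dvec{d}^0}$ together with the tuples $(B^i_\alpha, D^i_\alpha)$, producing an isomorphism of $V$-schemes from $\pi^{-1}(V)$ onto $V \times E$ for a fixed finite-dimensional $k$-vector space $E$, linear on fibres. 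Hence $\pi$ is a Zariski-locally trivial vector bundle, and in particular flat. Since $B$ is an open subscheme of a product of affine spaces, it is smooth and irreducible; a vector bundle over a smooth irreducible base is smooth and irreducible, which gives the last sentence of the lemma.

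The main technical point is to ensure that the local trivialization above is an isomorphism of schemes rather than merely a bijection on points: this follows because the inverse parametrization, which sends $(\seqv{f}, U^0, (B^i_\alpha, D^i_\alpha))$ to $\seqv{U}$ by applying the block formula and transporting back along the decomposition $R^{\dvec{d}^{i+1}} = \Bild(f^i) \oplus \Ker(r^i)$, depends polynomially on $\seqv{f}$ once the chart has fixed the retractions $r^i$. I expect the bookkeeping around these identifications to be the only real obstacle; the rest is formal.
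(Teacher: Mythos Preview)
Your proposal is correct and follows essentially the same route as the paper: the paper's proof simply says ``analogously to lemma~\ref{lmm:geomflags:projtriv}'' for the vector-bundle part, and you have spelled out precisely that analogous argument (trivialise over the standard Grassmannian charts by choosing a fixed coordinate complement, then observe the compatibility conditions force block upper-triangular matrices with free off-diagonal blocks). The only minor difference is that the paper deduces irreducibility from the open map plus irreducible fibres criterion (proposition~\ref{isch:propn:openirred}), whereas you invoke directly that a Zariski-locally trivial vector bundle over a smooth irreducible base is smooth and irreducible; these are equivalent here.
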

  \begin{proof}
      The first part is analogously to lemma \ref{lmm:geomflags:projtriv}. Irreducibility
      then follows by the fact that flat morphisms are open and proposition \ref{isch:propn:openirred}.
  \end{proof}
  Before we continue, we give the following easy lemma, stated by K. Bongartz
  in \cite{Bongartz_singularities}, which gives
  rise to a whole class of vector bundles.
  \begin{lemma}
      \label{lmm:geomflags:vecbun}
      Let $X$ be a variety over a ground ring $k$. Let $m, n \in \N$ and
      $f \colon X \rightarrow \Hom(m,n)_k$ a morphism. Then
      for any $r\in \N$, the variety $X(r)$ given by the functor of points
      \[ X(r)(R):= \Set{ x \in X(R) | f(x) \in\Hom(m,n)_{m-r}(R)} \]
      is a locally closed subvariety of $X$. Moreover, the closed subvariety
      \[
      U_r(R):=\Set{ (x, v) \in X(r)(R)\times R^m | f(x)(v) = 0 }
      \]
      of $X(r) \times k^m$
      is a sub
      vector bundle
      of rank $r$ over $X(r)$.
  \end{lemma}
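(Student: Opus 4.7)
The plan is to deduce both claims from the preliminary lemma on $\Sch{Hom}(m,n)_r$ combined with a pullback argument. For the first claim, that lemma states that $\Sch{Hom}(m,n)_{m-r}$ is a locally closed subscheme of $\Sch{Hom}(m,n)_k$. Unravelling the functor-of-points definition, $X(r)$ is precisely the scheme-theoretic preimage $f^{-1}(\Sch{Hom}(m,n)_{m-r})$, so the conclusion follows since preimages of locally closed subschemes under morphisms are locally closed.

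For the second claim, I would realize $U_r$ as the kernel of a morphism of trivial vector bundles over $X(r)$. Over $\Sch{Hom}(m,n)_{m-r}$ there is a universal linear map $\phi \colon \mathcal{O}^m \to \mathcal{O}^n$ arising from the identity self-map. By the very definition of $\Sch{Hom}(m,n)_{m-r}$, for every affine open $\Spec R$ the image of $\phi$ is a direct summand of $R^n$ of rank $m-r$; in particular the short exact sequence
\[
0 \to \Ker \phi \to \mathcal{O}^m \to \Bild \phi \to 0
\]
is globally split, so $\Ker \phi$ is a direct summand of $\mathcal{O}^m$ of rank $r$, hence a sub vector bundle of rank $r$.

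Pulling this split exact sequence back along the morphism $f|_{X(r)} \colon X(r) \to \Sch{Hom}(m,n)_{m-r}$ preserves the splitting and the ranks, producing a sub vector bundle of $\mathcal{O}_{X(r)}^m$ of rank $r$. The functor-of-points definition of $U_r$ identifies it with this pulled-back kernel, and its closedness in $X(r) \times k^m$ is automatic since it is cut out by $n$ linear equations in the $v$-coordinates. The main point to watch is that the direct-summand property is preserved under arbitrary base change (not just flat base change), because it is encoded as the splitting of an exact sequence of modules; once this is noted, the argument is essentially formal.
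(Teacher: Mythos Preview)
Your argument is correct. The paper's own proof is a single sentence invoking Fitting ideals, so the two approaches genuinely differ in flavour. The Fitting-ideal route is the standard commutative-algebra device: the $i$-th Fitting ideal of the cokernel of the universal map cuts out the locus where the rank is at most $i$, giving the locally closed stratification directly, and on each stratum the cokernel (hence the kernel) is locally free of the expected rank by the usual Fitting-ideal characterisation of projectivity. Your route instead reuses the earlier lemma that $\Sch{Hom}(m,n)_{m-r}$ is locally closed, then observes that projectivity of the image forces the kernel sequence to split, and that split sequences are stable under arbitrary base change. Your approach is more in keeping with the functor-of-points setup already established in the paper and avoids introducing a new tool; the Fitting-ideal approach is shorter to state and works uniformly without needing the preliminary lemma on $\Sch{Hom}(m,n)_r$. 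Either way the content is the same: over the constant-rank locus the image is projective, so the kernel is a locally free summand and its formation commutes with pullback.
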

  \begin{proof}
  The claim follows easily by using Fitting ideals.
  \end{proof}
  \begin{example}
      Let $(\seqv{M},\seqv{g}) \in \Rep[Q,A_{\nu+1}]{\flvec{e}}(k)$.
      
      Let $\varphi \colon \Rep[Q,A_{\nu+1}]{\flvec{d}} \rightarrow \Hom(m,n)$
      be the morphism given by
      \[ (\seqv{U},\seqv{f}) \mapsto \left( \seqv{h}=(h_i^k) \mapsto
      \left( (h^k_j U_\alpha^k - M_\alpha^k h^k_i)_{\substack{\alpha\colon i \rightarrow j\\0 \le k \le \nu}}, 
      (h^{k+1}_i f^k_i - g^k_i h^k_i)_{\substack{i \in Q_0\\0 \le k \le \nu}} \right) \right)
      \]
      for every $(\seqv{U}, \seqv{f}) \in \Rep[Q,A_{\nu+1}]{\flvec{d}}(R)$,
      where
      \begin{align*}
          m &= \sum_{i \in Q_0} \sum_{k=0}^{\nu} d_i^k e_i^k &
          &\text{and}&
      n &= \sum_{k=0}^\nu \sum_{\alpha \colon i \rightarrow j} d_i^k e_j^k + 
      \sum_{k=0}^{\nu-1} \sum_{i \in Q_0} d_i^k e_i^{k+1}.
      \end{align*}
      Then $\seqv{h} \in \ker \varphi (\seqv{U}, \seqv{f})$ if and only if
      $\seqv{h} \in \Hom_{(R Q)A_{\nu+1}}(\seqv{U}, \seqv{M}\tensor R)$.

      Set
      \[
      \RepHom[Q,A_{\nu+1}]{\flvec{d}, \seqv{M}}_r := U_r(R)
      \]
      from the previous lemma.
      Note that elements $(\seqv{U}, \seqv{h}) \in \RepHom[Q,A_{\nu+1}]{\flvec{d}, \seqv{M}}_r(R)$
      are all pairs consisting of a representation $\seqv{U} \in \Rep[Q,A_{\nu+1}]{\flvec{d}}(R)$
      and a morphism $\seqv{h} \in \Hom_{(RQ)A_{\nu+1}}(\seqv{U}, \seqv{M})$ such that
      $\rank \Hom_{(RQ)A_{\nu+1}}(\seqv{U}, \seqv{M})=r$.

      The lemma yields that the projection
      \begin{align*}
          \RepHom[Q,A_{\nu+1}]{\flvec{d}, \seqv{M}}_r &\rightarrow \Rep[Q,A_{\nu+1}]{\flvec{d}}(r)\\
          (\seqv{U}, \seqv{h}) &\mapsto \seqv{U}
      \end{align*}
      is a vector bundle of rank $r$.

      It also stays a vector bundle if we restrict it to
      the open subset $\Sch{IRep}_{Q,A_{\nu+1}}(\flvec{d})(r)$ of $\Rep[Q,A_{\nu+1}]{\flvec{d}}(r)$.
      We denote the preimage under the projection to this variety by
      $\Sch{IRepHom}_{Q, A_{\nu+1}}(\flvec{d},\seqv{M})_r$.\label{ex:geomflags:hombundle}
%
  \end{example}
  We obtain the following.
  \begin{theorem}
      Let $\flvec{d}$ be a filtration and $K$ a field extension of $k$.
      \label{theom:geomflags:irred}
      Assume that there is an $M \in \mathcal{A}_\flvec{d}(K)$ such that $\dim \Ext^1_{KQ}(M,M) =
      \codim \mathcal{A}_\flvec{d}$. Then $\Fl[Q]{\flvec{d}}{M}$ is smooth over $K$ and
      geometrically irreducible.
  \end{theorem}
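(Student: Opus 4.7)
By flat base change to $\bar K$, we may assume $K$ is algebraically closed. This preserves $\dim \Ext^1_{KQ}(M,M)$, $\codim \mathcal{A}_\flvec{d}$, and smoothness, and turns geometric irreducibility into plain irreducibility. Henceforth $K = \bar K$. The plan is to derive smoothness from the inequality chain of Theorem~\ref{them:codimflag} combined with Chevalley's theorem, and then to derive irreducibility by exploiting that the $\GL_{\dvec{d}^\nu}$-orbit of $M$ is open in $\mathcal{A}_\flvec{d}$.

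\textbf{Smoothness.} Fix any $\seqv{U} \in \Fl[Q]{\flvec{d}}{M}(K)$ and write $\seqv{V} := \seqv{M}/\seqv{U}$, $\Lambda := (KQ)A_{\nu+1}$. The chain
\[ \codim \mathcal{A}_\flvec{d} \le [\seqv{U}, \seqv{V}]^1_\Lambda \le [\seqv{U}, \seqv{M}]^1_\Lambda \le [M,M]^1_{KQ} \]
from Theorem~\ref{them:codimflag} collapses to equalities under the hypothesis. Chevalley applied to $\pi_1 \colon \RepFl[Q]{\flvec{d}} \to \mathcal{A}_\flvec{d}$ gives $\dim_{\seqv{U}} \Fl[Q]{\flvec{d}}{M} \ge \dim \RepFl[Q]{\flvec{d}} - \dim \mathcal{A}_\flvec{d}$; by Lemma~\ref{lmm:geomflags:projtriv} together with the identity $\euf{\seqv{U},\seqv{V}}_\Lambda = \sum_{k=1}^{\nu-1} \euf{\dvec{d}^k, \dvec{d}^{k+1}-\dvec{d}^k}_Q$ from the proof of Theorem~\ref{them:codimflag}, this bound equals $\euf{\seqv{U},\seqv{V}}_\Lambda + \codim \mathcal{A}_\flvec{d}$. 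Using $[\seqv{U},\seqv{V}]^2_\Lambda = 0$ and the collapsed equality $[\seqv{U},\seqv{V}]^1_\Lambda = \codim \mathcal{A}_\flvec{d}$, the lower bound simplifies to $[\seqv{U},\seqv{V}]_\Lambda$, which is precisely the tangent space dimension. Since the tangent dimension is always an upper bound for the local dimension, equality holds and $\Fl[Q]{\flvec{d}}{M}$ is smooth at $\seqv{U}$.

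\textbf{Irreducibility.} A standard orbit-dimension calculation using Ringel's formula gives $\codim_{\Rep[Q]{\dvec{d}^\nu}} \Orbit_M = [M,M]^1_{KQ}$, which equals $\codim \mathcal{A}_\flvec{d}$ by hypothesis. The orbit $\Orbit_M \subset \mathcal{A}_\flvec{d}$ is locally closed and irreducible of the same dimension as the irreducible variety $\mathcal{A}_\flvec{d}$, so it is open and dense in $\mathcal{A}_\flvec{d}$. Consequently $\pi_1^{-1}(\Orbit_M)$ is open in the irreducible variety $\RepFl[Q]{\flvec{d}}$, hence irreducible and in particular connected. The action map
\[ a \colon \GL_{\dvec{d}^\nu} \times \Fl[Q]{\flvec{d}}{M} \longrightarrow \pi_1^{-1}(\Orbit_M), \qquad (g, \seqv{U}) \mapsto (g M, g \seqv{U}), \]
is a principal $\Aut(M)$-bundle. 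Since $\Aut(M) = \End(M)^*$ is open dense in the affine space $\End(M)$, it is irreducible, so the fibres of $a$ are connected. Connectedness of the target therefore forces connectedness of $\GL_{\dvec{d}^\nu} \times \Fl[Q]{\flvec{d}}{M}$, and since $\GL_{\dvec{d}^\nu}$ is connected, $\Fl[Q]{\flvec{d}}{M}$ must be connected. Combined with the smoothness proved above, this yields irreducibility.

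\textbf{Main obstacle.} The technical heart is verifying that $a$ is a principal $\Aut(M)$-bundle (or at least a surjective smooth map with connected fibres), so that connectedness transfers between source and target. The key algebraic inputs—irreducibility of the algebraic groups $\Aut(M) = \End(M)^*$ and $\GL_{\dvec{d}^\nu}$—are standard. All remaining steps reduce to bookkeeping of Ext dimensions and orbit codimensions against the formulas already established in Lemma~\ref{lmm:geomflags:projtriv} and Theorem~\ref{them:codimflag}.
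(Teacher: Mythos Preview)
Your proof is correct. The smoothness argument is essentially the paper's: both collapse the chain of Theorem~\ref{them:codimflag} under the hypothesis and compare the resulting constant tangent dimension $[\seqv{U},\seqv{V}]_\Lambda$ against the Chevalley lower bound on the local fibre dimension.

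For irreducibility, however, you take a genuinely different route. The paper builds auxiliary irreducible varieties: it passes to the parameter space $\Sch{IRep}_{Q,A_{\nu+1}}(\flvec{d})$ of abstract injective chains, restricts to the open locus where $[\seqv{U},\seqv{M}]_\Lambda$ is minimal (equal to $\euf{\flvec{d},\seqv{M}}_\Lambda + [M,M]^1$), forms the Hom vector bundle $\Sch{IRepHom}$ over it, takes the open subset $\Sch{IRepInj}$ where the map to $\seqv{M}$ is injective, and finally surjects onto $\Fl[Q]{\flvec{d}}{M}$ by taking images. Irreducibility is then inherited along this chain of open inclusions, vector bundles, and surjections. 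Your argument instead stays inside $\RepFl[Q]{\flvec{d}}$: you observe that $\Orbit_M$ is open in $\mathcal{A}_{\flvec{d}}$ (since both have codimension $[M,M]^1$), hence $\pi_1^{-1}(\Orbit_M)$ is irreducible, and then transfer connectedness to $\GL_{\dvec{d}^\nu}\times\Fl[Q]{\flvec{d}}{M}$ via the principal $\Aut(M)$-bundle $a$. This is shorter and avoids the extra machinery of Lemma~\ref{lmm:geomflags:vecbun} and Example~\ref{ex:geomflags:hombundle}; the paper's approach, on the other hand, is more explicit about the parametrisation and does not rely on the (standard but not entirely trivial) fact that $\Aut(M)$, being an extension of products of $\GL$'s by a unipotent radical, is special, so that $a$ is Zariski-locally trivial and hence open with connected fibres. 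Your flagging of this as the technical heart is appropriate; once granted, the rest is bookkeeping as you say.
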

  \begin{proof}
      Smoothness is an immediate consequence of the last theorem, since we have that
      $\dim T_{\seqv{U}}\Fl[Q]{\flvec{d}}{M}$ is constant and smaller or equal
      to the dimension at each irreducible component living in $\seqv{U}$. Since $K$
      is a field this implies smoothness. See \cite[I, \S 4, no 4]{DG}.

      Now we prove irreducibility. By base change we can assume that $K$ is algebraically closed.
      Consider all the following schemes as $K$-varieties.
      We construct the following diagram of varieties.
\[
\xymatrix{ & \Sch{IRepHom}_{Q, A_{\nu+1}}(\flvec{d},\seqv{M})_r \ar@{->>}[d]^{\text{vector bundle}} &
\Sch{IRepInj}_{Q, A_{\nu +1}}(\flvec{d},\seqv{M})_r \ar@{_{(}->}[l]_{\text{open}} \ar@{->>}[d]\\
\Sch{IRep}_{Q,A_{\nu+1}}(\flvec{d}) & \ar@{_{(}->}[l]_{\text{open}} \Sch{IRep}_{Q,A_{\nu+1}}(\flvec{d})(r)
& \Fl[Q]{\flvec{d}}{M},\\
}
\]
$r$ being equal to $\euf{\flvec{d}, \seqv{M}}_\Lambda + [M,M]^1$.
Since open subvarieties and images of irreducible varieties are again irreducible and by application
of proposition \ref{isch:propn:openirred} we then obtain that $\Fl[Q]{\flvec{d}}{M}$ also is irreducible.

  Consider the minimal value $r$ of $\dim\Hom(\seqv{U}, \seqv{M})$ for
  $\seqv{U} \in \Sch{IRep}(\flvec{d})(K)$. Denote by
  \begin{alignat*}{2}
      \pi \colon&& \Sch{IRep}(\flvec{d}) &\rightarrow \Rep[Q]{\dvec{d}^\nu}\\
                && \seqv{U} &\mapsto U^\nu.
  \end{alignat*}
  Since $\Orbit_M$ is open in $\mathcal{A}_\flvec{d}$ and $\Sch{IRep}$ is irreducible,
  the intersection of the two open sets $\pi^{-1}(\Orbit_M)$ 
  and
  $\Sch{IRep}(r)$ is non-empty. For all elements $\seqv{U}$ of $\pi^{-1}(\Orbit_M)$
  we have, by theorem \ref{them:codimflag}, that $[\seqv{U}, \seqv{M}]^1_\Lambda = [M,M]^1_Q$.
  We already saw that $[\seqv{U}, \seqv{M}]^2 = 0$, therefore
  \[
  [\seqv{U}, \seqv{M}]_\Lambda = \euf{\seqv{U}, \seqv{M}}_\Lambda + [\seqv{U}, \seqv{M}]^1_\Lambda
  = \euf{\flvec{d}, \seqv{M}}_\Lambda + [M,M]^1_Q.
  \]
  This means that the dimension of the homomorphism space is constant on $\pi^{-1}(\Orbit_M)$ and
  we obtain that $r=\euf{\flvec{d}, \seqv{M}}_\Lambda + [M,M]^1_Q$. Moreover, $\Sch{IRep}_r$
  is irreducible as an open subset of $\Sch{IRep}$.

  We then have that $\Sch{IRepHom}_{Q,A_{\nu+1}}(\flvec{d},\seqv{M})_r$
  is irreducible, since it is a vector bundle on $\Sch{IRep}_r$ by example \ref{ex:geomflags:hombundle}.
  Take the open subvariety $\Sch{IRepInj}
  (\flvec{d},\seqv{M})_r$
  of $\Sch{IRepHom}
  (\flvec{d},\seqv{M})_r$ where the morphism to $\seqv{M}$ is injective.
  It is irreducible as an open subset of an irreducible variety.
  The projection
  from this variety to $\Fl[Q]{\flvec{d}}{M}$ is surjective since
  $\pi^{-1}(\Orbit_M)$ is contained in $\Sch{IRep}(\flvec{d})(r)$, and therefore $\Fl[Q]{\flvec{d}}{M}$ is irreducible.
  \end{proof}

  We now want to interpret theorem \ref{theom:geomflags:irred} in terms of Hall numbers. Let $X_0$
  be a variety defined over a finite field $\F_q$, where $q=p^n$ for a prime $n$. Denote
  by $\overline{\F_q}$ the algebraic closure of $\F_q$ and by $X := X_0 \tensor \overline{\F_q}$
  the variety obtained from $X_0$
  by base change from $\F_q$ to $\overline{\F_q}$. Let $F$ be the Frobenius automorphism acting on $X$.
  Denote by $H^i(X, \Q_\ell)$ the $\ell$-adic cohomology group with compact support for a prime $\ell \neq p$,
  see for example \cite{Freitag_etaleweil}. Denote by $F^i$ the induced action of $F$ on
  cohomology $H^i(X, \Q_\ell)$.
  P. Deligne proved the following theorem.
\begin{theorem}[P. Deligne \cite{Deligne_weil2}, 3.3.9]
Let $X_0$ be a proper and smooth variety over $\F_q$.
For every $i$, the characteristic polynomial $\det( T \id - F^i, H^i(X, \Q_\ell))$
is a polynomial with coefficients in $\Z$, independent of $\ell$ ( $\ell \neq p)$.
The complex roots $\alpha$ of this polynomial have absolute value $\lvert \alpha \rvert = q^{\frac{i}{2}}$.
\end{theorem}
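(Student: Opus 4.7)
The plan is to verify three claims: integrality of the coefficients of $\det(T\id - F^i,\, H^i(X,\Q_\ell))$, independence of the auxiliary prime $\ell$, and the archimedean bound $|\alpha|=q^{i/2}$. Since this is Deligne's Weil~II, I can only outline the established strategy.

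For integrality and $\ell$-independence, the starting point is the Grothendieck--Lefschetz trace formula
\[
|X_0(\F_{q^n})| \;=\; \sum_{i=0}^{2\dim X} (-1)^i \Tr\bigl((F^i)^n,\, H^i(X,\Q_\ell)\bigr),
\]
combined with Grothendieck's rationality theorem. The left hand side lies in $\Z$ and is independent of $\ell$, so by varying $n$ one controls the full alternating product of characteristic polynomials. To decouple the individual factors one invokes purity: once the eigenvalues of $F^i$ are known to have absolute value $q^{i/2}$, each degree-$i$ factor is uniquely characterised among all factors of the zeta function by the weights of its roots, and integrality and $\ell$-independence then drop out factor by factor.

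The substance of the theorem is the weight assertion, and I would follow Deligne's approach. Choose a Lefschetz pencil fibring $X$ over $\Proj^1$ and study the sheaf $\mathcal{F}$ of vanishing cycles on the smooth locus. Two deep inputs are needed: a \emph{big monodromy} theorem, showing that the global monodromy on the middle cohomology of a smooth fibre fills out a classical (symplectic or orthogonal) group; together with positivity of the $L$-function of an even tensor power $\mathcal{F}^{\otimes 2k}$. Combining these via Rankin's trick---positivity gives a crude bound, and letting $k \to \infty$ sharpens it---produces $|\alpha| \le q^{i/2}$ on the middle cohomology of each fibre, and Poincar\'e duality upgrades this to equality. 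The Leray spectral sequence for the pencil then propagates the bound to all of $X$, and an induction on $\dim X$ completes the argument.

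The main obstacle is the third step. The big monodromy input and the tensor-power positivity argument are both nontrivial, and the passage from a one-sided estimate to the two-sided equality is what forces the whole package to be run simultaneously rather than assertion by assertion. In the present paper the theorem is invoked as a black box and no contribution to its proof is attempted.
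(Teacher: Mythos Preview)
Your closing sentence is the relevant observation: the paper does not prove this theorem at all. It is stated with a citation to Deligne's Weil~II and used as a black box in the subsequent discussion of counting polynomials and Betti numbers. There is therefore no proof in the paper to compare your outline against; your sketch of Deligne's strategy is reasonable background but is extraneous to what the paper itself does.
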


Moreover, the Lefschetz fixed point formula yields that
\[
\#X_0(\F_{q^n}) = \sum_{i \ge 0} (-1)^i \Tr( (F^i)^n, H^i(X, \Q_\ell)).
\]

Assume now that there is a polynomial $P \in \Q[t]$ such that, for each finite field extension
$L/\F_q$ we have that $\# X_0(L) = P(|L|)$, $|L|$ being the number of elements of the
finite field. We call $P$ the counting polynomial of $X_0$. Then we have the following.
\begin{theorem}
	Let $X_0$ be proper and smooth over $\F_q$ with counting polynomial $P$.
	Then odd cohomology of $X$ vanishes and
    \[ P(t) = \sum_{i=0}^{\dim X_0} \dim H^{2i}(X, \Q_\ell) t^i.
	\]
\end{theorem}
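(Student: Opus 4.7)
The plan is to combine the Lefschetz fixed point formula with Deligne's purity theorem and the classical linear independence of exponential functions $n \mapsto \beta^n$ for pairwise distinct nonzero $\beta \in \C$.

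First I denote by $\alpha_{i,1}, \dots, \alpha_{i,b_i}$ the eigenvalues of $F^i$ on $H^i(X,\Q_\ell)$, where $b_i = \dim H^i(X,\Q_\ell)$. Deligne's theorem (as cited above) gives $|\alpha_{i,j}| = q^{i/2}$, so eigenvalues drawn from cohomology groups of different degrees have different absolute values and are therefore distinct as complex numbers. Writing $P(t) = \sum_{k\ge 0} c_k t^k$, the Lefschetz trace formula together with the hypothesis $\# X_0(\F_{q^n}) = P(q^n)$ produces, for every $n \ge 1$, the identity
\[
\sum_{i\ge 0}(-1)^i \sum_{j=1}^{b_i}\alpha_{i,j}^n \;=\; \sum_{k\ge 0} c_k\,q^{nk}.
\]
I rewrite this as a single relation $\sum_{\beta} m(\beta)\,\beta^n = 0$, where $\beta$ runs over a finite set of nonzero complex numbers and $m(\beta) \in \C$ is the signed multiplicity obtained by collecting terms from both sides. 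Since this holds for all $n \ge 1$ and Vandermonde invertibility implies linear independence of the $\beta^n$, each $m(\beta)$ must vanish.

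Next I read off the consequences by fixing $\beta$. If $|\beta| = q^{i/2}$ for odd $i$, then $\beta$ cannot equal any $q^k$ and cannot coincide with an eigenvalue from any $H^{i'}$ with $i' \ne i$, so the multiplicity of $\beta$ on $H^i$ must vanish; since $\beta$ was arbitrary, $H^i(X,\Q_\ell) = 0$ for all odd $i$. If $|\beta| = q^k$ but $\beta \ne q^k$ as a complex number, only $H^{2k}$ can contribute and its multiplicity at $\beta$ is likewise zero. Finally, for $\beta = q^k$ the only cohomological contribution comes from $H^{2k}$, and matching coefficients gives that multiplicity equal to $c_k$. Thus $F^{2k}$ acts on $H^{2k}(X,\Q_\ell)$ with the single eigenvalue $q^k$ of multiplicity $c_k$, so $\dim H^{2k}(X,\Q_\ell) = c_k$; the bound $k \le \dim X_0$ comes from the standard vanishing of cohomology above degree $2\dim X_0$ for a proper smooth variety.

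The only delicate point is the bookkeeping that separates eigenvalue contributions by cohomological degree, and this is precisely where Deligne's purity estimate $|\alpha_{i,j}|=q^{i/2}$ is indispensable: without it, eigenvalues coming from different $H^i$ could collide and destroy the coefficient matching. Everything else reduces to the linear independence of exponentials.
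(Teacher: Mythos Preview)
Your argument is correct. The paper itself does not give a proof of this statement; it simply refers to \cite[Lemma~A.1]{BillVandenBergh_absolutelyindec}. What you have written is precisely the standard proof that one finds behind such a citation: Lefschetz trace formula plus Deligne's weight bounds plus linear independence of the functions $n \mapsto \beta^n$ for distinct nonzero $\beta \in \C$. The separation of eigenvalues by absolute value via purity is exactly the mechanism that makes the coefficient matching work, and you have identified it correctly.

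One small remark on presentation: the eigenvalues $\alpha_{i,j}$ a priori live in $\overline{\Q_\ell}$, and the absolute value statement in Deligne's theorem refers to their images under any embedding into $\C$ (equivalently, to the complex roots of the integral characteristic polynomial). Fixing one such embedding once and for all turns the Lefschetz identity into an equality of complex numbers, after which your Vandermonde argument goes through verbatim. This is implicit in what you wrote but worth making explicit. Also note that your argument in fact proves $\deg P \le \dim X_0$ rather than assuming it: for $k > \dim X_0$ there is no cohomological contribution at absolute value $q^k$, forcing $c_k = 0$.
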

\begin{proof}
	See \cite[Lemma A.1]{BillVandenBergh_absolutelyindec}.
\end{proof}

Assume now that $Y$ is a projective scheme over $\Z$ and set
$Y_k:=Y \tensor k$ for any field $k$. Note that for $Y$ $\ell$-adic
cohomology agrees with $\ell$-adic cohomology with compact support.
Assume furthermore that there is a counting polynomial $P \in \Q[t]$ such that,
for each finite field $k$, we have that $\# Y_k(k) = P(|k|)$.
By the base change theorem \cite[Theorem 1.6.1]{Freitag_etaleweil} we have
\[ H^i(Y_{\overline{\Q}}, \Q_\ell) \cong H^i (Y_\C, \Q_\ell). \]
By the comparison theorem \cite[Theorem 1.11.6]{Freitag_etaleweil} we have
\[ H^i(Y_\C, \Q_\ell) \cong H^i (Y_\C(\C), \Q_\ell),\]
where on the right hand side we consider the usual cohomology of the complex analytic manifold
attached to $Y_\C$.

Moreover, there is an open, dense subset $U$ of $\Spec \Z$ such that
$H^i (Y_{\overline{\kappa(v)}}, \Q_\ell) \cong H^i (Y_{\overline{\Q}}, \Q_\ell)$ for all
$v \in U$, where $\kappa(v)$ denotes the residue field at $v$. This means that for almost all primes $p$ we have that
\[H^i (Y_{\overline{\F_p}}, \Q_\ell) \cong H^i (Y_{\overline{\Q}}, \Q_\ell) \cong H^i(Y_\C(\C), \Q_\ell).\]

Therefore, if we know the Betti numbers of $Y_\C(\C)$, then we know the coefficients
of the counting polynomial. In order to apply this to our situation we use
the following theorem of W. Crawley-Boevey \cite{Bill_rigidintegral}.
\begin{theorem}
    \label{theom:geomflags:rigidreps}
	Let $M$ be an $k$-representation without self-extensions. Then there is
	a $\Z$-representation $N$ such that $M= N \tensor k$ and for
	all fields $K$ we have that $\Ext (N \tensor K, N \tensor K) = 0$.
\end{theorem}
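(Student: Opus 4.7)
My plan is to produce the integral form $N$ by finding a $\Z$-point of the representation scheme $\Rep[Q]{\dvec{d}}_\Z$, which is an affine space over $\Z$, lying in the rigid locus over every fibre of $\Spec \Z$ and whose base change to $k$ recovers $M$.

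First, I would verify that the subset
\[
U := \{\, x \in \Rep[Q]{\dvec{d}}_\Z : \Ext^1(L_x, L_x) = 0 \,\}
\]
is Zariski open, where $L_x$ denotes the fibre representation at a point $x$. This follows from upper semicontinuity of $\dim \Hom(L_x,L_x)$ in the universal family, combined with the hereditary Euler form identity $[L_x,L_x] - [L_x,L_x]^1 = \bform{\dvec{d}}{\dvec{d}}_Q$. Since $M$ provides a $k$-valued point of $U$, this open set is non-empty, and because $M$ is rigid, $M$ even lies in an open $\GL_{\dvec{d}}$-orbit inside the $k$-fibre $U_k$.

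Next I would attempt to lift to a $\Z$-point of $U$. Any $k$-valued point of the affine space $\Rep[Q]{\dvec{d}}_\Z$ can be realised as the reduction of some $\Z$-valued point, after clearing denominators by a suitable $\GL_{\dvec{d}}(\Q)$-conjugation; this produces a candidate $N$ with $N \tensor k \cong M$ that lies in $U$ at least over the generic point of $\Spec \Z$ and over $k$, by what was just shown.

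The genuine obstacle, and the essential content of the theorem, is the bad-prime phenomenon: a non-empty Zariski open subscheme of an affine space over $\Z$ may entirely miss the fibre over some prime $p$, so the naive lift can fail to be rigid after reduction modulo certain primes. To defeat this, I would reduce via Krull--Schmidt to the case of indecomposable rigids, since $\Ext^1(M,M)=0$ implies $\Ext^1(M_i,M_j)=0$ for all indecomposable summands, and then invoke the classification of indecomposable rigid representations by real Schur roots. Each real Schur root admits a universal integral representative constructed inductively, for instance via Schofield's perpendicular categories or by iterated application of reflection functors starting from simple representations at sinks. Since each ingredient of this construction commutes with arbitrary base change, the resulting integral model is rigid uniformly over every field, which is precisely the asserted conclusion.
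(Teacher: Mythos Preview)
The paper does not give a proof of this statement at all: it is quoted verbatim as a theorem of Crawley-Boevey, with the citation \cite{Bill_rigidintegral}, and then immediately used as a black box in the subsequent theorem. There is therefore no ``paper's own proof'' to compare your proposal against.

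As for your sketch on its own merits, the overall strategy is in the spirit of Crawley-Boevey's argument, but two steps are not yet under control. First, your lifting step (``clear denominators by a $\GL_{\dvec d}(\Q)$-conjugation'') tacitly assumes $k$ is a subfield of an extension of $\Q$; for general $k$ you must first argue that a rigid $k$-representation is already defined over the prime subfield (this follows from the open-orbit property, but you have not said so). Second, and more seriously, reducing to indecomposable summands $M_i$ gives you integral lifts $N_i$ with $\Ext^1(N_i\otimes K,N_i\otimes K)=0$, but rigidity of the direct sum $N=\bigoplus N_i$ over every field also requires $\Ext^1(N_i\otimes K,N_j\otimes K)=0$ for $i\neq j$, and nothing in your outline forces this. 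The actual argument in \cite{Bill_rigidintegral} handles both issues simultaneously by an induction through Schofield's perpendicular categories (building the whole rigid module, not just its summands, from smaller rigid modules via universal extensions that commute with base change), which is the step you only allude to in your final paragraph.
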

Putting all this together, we obtain the following.
\begin{theorem}
      Assume that there is an $M \in \mathcal{A}_\flvec{d}(\Q)$, being a direct sum of
      exceptional representations, such that
      \[
      \dim \Ext^1_{\Q Q}(M,M) =
      \codim \mathcal{A}_\flvec{d}.
      \]
      Let $N$ be a $\Z$-representation and $P \in \Q[t]$ a polynomial, such that
      $N \tensor \Q \cong M$ and
      $\# \Fl[Q]{\flvec{d}}{N \tensor \F_q} = P(q)$ for every prime power $q$. Then
      $P(0) = 1$ and $P(1) = \chi\left( \Fl[Q]{\flvec{d}}{M \tensor \C}\right) > 0$.

      Moreover, if $Q$ is Dynkin or extended Dynkin, then there is a representation $N$ and
      a polynomial $P$ with the required properties.
\end{theorem}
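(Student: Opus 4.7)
The plan is to combine Theorem \ref{theom:geomflags:irred} with the theorem on smooth proper varieties over $\F_q$ with counting polynomial that was recalled above, and then with the base change and comparison theorems for $\ell$-adic cohomology.

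First I would apply Theorem \ref{theom:geomflags:irred} over $\Q$ to deduce that $\Fl[Q]{\flvec{d}}{M}$ is smooth and geometrically irreducible. The next key step is to show that for almost all primes $p$ the fibre $\Fl[Q]{\flvec{d}}{N \tensor \F_p}$ also satisfies the hypotheses of Theorem \ref{theom:geomflags:irred}. The codimension $\codim \mathcal{A}_\flvec{d}$ can be computed uniformly from the integral family $\RepFl[Q]{\flvec{d}} \to \Rep[Q]{\dvec{d}^\nu}$, both sides being smooth and flat over $\Z$, so by generic flatness it agrees in characteristic $0$ and in characteristic $p$ for almost all $p$. On the other hand, $\dim \Ext^1(N \tensor \F_p, N \tensor \F_p)$ is upper semicontinuous in $p$ with generic value $\dim \Ext^1(M,M) = \codim \mathcal{A}_\flvec{d}$, and the opposite inequality is always available from Theorem \ref{them:codimflag}. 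This pins down the equality for almost all $p$, giving smoothness and geometric irreducibility of $\Fl[Q]{\flvec{d}}{N \tensor \F_p}$ in those characteristics.

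Next I would invoke the counting-polynomial theorem recalled above: for such $p$ the variety is projective, smooth and geometrically connected, so
\[
P(t) = \sum_{i \ge 0} \dim H^{2i}(\Fl[Q]{\flvec{d}}{N \tensor \overline{\F_p}}, \Q_\ell)\, t^i
\]
with all odd cohomology vanishing. Geometric connectedness forces $\dim H^0 = 1$, so $P(0) = 1$, and non-negativity of the remaining coefficients yields $P(1) \ge 1 > 0$. The base change and comparison theorems recalled in the text identify these $\ell$-adic Betti numbers with the classical Betti numbers of $\Fl[Q]{\flvec{d}}{M \tensor \C}$ for almost all $p$, so $P(1)$ equals the sum of the even Betti numbers of the complex fibre, which agrees with its topological Euler characteristic since odd cohomology vanishes there as well.

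For the moreover part, in the Dynkin case every indecomposable is exceptional, and the generic representative of $\mathcal{A}_\flvec{d}$ is a direct sum of exceptionals which automatically realises $\dim \Ext^1(M,M) = \codim \mathcal{A}_\flvec{d}$; the existence of a counting polynomial $P$ for the relevant quiver flags is classical through Hall polynomials. The extended Dynkin case is analogous, combining the description of generic representations in terms of preprojective, regular and preinjective components with Hubery's results on Hall polynomials for tame quivers. The hard part of the whole argument is the transfer of the codimension identity across characteristics in the second paragraph above; once this is secured, Deligne's theorem and the comparison between $\ell$-adic and singular cohomology essentially finish the proof.
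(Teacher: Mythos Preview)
Your proposal follows essentially the same route as the paper: apply Theorem~\ref{theom:geomflags:irred} to obtain smoothness and irreducibility of the flag variety, feed this into the counting-polynomial theorem for smooth proper varieties, and then use base change and comparison to read off the Betti numbers of the complex fibre. The conclusions $P(0)=1$ and $P(1)=\chi>0$ then drop out exactly as you describe.

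There is one noteworthy difference in emphasis. The paper simply asserts that $X_k = \Fl[Q]{\flvec{d}}{N\otimes k}$ is smooth and irreducible \emph{for every field $k$}, without further comment. You instead argue carefully that this holds for \emph{almost all} primes $p$, via semicontinuity of $\dim\Ext^1$ over $\Spec\Z$ and constructibility of $\codim\mathcal{A}_{\flvec{d}}$. Your version is the more honest one: for an arbitrary $\Z$-lift $N$ of $M$ there is no reason the equality $\dim\Ext^1(N\otimes k,N\otimes k)=\codim\mathcal{A}_{\flvec{d}}$ should persist to every field, and ``almost all $p$'' is exactly what the $\ell$-adic comparison argument requires anyway. (Your phrase ``the opposite inequality is always available from Theorem~\ref{them:codimflag}'' is superfluous here: semicontinuity already gives equality on an open dense set of $\Spec\Z$, which is all you need.)

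For the ``moreover'' clause your sketch is correct in spirit but less sharp than the paper. The paper obtains the $\Z$-lift $N$ by applying Crawley-Boevey's theorem (Theorem~\ref{theom:geomflags:rigidreps}) to each exceptional summand of $M$, and obtains the counting polynomial $P$ from the existence of Hall polynomials due to Ringel (Dynkin) and Hubery (extended Dynkin). You should name Crawley-Boevey's result explicitly rather than gesturing at ``generic representations''; that theorem is precisely what guarantees a $\Z$-form whose reductions remain well-behaved.
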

\begin{proof}
  Let $X := \Fl[Q]{\flvec{d}}{N}$ as a scheme
  over $\Z$.
  Using theorem \ref{theom:geomflags:irred} we
  obtain that $X_k$ is smooth and irreducible for every field $k$. By the previous discussion
  we have then that the
  $i$-th coefficient of $P$ is exactly
  $\dim H^{2i}(X_\C(\C), \Q_\ell)$ and that odd cohomology vanishes. Therefore,
  \[
  0<\sum\dim H^{2i}(X_\C(\C), \Q_\ell) = \chi\left( X_\C \right) = P(1).
  \]
  By irreducibility we have that
  \[
  P(0) = \dim H^{0}(X_\C(\C), \Q_\ell) = 1
  \]
  and this proves the first claims.

  If $Q$ is Dynkin or extended Dynkin,
  then let $N$ be the $\Z$-representation given by theorem \ref{theom:geomflags:rigidreps}. We have the
  polynomial $P$ since we have Hall polynomials in both cases for exceptional representations
  by results of Ringel \cite{Ringel_hallpolysforrepfiniteheralgs} and Hubery
  \cite{Hubery_hallpolys}.
  \end{proof}

Let $q$ be a prime power and $M$ an $\F_q$-representation of
a Dynkin quiver $Q$. If $\overline{\Orbit_M}=\mathcal{A}_w$ for some word $w$ in simples, then
$F_w^M = 1 \mod q$. Therefore, in the Dynkin case, the generic coefficient
in the expression $u_w$ in the Hall algebra is equal to one modulo $q$. In section \ref{sec:reflhallnum}
we will show that even
more is true, namely that every non-zero coefficient in this expression is equal to one modulo $q$.

\section{A geometric version of BGP reflection functors}
In this chapter we want to define a geometric version of BGP reflection functors. First
we recall the definition of the usual BGP reflection functors as introduced by 
Bern{\v{s}}te{\u\i}n, Gel$'$fand and Ponomarev
\cite{BernsteinGelfandPonomarev_coxeter}. Then we go on to define reflection functors on flags
of subrepresentations. In the remainder of this paper we fix a field $k$ and consider only
$k$-valued points of all varieties, if not otherwise stated.

Now we recall the definition of BGP reflection functors. The
main reference for this section is \cite{Ringel_integral}. For a nice
introduction see \cite{HKrause_repsofalgs}.

Let $Q$ be a quiver and $\dvec{d}$ be a dimension vector.
For each vertex $a \in Q_0$ we have the reflection
\begin{alignat*}{2}
 \sigma_a &\colon\quad & \Z Q_0 & \rightarrow \Z Q_0\\
 & & \dvec{d} &\mapsto \dvec{d} - \sbform{\dvec{d}}{\epsilon_a}_Q \epsilon_a.
\end{alignat*}
If $Q$ has no loop at $a$, one easily checks that $\sigma_a^2 \dvec{d} = \dvec{d}$.

We also define reflections on the quiver itself. The quiver $\sigma_a Q$ is
obtained from $Q$ by reversing all arrows ending or starting in $a$. If $\alpha\colon a \rightarrow j$
is an arrow in $Q_1$, then we call $\alpha^* : j \rightarrow a$ the arrow in the other direction and
analogously for $\alpha \colon i \rightarrow a$. We have that $(\sigma_a Q)_0 = Q_0$, therefore we can
regard $\sigma_a \dvec{d}$ as a dimension vector on $\sigma_a Q$.
Obviously, $\sigma_a^2 Q =Q$ if we identify $\alpha^{* *}$ with $\alpha$ (and we will do
this in the remainder).

If $a$ is a sink of $Q$, we define for each $k$-representation $M$ of $Q$ the homomorphism
\[
\phi^M_a \colon \bigoplus_{\alpha: j \rightarrow a} M_j \xrightarrow{\left(
\begin{smallmatrix}
	M_\alpha
\end{smallmatrix}\right)} M_a.
\]
Dually, if $b$ is a source of $Q$, we define
\[
\phi^M_b \colon M_b \xrightarrow{\left(
\begin{smallmatrix}
	    M_\alpha
	\end{smallmatrix}\right)} \bigoplus_{\alpha: b \rightarrow j} M_j .
\]
Note that $D\phi^{DM}_a = \phi^M_a$, where $D$ denotes the $k$-dual, and
that $d_a - \rank \phi^X_a = \dim \Hom(X, S_a)$ for $a$ a sink of $Q$ and
a representation $X$ of dimension vector $\dvec{d}$.

We define a pair of reflection functors
$S_a^+$ and $S_b^-$. To this end we fix a $k$-representation $M$ of $Q$ of dimension vector $\dvec{d}$.
If the vertex $a$ is a sink of $Q$ we construct
\[ S_a^+ \colon \repK{Q}{k} \rightarrow \repK{\sigma_a Q}{k} \]
as follows. We define $S_a^+ M := N$ by letting $N_i := M_i$ for a vertex $i \neq a$ and letting $N_a$ be
the kernel of the map $\Phi^M_a$. Denote by
$\iota \colon \Ker \phi^M_a \rightarrow \bigoplus M_j$ the canonical inclusion and by
$\pi_i \colon \bigoplus M_j \rightarrow M_i$ the canonical projection.
Then, for each
$\alpha: i \rightarrow a$ we let $N_{\alpha^*} \colon \Ker \phi^M_a \rightarrow M_i$
be
the composition $\pi_i \circ \iota$ making the following diagram commute
\[
\begin{CD}
	0 @>>>  \Ker \phi^M_a  @>>\iota> \bigoplus M_j @>>> M_a\\
	@.     @VV{N_{\alpha^*}}V  @VV{\pi_i}V @.\\
	@.    M_i @=       M_i. \\
\end{CD}
\]
We obtain
a $k$-representation $N$ of $\sigma_a Q$. We call this representation $S^+_a M$.
This construction yields a functor $S^+_a$.

Let $s := d_a - \rank \phi^M_a$ be
the codimension
of $\Bild \phi^M_a$ in $M_a$.
We have that
\[e_a := \dim (S^+_a M)_a = \sum_{\alpha: i \rightarrow a} d_i - \rank \Phi^M_a  =
d_a - (\dvec{d}, \epsilon_a) + s = d_a -s  - (\dvec{d}- s \epsilon_a, \epsilon_a).\]
Therefore, $\dimv{ S^+_a M } = \sigma_a (\dvec{d} - s \epsilon_a) = \sigma_a (\dvec{d}) + s \epsilon_a$.
Dually, for a sink of $b$ of $Q$ we
obtain a functor $S^-_b$.

For a sink $a$ of $Q$ we have that $(S_a^-, S_a^+)$ is a pair of adjoint functors and
that $S_a^+$ is left exact and $S_a^-$ is right exact. There is a natural
monomorphism $\iota_{a,M} \colon S_a^- S_a^+ M \rightarrow M$ for $M \in \repK{Q}{k}$ and a natural epimorphism
$\pi_{a,N} \colon N \rightarrow S_a^+ S_a^- N$ for $N \in \repK{\sigma_a Q}{k}$.
We have the following lemma.
\begin{lemma}
 Let $a$ be a sink and $X$ an indecomposable $k$-representation of $Q$. Then, the following are equivalent:
 \begin{enumerate}
 \item $X \ncong S_a$.
 \item $S^+_a X$ is indecomposable.
 \item $S^+_a X \neq 0$.
 \item $S_a^- S_a^+ X \cong X$ via the natural inclusion.
 \item The map $\Phi_a^X$ is an epimorphism.
 \item $\sigma_a (\dvec{\dim} X) > 0$.
 \item $\dvec{\dim} S^+_a X = \sigma_a (\dvec{\dim} X)$.
 \end{enumerate}
\end{lemma}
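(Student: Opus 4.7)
The plan is to take condition (5) as the hub and organise the arguments around it: I will establish the two direct equivalences (1)~$\Leftrightarrow$~(5) and (5)~$\Leftrightarrow$~(7), fold (6) into the same cluster, and close the remaining conditions via a cycle (5)~$\Rightarrow$~(4)~$\Rightarrow$~(2)~$\Rightarrow$~(3)~$\Rightarrow$~(5). The structural fact used repeatedly is that since $a$ is a sink, any vector-space complement $Y\subseteq X_a$ of $\Bild\phi^X_a$ is automatically a subrepresentation of $X$, and setting $X'_i:=X_i$ for $i\neq a$ with $X'_a:=\Bild\phi^X_a$ yields a direct sum decomposition $X=X'\oplus Y$ in which $Y$ is a direct sum of copies of $S_a$.

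From this decomposition, (1)~$\Leftrightarrow$~(5) is immediate: indecomposability of $X$ forces either $Y=0$ (giving (5)) or $X'=0$ (forcing $X\cong S_a$), and conversely $\phi_a^{S_a}\colon 0\to k$ is not surjective. Equivalence (5)~$\Leftrightarrow$~(7) is the dimension computation already given in the paper: with $s:=d_a-\rank\phi^X_a$ we have $\dim(S_a^+X)_a=\sigma_a(\dvec{d})_a+s$, so (7) is equivalent to $s=0$, i.e.\ to (5). For (6), if $X\cong S_a$ then $\sigma_a\dvec{d}=-\epsilon_a$ is not non-negative, whereas if $X\ncong S_a$ is indecomposable then (5) and (7) hold and $\sigma_a\dvec{d}=\dim S_a^+X$ is a non-zero non-negative dimension vector.

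For the cycle, (5)~$\Rightarrow$~(4) comes from unpacking the counit: $(S_a^-S_a^+X)_a$ is the cokernel of the inclusion $\Ker\phi^X_a\hookrightarrow\bigoplus_{\alpha\colon j\to a}X_j$, hence isomorphic to $\Bild\phi^X_a$, and the natural map $\iota_{a,X}$ realises it inside $X_a$ via this inclusion, which is an isomorphism precisely when $\phi^X_a$ is surjective. For (4)~$\Rightarrow$~(2), a decomposition $S_a^+X=M\oplus N$ would induce $X\cong S_a^-M\oplus S_a^-N$, so indecomposability forces one summand, say $S_a^-N$, to vanish, which in turn forces $N$ to be a direct sum of copies of the simple at $a$ in $\sigma_aQ$; but the construction of $S_a^+X$ makes the structural map $\Ker\phi^X_a\hookrightarrow\bigoplus X_j$ injective, so $S_a^+X$ contains no such submodule and $N=0$. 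Implication (2)~$\Rightarrow$~(3) is trivial, and (3)~$\Rightarrow$~(5) is the contrapositive of $X\cong S_a\Rightarrow S_a^+X=\Ker(0\to k)=0$.

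The main obstacle I anticipate is the step (4)~$\Rightarrow$~(2): it relies on the auxiliary observation that $S_a^+X$ has no direct summand isomorphic to the simple at $a$ in $\sigma_aQ$, which is not stated as a separate lemma in the text but must be read off from the injectivity of $\Ker\phi^X_a\hookrightarrow\bigoplus X_j$ built into the construction of $S_a^+$. Every other step is either a routine dimension count or a direct application of the decomposition $X=X'\oplus Y$.
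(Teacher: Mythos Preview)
The paper does not prove this lemma: it is stated as a standard fact about BGP reflection functors, with the references \cite{Ringel_integral} and \cite{HKrause_repsofalgs} given at the beginning of the section serving as justification. So there is no ``paper's own proof'' to compare against; your task is simply to give a correct argument, and the one you outline is essentially the textbook proof.

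Your argument is sound. Two small points worth tightening. First, in your treatment of (6) you assert that $\sigma_a(\dimve X)=\dimve S_a^+X$ is \emph{non-zero}; this is true, but at that stage you have only established (1)$\Leftrightarrow$(5)$\Leftrightarrow$(7), not yet (3). The missing observation is immediate: if $X$ is indecomposable and $X\ncong S_a$, then $X_i\neq 0$ for some $i\neq a$ (otherwise $X$ would be a direct sum of copies of $S_a$), and $(S_a^+X)_i=X_i\neq 0$. Second, your phrasing of (3)$\Rightarrow$(5) as ``the contrapositive of $X\cong S_a\Rightarrow S_a^+X=0$'' is really (3)$\Rightarrow$(1) followed by the already-established (1)$\Rightarrow$(5); that is fine logically, but it would be cleaner to say so explicitly. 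The step (4)$\Rightarrow$(2) is the only one with any content, and your justification via $\Hom_{\sigma_aQ}(S_a,S_a^+X)=0$ (coming from the injectivity of $\Ker\phi_a^X\hookrightarrow\bigoplus_{j\to a}X_j$) is exactly the right observation.
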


An sequence $(i_1, \dots, i_r)$ of the vertices of $Q$ is called admissible,
if
$i_p$ is a sink in $\sigma_{i_{p-1}} \dotsm \sigma_{i_1} Q$ for each $1 \le p \le r$.
An admissible sequence is called admissible ordering if each vertex of $Q$ appears exactly once
in the sequence.
Note that there is an admissible ordering if and only if
$Q$ has no oriented cycles. Such a quiver is called acyclic. In the following we always assume that $Q$ is
acyclic.
For any admissible sequence of sinks $w=(i_1, \dots, i_r)$ define
\[S_w^+ := S_{i_r}^+ \circ \dots \circ S_{i_1}^+. \]

If $w=(i_1, \dots, i_n)$ is an admissible ordering of $Q$, we define the Coxeter functors
\begin{align*}
  C^+ &:= S^+_{i_n} \circ \dots \circ S^+_{i_1} & C^- &:= S^-_{i_1} \circ \dots \circ S^-_{i_n}. \\
\end{align*}
Since both reverse each arrow of $Q$ exactly twice, these are endofunctors of $\repK{Q}{k}$. Neither functor
depends on the
choice of the admissible ordering.

A $k$-representation $P$ is projective if and only if $C^+ P =0$. Dually, a $k$-representation $I$
is injective if and only if $C^- I =0$.
  An indecomposable $k$-representation $M$ of $Q$ is called preprojective if $(C^+)^r M=0$ for some $r\ge0$
  and preinjective if $(C^-)^r M =0$ for some $r\ge0$. If $M$ is neither preprojective nor preinjective, then
  $M$ is called regular.

  An arbitrary $k$-representation is called preprojective (or preinjective or regular) if it is isomorphic to a direct sum of indecomposable
  preprojective (or preinjective or regular) representations.

Let $M$ be a $k$-representation of $Q$ and $\flvec{d}$ a filtration of $\dimve M$.
We want to define reflections on a flag $\seqv{U} \in \Fl[Q]{\seqv{\dvec{d}}}{M}$. Let $a$ be
a sink and let $\seqv{U} \in \Fl[Q]{\seqv{\dvec{d}}}{M}$. Then, for each $i$, we have the following commutative diagram with exact rows.
\[
\begin{CD}
    0 @>>>  (S^+_a U^{i-1})_a      @>>> \bigoplus U^{i-1}_j @>{\phi^{U^{i-1}}_a}>> \Bild \phi^{U^{i-1}}_a @>>> 0\\
    @.          @VVfV                        @VVV           @VVgV     @.\\
  0 @>>> (S^+_a U^i)_a @>>> \bigoplus U^i_j @>>\phi^{U^i}_a>
  \Bild \phi^{U^i}_a @>>> 0\\
\end{CD}
\]
By definition, $g$ and the map in the middle are injective. Therefore, $f$ is injective. This immediately yields that
$S^+_a \seqv{U}$ is a new quiver flag of $S^+_a M = S^+_a U^\nu$. The problem is that the
dimension of $S^+_a U^i$ is dependent on the rank of $\phi^{U^i}_a$.
This motivates the next definition. Recall that $d_a - \rank \phi^X_a = \dim \Hom(X, S_a)$
for a representation $X$ of dimension vector $\dvec{d}$.
\begin{definition}
  Let $a$ be a sink, $\dvec{d}$ a dimension vector and $s$ an integer. Define
    \[ \Rep[Q]{\dvec{d}}\lrang{a}^{s} :=
    \Set{ M \in \Rep[Q]{\dvec{d}} | \dim \Hom(M, S_a) = s}.\]

    Let $\seqv{\dvec{d}}$ be a $(\nu+1)$-tuple of dimension vectors
    and
    $\seqv{r} = (r^0, r^1, \dots , r^\nu)$ be a $(\nu+1)$-tuple of integers.
    For each representation $M$ define
\[
\Fl[Q]{\seqv{\dvec{d}}}{M}\lrang{a}^{\seqv{r}} :=
\Set{ \seqv{U} \in \Fl[Q]{\seqv{\dvec{d}}}{M} |
    \dim \Hom(U^i, S_a) = r^i}.
\]

     Moreover, let
     \[ \Rep[Q]{\dvec{d}}\lrang{a} := \Rep[Q]{\dvec{d}}\lrang{a}^0\] and
     \[ \Fl[Q]{\seqv{\dvec{d}}}{M}\lrang{a} := \Fl[Q]{\seqv{\dvec{d}}}{M}\lrang{a}^{\seqv{0}}.\]
\end{definition}
\begin{remark}
    Recall that a filtration $\flvec{d}$ of $\dimve M$ is a sequence of dimension
    vectors such that $\dvec{d}^0=0$, $\dvec{d}^\nu = \dimve M$ and that $\dvec{d}^i \le \dvec{d}^{i+1}$.
  In order to know a filtration $\flvec{d}$ it is enough to know the terms
  $\dvec{d}^1, \dots, \dvec{d}^{\nu-1}$, since $\dvec{d}^0$ is always $0$ and
  $\dvec{d}^\nu$ is always $\dimve M$. Therefore, we identify the $(\nu-1)$-tuple
  $(\dvec{d}^1, \dots, \dvec{d}^{\nu-1})$ with the $(\nu+1)$-tuple $\flvec{d}$.
\end{remark}
\begin{example}
    Let
\[Q =
\makeatletter%
\let\ASYencoding\f@encoding%
\let\ASYfamily\f@family%
\let\ASYseries\f@series%
\let\ASYshape\f@shape%
\makeatother%
\setlength{\unitlength}{1pt}
\includegraphics{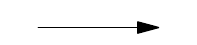}%
\definecolor{ASYcolor}{gray}{0.000000}\color{ASYcolor}
\fontsize{12.000000}{14.400000}\selectfont
\usefont{\ASYencoding}{\ASYfamily}{\ASYseries}{\ASYshape}%
\ASYalign(-53.064667,4.770035)(-0.500000,-0.500000){1.000000 0.000000 0.000000 1.000000}{$1$}
\ASYalign(-3.840845,4.770035)(-0.500000,-0.500000){1.000000 0.000000 0.000000 1.000000}{$2$}
\ASYalign(-28.452756,8.383535)(-0.500000,0.000000){1.000000 0.000000 0.000000 1.000000}{$\scriptstyle\alpha$}
.\]
Consider the representation $M$ given by $M_1 = M_2 = k^2$ and
$M_\alpha =\left(\begin{smallmatrix}
1 & 0\\
0 & 0\\
\end{smallmatrix}\right)$. We have that
$M \in \Rep[Q]{(2,2)}\lrang{2}^1$. Now consider flags of type $((0,0), (1,1), (2,2))$, i.e.
subrepresentations $N$ of dimension vector $(1,1)$. We need two injective linear maps
$f_1, f_2 \colon k^1 \rightarrow k^2$ making the following diagram commutative.
\[
\xymatrix{
k \ar[r]^{N_\alpha} \ar[d]_{f_1} & k \ar[d]^{f_2}\\
k^2 \ar[r]_{\left(\begin{smallmatrix}
    1 & 0\\
    0 & 0
\end{smallmatrix}\right)} & k^2
}
\]
We have the following situations.
\label{bsp:grass}
\begin{itemize}
    \item $N \in \Gr[Q]{(1,1)}{M}\lrang{2}^1$: This means that $N_\alpha = 0$. Therefore,
        we need that the image of $f_1$ is in
        the kernel of $M_\alpha$, which is $1$-dimensional. Hence, a subrepresentation in
        $\Gr[Q]{(1,1)}{M}\lrang{2}^1$ is given by $f_1 = \left( \begin{smallmatrix}
            0\\
            1
        \end{smallmatrix}\right)$ and $f_2$ being an arbitrary inclusion. The point
        $f_1 = f_2 = \left( \begin{smallmatrix}
            0\\
            1
        \end{smallmatrix}\right)$ is special, since for this inclusion we have
        that $M/N \cong S_1 \oplus S_2$ and otherwise $M/N \cong k \overset{1}{\rightarrow} k$.
    \item $N \in \Gr[Q]{(1,1)}{M}\lrang{2}^0$: This means that $N_\alpha \neq 0$. Therefore,
        we need that the image of $f_1$ is not in
        the kernel of $M_\alpha$, which is $1$-dimensional. Hence, a subrepresentation in
        $\Gr[Q]{(1,1)}{M}\lrang{2}^0$ is given by $f_1=f_2= \left( \begin{smallmatrix}
            1\\
            x
        \end{smallmatrix}\right)$ for any $x \in k$.
\end{itemize}
The variety $\Gr[Q]{(1,1)}{M}$ consists therefore of two $\Proj^1_k$ glued together at one point.
Graphically,
\[
\Gr[Q]{(1,1)}{M} = {\color{red}\Gr[Q]{(1,1)}{M}\lrang{2}^1} \amalg {\color{blue}\Gr[Q]{(1,1)}{M}\lrang{2}^0} \cong
\begin{array}{c@{}}
    \begin{tikzpicture}
        \draw[blue] (1,0) circle (.5);
        \draw[red] (0,0) circle (.5);
        \filldraw[fill=red,red] (0.5,0) circle (0.025); 
    \end{tikzpicture}
\end{array}.
\]
Note that the Grassmannian is neither irreducible nor smooth.
\end{example}

In order to get rid of $\seqv{r}$ we define the following maps and then look at the fibres.
\begin{definition}
    Let $a$ be a sink, $\dvec{d}$ a dimension vector and $M \in \Rep[Q]{\dvec{d}}\lrang{a}^s$ for an $s\in \N$.
    We have that $M \cong M' \oplus S_a^s$ for some element $M' \in \Rep[Q]{\dvec{d} - s\epsilon_a}\lrang{a}$.
    Without loss of generality we can assume that $M=M' \oplus S_a^s$ and we set
    $\pi_a M := M'$. Obviously, $\pi_a M$ is unique up to isomorphism.

  Now let $\flvec{d}$ be a filtration
    and $\seqv{r}=(r^0, \dots, r^\nu)$ a $(\nu +1)$-tuple of integers.
    Define
    \begin{alignat*}{3}
    \pi_a^{\seqv{r}} &\colon\quad&  \Fl[Q]{\seqv{\dvec{d}}}{M}\lrang{a}^{\seqv{r}} &\rightarrow
      \Fl[Q]{\seqv{\dvec{d}} - \seqv{r} \epsilon_a}{\pi_a M}\lrang{a}\\
      && \seqv{U} &\mapsto \seqv{V} & & \text{where } V^i_j:=\begin{cases}
      U^i_j & \text{ if } j \neq a,\\
      \Bild \phi^{U^i}_a & \text{ if } j = a.
    \end{cases}
    \end{alignat*}
    \label{def:flagtoredfib}
\end{definition}
\begin{remark}
    Note that $\pi_a M$ is $\iota_{a,M} S^-_a S^+_a M$.
\end{remark}
\begin{example}
    Coming back to example \ref{bsp:grass} we see that $\pi_2^1$ collapses $\Gr[Q]{(1,1)}{M}\lrang{2}^1$
    to the point
    \[\Gr[Q]{(1,0)}{k^2 \overset{\left(\begin{smallmatrix}
        1 & 0\\
    \end{smallmatrix}\right)}{\rightarrow} k}\lrang{2}.\]
    The fibre of $\pi_2^1$ over this point is the vector space Grassmannian $\Gr{1}{k^2}$, being
    isomorphic to $\Proj^1_k$.
\end{example}

We now introduce a little bit more notation. If $\seqv{d}$ is a sequence, then denote by
$\overleftarrow{\seqv{d}}$ the sequence given by $(\overleftarrow{\seqv{d}})^i = d^{\nu-i}$. Moreover,
we define the sequence $\seqv{e}$ by $e^i := d^\nu - d^{\nu-i}$.
Therefore, if $\seqv{\dvec{d}}$ is a filtration of $\dvec{d}^\nu$,
then $\seqv{\dvec{e}}$ is a filtration of $\dvec{d}^\nu$.

The fibre of the map $\pi_a^{\seqv{r}}$ is a set of the following type.
\begin{definition}
  Let $\seqv{e} = (e^0, e^1, \dots, e^\nu)$ and $\seqv{r} = (r^0, r^1, \dots , r^\nu)$ be sequences of non-negative integers such that
  $\seqv{e} + \overleftarrow{\seqv{r}}$ is a filtration.
  Let $A_{\nu+1}$ be the quiver
  \[
  0 \rightarrow 1 \rightarrow 2 \rightarrow 3 \rightarrow \dotsm \rightarrow \nu.
  \]
  Then define
  \[
  \xymatrix{
  X^{\seqv{r}, \seqv{e}} := &
  k^{e^{\nu}+r^0} \ar@{->>}[r] &k^{e^{\nu-1}+r^1} \ar@{->>}[r] & \cdots \ar@{->>}[r] & k^{e^1 + r^{\nu-1}}\ar@{->>}[r]  & k^{e^0 + r^\nu}.
  }
  \]
\end{definition}
\begin{remark}
    The $k$-representation $X^{\seqv{r}, \seqv{e}} \in \repK{A_{\nu +1}}{k}$ is injective and its
    isomorphism class does not depend on the choice of the surjections.
\end{remark}
\begin{lemma}
  Let $\seqv{e} = (e^0, e^1, \dots, e^\nu)$
  and $\seqv{r} = (r^0, r^1, \dots , r^\nu)$ be sequences of non-negative integers such that
  $\seqv{e} + \overleftarrow{\seqv{r}}$ is a filtration.
  Then $X^{\seqv{r}, \seqv{e}}$ has
  a subrepresentation of dimension vector $\seqv{r}$ if and only if $\seqv{e}$ is a filtration of $e^\nu$
  \label{lmm:flagfib_not_empty}. Moreover, if $k$ is a finite field with $q$ elements, then
  the number of $k$-subrepresentations is given by
  \[
  \# \Gr[A_{\nu+1}]{\seqv{r}}{X^{\seqv{r},\seqv{e}}} = \prod_{i=0}^\nu \qbinom{e^{\nu-i} - e^{\nu-i-1} + r^i}{r^i}_q.
  \]
  In particular, this number is equal to $1$ modulo $q$ if and only if the set of subrepresentations is non-empty.
\end{lemma}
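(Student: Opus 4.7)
The plan is to count subrepresentations of $X^{\seqv{r},\seqv{e}}$ of dimension vector $\seqv{r}$ by a downward induction on the vertices of $A_{\nu+1}$, exploiting that all structure maps $\pi_i\colon k^{d_i}\twoheadrightarrow k^{d_{i+1}}$ are surjective, where $d_i:=e^{\nu-i}+r^i$. The hypothesis that $\seqv{e}+\overleftarrow{\seqv{r}}$ is a filtration in particular forces $e^0+r^\nu=0$, hence $e^0=r^\nu=0$, so $d_\nu=0$ and any subrepresentation must have $U^\nu=0$ automatically.

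A subrepresentation of dimension vector $\seqv{r}$ is a family $(U^i)$ with $\dim U^i=r^i$ satisfying the equivalent condition $U^i\subseteq\pi_i^{-1}(U^{i+1})$. I would construct and count these from the right. Having fixed $U^{i+1}$ of dimension $r^{i+1}$, the preimage $\pi_i^{-1}(U^{i+1})$ has dimension $r^{i+1}+(d_i-d_{i+1})=r^i+e^{\nu-i}-e^{\nu-i-1}$, a number which depends only on $r^{i+1}$ and not on the specific subspace $U^{i+1}$. Hence the number of admissible $U^i\subseteq\pi_i^{-1}(U^{i+1})$ over $\F_q$ is the Gaussian binomial
\[
\qbinom{e^{\nu-i}-e^{\nu-i-1}+r^i}{r^i}_q,
\]
independently of the earlier choices. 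Multiplying these counts for $i=0,\dots,\nu-1$ yields the claimed formula; the $i=\nu$ factor may be included harmlessly as $\qbinom{\,\cdot\,}{0}_q=1$ because $r^\nu=0$.

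The remaining two statements follow by inspecting this product. Such a subrepresentation exists iff each factor is nonzero, which is the condition $r^i\le e^{\nu-i}-e^{\nu-i-1}+r^i$, i.e.\ $e^{\nu-i}\ge e^{\nu-i-1}$ for $0\le i\le\nu-1$; combined with the already-forced $e^0=0$, this is exactly the statement that $\seqv{e}$ is a filtration of $e^\nu$. The mod-$q$ claim is then immediate from the elementary fact that $[n]_q\equiv 1\pmod q$ for $n\ge 1$, so $\qbinom{n}{k}_q\equiv 1\pmod q$ whenever $0\le k\le n$, while $\qbinom{n}{k}_q=0$ for $k>n$. The product is therefore $\equiv 1\pmod q$ precisely when no factor vanishes, i.e.\ precisely when the subrepresentation set is non-empty. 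The only subtle point is the independence of the count at level $i$ on the specific choice of $U^{i+1}$, which makes the levelwise counts decouple into a clean product; this independence is not a feature of flag counts in general, but here it is forced by surjectivity of $\pi_i$, so I do not anticipate a real obstacle.
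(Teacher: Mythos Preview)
Your argument is correct and is essentially the paper's own proof: both count subrepresentations by building them from the right, using surjectivity of the structure maps to see that the preimage $\pi_i^{-1}(U^{i+1})$ has dimension $e^{\nu-i}-e^{\nu-i-1}+r^i$ independently of the choice of $U^{i+1}$, so the level-wise counts multiply. The paper packages this as a formal induction on $\nu$ (peeling off $U^0$ last), while you phrase it as an iterative count, but the content is identical; your observation that the filtration hypothesis forces $e^0=r^\nu=0$ and hence $d_\nu=0$ is a clean way to anchor the recursion at the right end.
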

\begin{proof}
  We prove this by induction on $\nu$.
\begin{description}
\item[$\nu=0$]
  There is a subspace of dimension $r^0$ of $k^{r^0 + e^0}$ if and only if $e^0 \ge 0$ and, for
  $k$ a finite field of cardinality $q$, the number of
  those is obviously $\qbinom{e^0 + r^0}{r^0}_q$.
\item[$\nu \ge 1$]
    If $(U^0,U^1,U^2, \dots, U^\nu)$ is a subrepresentation of dimension vector $\seqv{r}$
    of $X^{\seqv{r}, \seqv{e}}$,
then
$(U^1,U^2, \dots, U^\nu)$ is a subrepresentation of dimension vector $(r^1, r^2,\dots, r^\nu)$ of
$X^{(r^1, r^2, \dots, r^\nu), (e^0, e^1, \dots, e^{\nu-1})}$. Therefore, by induction,
$e^i \le e^{i+1}$ for $0 \le i < \nu-1$ and $0 \le e^{0}$. The preimage $V$ of $U^1$ under the surjection from $U^0$ has
dimension $r^1 + ( (e^{\nu} + r^0) - (e^{\nu-1} + r^1))= e^\nu - e^{\nu-1} + r^0$. Since $\seqv{U}$ is a subrepresentation, we must have that
$U^0 \subset V$. Therefore, $r^0 \le e^\nu - e^{\nu-1} + r^0$ or equivalently $e^{\nu -1} \le e^\nu$.

On the other hand, if $e^0 \ge 0$ and $e^i \le e^{i+1}$ for all $0 \le i < \nu$, then there is a subrepresentation
$(U^1,U^2, \dots, U^\nu)$ of $X^{(r^1, r^2, \dots, r^\nu), (e^0, e^1, \dots, e^{\nu-1})}$ of dimension vector $(r^1, r^2,\dots, r^\nu)$ by induction.
As before, the dimension of the preimage $V$ of $U^1$ under the surjection from $U^0$ has dimension
$e^\nu - e^{\nu-1} + r^0 \ge r^0$. If we choose now any subspace $U^0$ of dimension $r^0$ in $V$, then we obtain
a subrepresentation
of $X^{\seqv{r}, \seqv{e}}$ of dimension vector $\seqv{r}$.

If $k$ is a finite field of cardinality $q$, then, by induction, we have that
the number of subrepresentations
of $X^{(r^1, r^2, \dots, r^\nu), (e^0, e^1, \dots, e^{\nu-1})}$
of dimension vector $(r^1, r^2,\dots, r^\nu)$ is equal to
  \[
  \prod_{i=1}^\nu \qbinom{e^{\nu-i} - e^{\nu-i-1} + r^i}{r^i}_q.
  \]
  To complete such a subrepresentation to a subrepresentation of $X^{\seqv{r}, \seqv{e}}$ we
  have to choose an $r^0$-dimensional subspace of an $(e^{\nu} - e^{\nu-1} + r^0)$-dimensional
  space. Therefore, the number of subrepresentations is equal to
  \[
  \qbinom{e^{\nu} - e^{\nu-1} + r^0}{r^0}_q \prod_{i=1}^\nu \qbinom{e^{\nu-i} - e^{\nu-i-1} + r^i}{r^i}_q.
  \]
  This yields the claim.
  \end{description}
\end{proof}

\begin{theorem}
    Let $a$ be a sink of $Q$, $\flvec{d}$ a filtration, $\seqv{r}$ a $(\nu+1)$-tuple of non-negative integers
    and $M \in \Rep[Q]{\dvec{d}^\nu}$. Then
    \[\pi_a^{\seqv{r}} \colon
      \Fl[Q]{\seqv{\dvec{d}}}{M}\lrang{a}^{\seqv{r}} \rightarrow
      \Fl[Q]{\seqv{\dvec{d}} - \seqv{r} \epsilon_a}{\pi_a M}\lrang{a}\]
      is surjective and
      the fibre $(\pi_a^{\seqv{r}})^{-1}(\seqv{U})$
      over any $\seqv{U} \in \Fl[Q]{\seqv{\dvec{d}} - \seqv{r} \epsilon_a}{\pi_a M}\lrang{a}$
      is isomorphic to $\Gr[A_\nu]{\seqv{r}}{X^{\seqv{r}, \seqv{\dvec{e}}_a}}$,
      where $\dvec{e}^{\nu -i} := \dvec{d}^\nu - \dvec{d}^i$.
      In particular the number
      of points in the fibre only depends on $\seqv{r}$ and $\seqv{\dvec{d}}$ and not on $\seqv{U}$.
     \label{theom:qgrass_to_red_fib}
\end{theorem}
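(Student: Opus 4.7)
The plan is to proceed in three steps: first verify that $\pi_a^{\seqv{r}}$ is well-defined, then describe the fibres explicitly, and finally deduce surjectivity from lemma \ref{lmm:flagfib_not_empty}.

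For well-definedness, I would check that given $\seqv{U} \in \Fl[Q]{\seqv{\dvec{d}}}{M}\lrang{a}^{\seqv{r}}$, the tuple $\seqv{V}$ with $V^i_j := U^i_j$ for $j \neq a$ and $V^i_a := \Bild \phi^{U^i}_a$ is indeed a flag in $\pi_a M$ lying in the $\lrang{a}$-stratum. Since $a$ is a sink, every arrow $\alpha \colon j \to a$ starts at some $j \neq a$ and the $S_a^{r^\nu}$-summand of $M$ contributes nothing to $\Bild \phi^M_a$; hence $V^i_a \subset (\pi_a M)_a$, so $\seqv{V}$ really is a flag in $\pi_a M$. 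The dimension $\dim V^i_a = \rank \phi^{U^i}_a = d^i_a - r^i$ matches the prescribed dimension vectors, and $\Bild \phi^{V^i}_a = V^i_a$ holds by construction, placing $\seqv{V}$ in the $\lrang{a}$-stratum.

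Next, I would describe the fibre over a given $\seqv{V}$. A preimage $\seqv{U}$ is forced to satisfy $U^i_j = V^i_j$ for $j \neq a$, while at the vertex $a$ one is free to choose subspaces $V^i_a \subset U^i_a \subset M_a$ with $\dim U^i_a = d^i_a$, subject to $U^i_a \subset U^{i+1}_a$ and $U^\nu_a = M_a$. The constraint $\seqv{U} \in \lrang{a}^{\seqv{r}}$ is then automatic: since $U^i_j = V^i_j$ for $j \neq a$, the map $\phi^{U^i}_a$ factors through $\phi^{V^i}_a$, which by hypothesis surjects onto $V^i_a$. To recognise the parameter space, I would pass to quotients $\bar M^i := M_a / V^i_a$ and $W^i := U^i_a / V^i_a$, obtaining a sequence of surjections $\bar M^0 \twoheadrightarrow \bar M^1 \twoheadrightarrow \cdots \twoheadrightarrow \bar M^\nu$ in which $(W^0, \ldots, W^\nu)$ is a subrepresentation of dimension vector $\seqv{r}$. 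A direct calculation gives $\dim \bar M^i = d^\nu_a - d^i_a + r^i$, which matches the dimensions of $X^{\seqv{r}, \seqv{\dvec{e}}_a}$; since any two representations of the relevant linear quiver built from sequences of surjections with matching dimensions are isomorphic, the fibre bijects with $\Gr[A_{\nu+1}]{\seqv{r}}{X^{\seqv{r}, \seqv{\dvec{e}}_a}}$.

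Surjectivity then follows from lemma \ref{lmm:flagfib_not_empty}: because $\seqv{\dvec{d}}$ is a filtration, the sequence $e^j := d^\nu_a - d^{\nu-j}_a$ is non-decreasing from $0$ to $d^\nu_a$, hence $\seqv{e}_a$ is a filtration and the Grassmannian $\Gr[A_{\nu+1}]{\seqv{r}}{X^{\seqv{r}, \seqv{\dvec{e}}_a}}$ is non-empty. I do not foresee any deep obstacle here; the main nuisance will be bookkeeping — reconciling the reversed indexing in the definition of $X^{\seqv{r}, \seqv{e}}$ with the natural chain $\bar M^0 \twoheadrightarrow \cdots \twoheadrightarrow \bar M^\nu$, and checking cleanly that the assignments $\seqv{U} \mapsto \seqv{V}$ and the reconstruction via the $W^i$'s are mutually inverse.
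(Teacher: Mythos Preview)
Your proposal is correct and follows essentially the same route as the paper: identify the fibre by passing to the quotients $M_a/V^i_a$, recognise the resulting chain of surjections as isomorphic to $X^{\seqv{r},\seqv{e}_a}$, and invoke lemma \ref{lmm:flagfib_not_empty} for non-emptiness and hence surjectivity. The only addition is your explicit well-definedness check, which the paper leaves implicit in the definition of $\pi_a^{\seqv{r}}$.
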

\begin{proof}
  Fix a flag $\seqv{V} \in \Fl[Q]{\seqv{\dvec{d}} - \seqv{r} \epsilon_a}{\pi_a M}\lrang{a}$. Let
  $\seqv{U} \in \Fl[Q]{\seqv{\dvec{d}}}{M}\lrang{a}^{\seqv{r}}$. The flag $\seqv{U}$ is
  in $(\pi_a^{\seqv{r}})^{-1}(\seqv{V})$ if and only if $U_j^i = V_j^i$ for all $j \neq a$
  in which case $\Bild(\Phi^{U^i}_a) = V^i_a$.
  Therefore, we only have to choose $U^i_a \subset M_a$ such that $V^i_a \subseteq U^i_a$,
  $U^{i-1}_a \subset U^i_a$ and $\dim U^i_a = d^i_a$ for all $i$. This is the same as choosing
  $\overline{U^{i}_a} \subset M_a/V^i_a$ such that $\theta^i(\overline{U^{i-1}_a}) \subset \overline{U^i_a}$ and
  $\dim \overline{U^i_a} = r^i$ if we denote by $\theta^i \colon M_a/V^{i-1}_a \rightarrow M_a/V^i_a$ the
  canonical projection. This is equivalent to finding a subrepresentation of
  \[ M_a/V^0_a \rightarrow M_a/V^1_a \rightarrow \cdots \rightarrow M_a/V^\nu_a \]
  of dimension vector $\seqv{r}$. All the maps in this representation are surjective since $\seqv{V}$
  is a flag, therefore this representation
  of $A_{\nu+1}$ is isomorphic to
  $X^{\seqv{r}, \seqv{e}}$. Since $\flvec{d}$ is a filtration of $M$ we have that $\flvec{e}$ is a filtration.
  Therefore, $\Gr[A_\nu]{\seqv{r}}{X^{\seqv{r}, \seqv{\dvec{e}}_a}}$ is non-empty by lemma \ref{lmm:flagfib_not_empty}
  and $\pi_a^\seqv{r}$ is surjective.
\end{proof}
Now we are nearly ready to do reflections. The only thing left to define is what happens on a source. If $b$ is a source in
$Q$, then $b$ is a sink in $Q^{op}$, so we just dualise everything. Denote by $D:=\Hom_k(-,k)$.
\begin{definition}
  Let $\seqv{U} \in \Fl[Q]{\seqv{\dvec{d}}}{M}$ and let $\dvec{e}^{\nu-i} = \dvec{d}^\nu - \dvec{d}^i$.
  Then define
  \begin{alignat*}{2}
    \hat D &\colon\quad &
    \Fl[Q]{\seqv{\dvec{d}}}{M}&\rightarrow
    \Fl[Q^{op}]{\seqv{\dvec{e}}}{DM}\\
    &&\seqv{U} & \mapsto (\hat D (\seqv{U}))^i := \ker( DM \rightarrow D(U^i)) = D(M/U^i).
  \end{alignat*}
\end{definition}
\begin{remark}
    Obviously, $\hat D ^2 \cong \id$ and the map $\hat D$ is an isomorphism of varieties.
\end{remark}
\begin{definition}
  Let $b$ be a source, $\dvec{d}$ a dimension vector and $s$ an integer. Define
    \[ \Rep[Q]{\dvec{d}}\lrang{b}^{s} :=
    \Set{ M \in \Rep[Q]{\dvec{d}} | \dim \Hom(S_b, M) = s}.\]

    Let $\seqv{\dvec{d}}$ be a $(\nu+1)$-tuple of dimension vectors
    and
    $\seqv{r} = (r^0, r^1, \dots , r^\nu)$ be a $(\nu+1)$-tuple of integers.
    For each representation $M$ define
\[
\Fl[Q]{\seqv{\dvec{d}}}{M}\lrang{b}^{\seqv{r}} :=
\Set{ \seqv{U} \in \Fl[Q]{\seqv{\dvec{d}}}{M} |
    \dim \Hom(S_b, M/U^i) = r^i}.
\]

     Moreover, let
     \[ \Rep[Q]{\dvec{d}}\lrang{b} := \Rep[Q]{\dvec{d}}\lrang{b}^0\] and
     \[ \Fl[Q]{\seqv{\dvec{d}}}{M}\lrang{b} := \Fl[Q]{\seqv{\dvec{d}}}{M}\lrang{b}^{\seqv{0}}.\]
\end{definition}
%
%
\begin{remark}
    Note that $\seqv{U} \in \Fl[Q]{\seqv{\dvec{d}}}{M}\lrang{b}^{\seqv{r}}$ if and only if
    $\hat D \seqv{U} \in \Fl[Q^{op}]{\seqv{\dvec{e}}}{DM}\lrang{b}^{\seqv{r}}$.
\end{remark}
Now we state the main result on reflections.
\begin{theorem}
    Let $a$ be a sink of $Q$, $\flvec{d}$ be a filtration and $M \in \Rep[Q]{\dvec{d}^\nu}\lrang{a}$.
  The map
  \begin{alignat*}{2}
    S^+_a &\colon\quad & \Fl[Q]{\seqv{\dvec{d}}}{M}\lrang{a} &\rightarrow
    \Fl[\sigma_a Q]{\sigma_a \seqv{\dvec{d}}}{S^+_a M}\lrang{a}\\
    && \seqv{U} & \mapsto S^+_a \seqv{U}
  \end{alignat*}
  is an isomorphism of varieties with inverse $\hat D \circ S^+_a \circ \hat D = S^-_a$.
  \label{theom:reflflagiso}
\end{theorem}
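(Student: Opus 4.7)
The plan is to prove this by constructing $S^+_a$ as a morphism of schemes via its functor of points and then verifying invertibility using the natural isomorphism $S^-_a S^+_a X \cong X$ that already appeared in the classical setting. The hypothesis $\seqv{U} \in \Fl[Q]{\seqv{\dvec{d}}}{M}\lrang{a}$ means $\Hom(U^i, S_a) = 0$ for every $i$, i.e.\ each $\phi^{U^i}_a$ is surjective; this is the condition that makes BGP behave well.

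First I would define the map on $R$-valued points for an arbitrary $k$-algebra $R$: given $\seqv{U}$ a flag in $M \tensor R$, set $(S^+_a U^i)_j := U^i_j$ for $j \neq a$ and $(S^+_a U^i)_a := \ker \phi^{U^i}_a$, with the arrows reversed at $a$ defined exactly as in the classical BGP construction. The condition $U^i \in \lrang{a}$ and base change give that $\phi^{U^i}_a$ is an $R$-split surjection, so its kernel is a direct summand of the correct rank $(\sigma_a \dvec{d}^i)_a = \sum_{\alpha \colon j \to a} d^i_j - d^i_a$. The inclusions $S^+_a U^{i-1} \subset S^+_a U^i$ are exactly the arrow $f$ shown to be injective in the commutative diagram preceding the definition of $\pi_a^{\seqv{r}}$. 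Naturality in $R$ is automatic, so $S^+_a$ is a well-defined morphism of schemes.

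Next I need to verify the image lies in $\lrang{a}$ on the source side, i.e.\ $\Hom(S_a, (S^+_a M)/(S^+_a U^i)) = 0$. Since the short exact sequence $0 \to U^i \to M \to M/U^i \to 0$ and the vanishing $\Hom(M,S_a) = 0$ yield $\Hom(M/U^i, S_a) = 0$, the quotient $M/U^i$ also lies in $\lrang{a}$ on the sink side. Left exactness of $S^+_a$ (valid on $\lrang{a}$) gives an embedding $(S^+_a M)/(S^+_a U^i) \hookrightarrow S^+_a(M/U^i)$, and the target has no $S_a$-summand at the source by the standard lemma on $S^+_a$ applied to $M/U^i$. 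Hence $S^+_a$ factors through the target variety of the statement.

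Finally, for invertibility: $\hat D$ is an involutive isomorphism of varieties (as already remarked), and in $\sigma_a Q$ the vertex $a$ is a source, so $\hat D$ turns the problem into the sink case for $(\sigma_a Q)^{op} = \sigma_a(Q^{op})$. Applying the construction of the previous paragraphs to $\hat D \seqv{U}$ and then dualising back produces a morphism $\hat D \circ S^+_a \circ \hat D$ which, on the level of underlying representations, coincides with the classical $S^-_a$ by definition. The natural monomorphism $\iota_{a,-} \colon S^-_a S^+_a \Rightarrow \id$ is an isomorphism on representations without an $S_a$-summand (by the lemma quoted before the statement), and applying this at every step of the flag shows $S^-_a \circ S^+_a = \id$ and $S^+_a \circ S^-_a = \id$ on the relevant strata. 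The main technical obstacle is the scheme-theoretic bookkeeping in the first step, namely verifying that the pointwise kernel $\ker \phi^{U^i}_a$ really assembles into a locally free direct summand of the correct rank over arbitrary $R$; this is where the stratification by $\lrang{a}$ is essential, since without the surjectivity of $\phi^{U^i}_a$ the dimension of the kernel would jump and $S^+_a$ would not be a morphism of schemes.
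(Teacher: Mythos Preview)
Your proposal is correct and follows essentially the same approach as the paper. The only stylistic difference is that the paper packages your step~2 and the invertibility into a single $3\times 3$ diagram with exact columns: applying the snake lemma to
\[
\begin{CD}
0 @>>> (S^+_a U^i)_a @>>> \bigoplus U^i_j @>>> U^i_a @>>> 0\\
@. @VVV @VVV @VVV @.\\
0 @>>> (S^+_a M)_a @>>> \bigoplus M_j @>>> M_a @>>> 0\\
@. @VVV @VVV @VVV @.\\
0 @>>> (S^+_a M/S^+_a U^i)_a @>>> \bigoplus (M/U^i)_j @>>> (M/U^i)_a @>>> 0
\end{CD}
\]
gives exactness of the bottom row directly, which simultaneously shows that the quotient lies in $\lrang{a}$ and that $S^+_a(M/U^i) = (S^+_a M)/(S^+_a U^i)$, from which $\hat D \circ S^+_a \circ \hat D \circ S^+_a = \id$ can be read off. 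Your route via left exactness plus the embedding $(S^+_a M)/(S^+_a U^i)\hookrightarrow S^+_a(M/U^i)$ and the classical $\iota_{a,-}$ reaches the same conclusion; your attention to the $R$-valued points and the local freeness of $\ker\phi^{U^i}_a$ is in fact more careful than the paper, which simply asserts naturality.
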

\begin{proof}
  First, we show that $S^+_a \seqv{U}$ lies in the correct set. Let $\dvec{e}^{\nu-i} = \dvec{d}^\nu - \dvec{d}^i$.
For each $i$, we have the following commutative diagram with exact columns.
  \[
\begin{CD}
      @.          0        @. 0 @. 0 @. \\
    @.          @VVV                        @VVV           @VVV     @.\\
    0 @>>>  (S^+_a U^{i})_a      @>>> \bigoplus\limits_{j \rightarrow a} U^{i}_j @>{\phi^{U^{i}}_a}>> U^{i}_a @>>> 0\\
    @.          @VVV                        @VVV           @VVV     @.\\
  0 @>>> (S^+_a M)_a @>>> \bigoplus\limits_{j \rightarrow a} M_j @>\phi^{M}_a>> M_a @>>> 0\\
    @.          @VVV                        @VVV           @VVV     @.\\
   0 @>>> (S^+_a M / S^+_a U^{i})_a  @>>> \bigoplus\limits_{j \rightarrow a} (M/U^{i})_j @>\phi^{M/U^{i}}_a>> (M/U^{i})_a @>>> 0\\
    @.          @VVV                        @VVV           @VVV     @.\\
     @.          0        @. 0 @. 0 @.
\end{CD}
  \]
  The two top rows are exact since $\seqv{U} \in \Fl[Q]{\seqv{\dvec{d}}}{M}\lrang{a}$. By the snake lemma,
  we have that the
  bottom row is exact. Therefore, the map
  \[ (S^+_a M / S^+_a U^{i})_a \rightarrow \bigoplus_{j\rightarrow a} (M/U^{i})_j \]
  is injective and hence $S^+_a(M)/S^+_a(U^i) \in \Rep[\sigma_a Q]{\sigma_a(\dvec{e}^{\nu-i})}\lrang{a}$.
  The diagram also yields that $\hat D \circ S^+_a \circ \hat D \circ S^+_a = \id$.
  Since $S^+_a$ is a functor and all choices where natural, we have that $S^+_a$ gives
  a natural transformation between the functors of points of these two varieties. Therefore,
  it is a morphism of varieties.

  Dually, $S^+_a \circ \hat D \circ S^+_a \circ \hat D = \id$. This concludes the proof.
%
\end{proof}
\section{Reflection functors and Hall numbers}
First we recall the definition of the Hall algebra.
The main references for this are Ringel \cite{Ringel_hallalgsandquantumgroups}
and
Hubery \cite{Hubery_ringelhall}.
Let $k$ be a finite field. Let $M, N, X \in \repK{Q}{k}$ be three
$k$-representations of $Q$. Then define
\[ F^X_{M N} := \#\Set{ U \le X | U \text{ subrepresentation}, U \cong N, X/U \cong M }. \]
This is a finite number.

Let $\mathcal{H}_k(Q)$ be the $\Q$-vector space with basis $u_{[X]}$ where $[X]$ is
the isomorphism class of $X$. For convenience we write $u_X$ instead of $u_{[X]}$. Define
\[
u_{[M]} \diamond u_{[N]} := \sum_{[X]} F^{X}_{M N} u_{[X]}.
\]
Then $(\mathcal{H}(\mathcal{A}), +, \diamond)$ is an associative
$\Q$-algebra with unit $1=u_0$, the Ringel-Hall algebra
or just Hall algebra. The composition algebra
is the subalgebra $\mathcal{C}_k(Q)$ of $\mathcal{H}_k(Q)$ generated by
the simple objects without self-extensions. Note that $\mathcal{H}_k(Q)$ and
$\mathcal{C}_k(Q)$ are naturally graded by dimension vector.

To each vertex $i \in Q_0$ there is a simple object $S_i$
given by $(S_i)_i = k$, $(S_i)_j = 0$ for $i \neq j$ and $(S_i)_\alpha = 0$ for all $\alpha \in Q_1$.
We set $u_i := u_{S_i}$ for each $i \in Q_0$. If $w = (i_1, \dots, i_r)$ is
a word in vertices of $Q$, we define
\[ u_w := u_{1} \diamond \dots \diamond u_{r}.\]
By definition, there are numbers $F_w^X$ for each $k$-representation $X$ of $Q$ such that
\[ u_w = \sum_{X} F_w^X u_X. \]
\label{sec:reflhallnum}

Let $k=\F_q$ be the finite field with $q$ elements and
$Q$ a quiver. Let $w=(i_r, \dots, i_1)$ be a word in vertices of $Q$. Define a filtration $\flvec{d}(w)$ by letting
\[\dvec{d}(w)^k := \sum_{j=1}^k \epsilon_{i_k}.\]
Then we obviously have $F_w^X = \# \Fl[Q]{\flvec{d}(w)}{X}$. Therefore, coefficients
in the Hall algebra are closely related to counting points of quiver
flags over finite fields. In the following, we will use reflection functors
to simplify the problem of counting the number of points modulo $q$. As an
application, we will show that for a preprojective or preinjective representation $X$
we have that $\# \Fl[Q]{\flvec{d}}{X} = 1 \mod q$ if $\Fl[Q]{\flvec{d}}{X}$ is non-empty.

\begin{lemma}
    Let $a$ be a sink of $Q$, $k$ a field, $\flvec{d}$ a filtration and $M \in \RepK[Q]{\dvec{d}^\nu}{k}$.
    Then
	\[
    \# \Fl[Q]{\seqv{\dvec{d}}}{M}
	=\sum_{\seqv{r}\ge 0} \# \Gr[A_\nu]{\seqv{r}}{X^{\seqv{r}, \seqv{e}_a}}
    \# \Fl[Q]{\seqv{\dvec{d}} - \seqv{r} \epsilon_a}{\pi_a M}\lrang{a}
    \]
    (on both sides we possibly have $\infty$).

    Moreover, for each sequence of non-negative integers $\seqv{r}$, if
    $
    \Fl[Q]{\seqv{\dvec{d}} - \seqv{r} \epsilon_a}{\pi_a M}\lrang{a}
    $
    is non-empty,
    then so is
    $
    \Gr[A_\nu]{\seqv{r}}{X^{\seqv{r}, \seqv{e}_a}}.
    $
    \label{reflflag:lmm:decompflag}
\end{lemma}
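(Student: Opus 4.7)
The plan is to stratify $\Fl[Q]{\seqv{\dvec{d}}}{M}$ by the invariant $r^i := \dim \Hom(U^i, S_a)$ and apply Theorem \ref{theom:qgrass_to_red_fib} on each stratum. First, note that every flag $\seqv{U}$ belongs to exactly one piece $\Fl[Q]{\seqv{\dvec{d}}}{M}\lrang{a}^{\seqv{r}}$, since $\seqv{r}$ is determined by $\seqv{U}$. Because $0 \le r^i \le d_a^i \le d_a^\nu$, only finitely many sequences $\seqv{r}$ index a non-empty stratum, so the disjoint decomposition yields
\[
\# \Fl[Q]{\seqv{\dvec{d}}}{M} = \sum_{\seqv{r} \ge 0} \# \Fl[Q]{\seqv{\dvec{d}}}{M}\lrang{a}^{\seqv{r}},
\]
a well-defined identity of extended non-negative integers.

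For each $\seqv{r}$, I would then invoke Theorem \ref{theom:qgrass_to_red_fib}, which gives the surjection
\[
\pi_a^{\seqv{r}} \colon \Fl[Q]{\seqv{\dvec{d}}}{M}\lrang{a}^{\seqv{r}} \twoheadrightarrow \Fl[Q]{\seqv{\dvec{d}} - \seqv{r}\epsilon_a}{\pi_a M}\lrang{a}
\]
together with the fibre description: every fibre is isomorphic to the single variety $\Gr[A_\nu]{\seqv{r}}{X^{\seqv{r}, \seqv{e}_a}}$, so its cardinality does not depend on the base point. Therefore each stratum factorises as
\[
\# \Fl[Q]{\seqv{\dvec{d}}}{M}\lrang{a}^{\seqv{r}} = \# \Gr[A_\nu]{\seqv{r}}{X^{\seqv{r}, \seqv{e}_a}} \cdot \# \Fl[Q]{\seqv{\dvec{d}} - \seqv{r}\epsilon_a}{\pi_a M}\lrang{a},
\]
and plugging this into the stratification sum yields the first claim.

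For the second assertion, if $\Fl[Q]{\seqv{\dvec{d}} - \seqv{r}\epsilon_a}{\pi_a M}\lrang{a}$ is non-empty, pick a flag $\seqv{V}$ in it; surjectivity of $\pi_a^{\seqv{r}}$ (again from Theorem \ref{theom:qgrass_to_red_fib}) produces a preimage, and the fibre over $\seqv{V}$, being isomorphic to $\Gr[A_\nu]{\seqv{r}}{X^{\seqv{r}, \seqv{e}_a}}$, is non-empty. Hence the Grassmannian itself is non-empty. The hard part is essentially already done in Theorem \ref{theom:qgrass_to_red_fib}: the real content is that the fibres of $\pi_a^{\seqv{r}}$ have constant, explicitly described cardinality. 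Once that is in hand, the present lemma is pure bookkeeping, with the only non-trivial check being that the stratification by $\seqv{r}$ is exhaustive and disjoint, which is immediate from the definitions.
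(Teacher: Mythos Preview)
Your proof is correct and follows essentially the same approach as the paper: stratify $\Fl[Q]{\seqv{\dvec{d}}}{M}$ as the disjoint union $\coprod_{\seqv{r}\ge 0}\Fl[Q]{\seqv{\dvec{d}}}{M}\lrang{a}^{\seqv{r}}$, then apply Theorem~\ref{theom:qgrass_to_red_fib} to each stratum to obtain both the product formula and the non-emptiness statement. Your write-up is slightly more detailed (e.g.\ the bound $r^i\le d_a^i$ guaranteeing finitely many non-empty strata), but the argument is the same.
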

\begin{proof}
    We have that
    \[
    \Fl[Q]{\seqv{\dvec{d}}}{M} = \coprod_{\seqv{r}\ge 0} \Fl[Q]{\seqv{\dvec{d}}}{M}\lrang{a}^{\seqv{r}}.
    \]
    By theorem \ref{theom:qgrass_to_red_fib}, we have for each sequence of non-negative integers $\seqv{r}$ that
    \[
	\# \Fl[Q]{\seqv{\dvec{d}}}{M}\lrang{a}^{\seqv{r}}
	=
	\# \Gr[A_\nu]{\seqv{r}}{X^{\seqv{r}, \seqv{e}_a}} \# \Fl[Q]{\seqv{\dvec{d}} - \seqv{r} \epsilon_a}{\pi_a M}\lrang{a}.
    \]
    By the same theorem we have that if 
    $\Fl[Q]{\seqv{\dvec{d}} - \seqv{r} \epsilon_a}{\pi_a M}\lrang{a}$ is non-empty, then so
    is $\Gr[A_\nu]{\seqv{r}}{X^{\seqv{r}, \seqv{e}_a}}$.
\end{proof}
\begin{lemma}
    Let $a$ be a sink of $Q$, $k$ a field, $\flvec{d}$ a filtration
    and $M \in \RepK[Q]{\dvec{d}^\nu}{k}\lrang{a}^s$. Let $\seqv{r}_+= \seqv{r}_+(\flvec{d})$ be given as follows:
    \begin{align*}
        r^0_+ &:= 0; &\\
        r^i_+ &:= \max\{0, (\sigma_a(\dvec{d}^{i-1} - \dvec{d}^i))_a + r^{i-1}_+\}& \text{for } 0 < i < \nu;\\
        r^\nu_+ &:= s.
    \end{align*}
    Now let $\seqv{r}$ be a sequence of integers. If
    $\Fl[Q]{\seqv{\dvec{d}}}{M}\lrang{a}^{\seqv{r}}$ is non-empty, then $\seqv{r} \ge \seqv{r}_+$.
    \label{reflflag:lmm:rplusmin}
\end{lemma}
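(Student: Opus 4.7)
The plan is to proceed by induction on $i$, establishing $r^i \ge r^i_+$ for every $i$. The base case $i = 0$ is immediate since $U^0 = 0$ forces $r^0 = 0 = r^0_+$, and the case $i = \nu$ is covered by the hypothesis $M \in \RepK[Q]{\dvec{d}^\nu}{k}\lrang{a}^s$, which gives $U^\nu = M$ and therefore $r^\nu = s = r^\nu_+$.

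For the inductive step, I would first rewrite $r^i = \dim \Hom(U^i, S_a)$ in the more concrete form
\[
r^i = d^i_a - \rank \phi^{U^i}_a = d^i_a - \sum_{j \to a} d^i_j + \dim \Ker \phi^{U^i}_a.
\]
This is valid because, at the sink $a$, any morphism $U^i \to S_a$ is determined by a linear functional on $U^i_a$ which must vanish on $\Bild \phi^{U^i}_a$, so $\Hom(U^i, S_a) \cong \Cok \phi^{U^i}_a$.

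Next I would exploit the commutative square
\[
\begin{CD}
\bigoplus_{j\to a} U^{i-1}_j @>{\phi^{U^{i-1}}_a}>> U^{i-1}_a\\
@VVV @VVV\\
\bigoplus_{j\to a} U^{i}_j @>{\phi^{U^{i}}_a}>> U^{i}_a
\end{CD}
\]
induced by the inclusion $U^{i-1} \hookrightarrow U^i$. Both vertical maps are injective and $\phi^{U^{i-1}}_a$ is literally the restriction of $\phi^{U^i}_a$, so $\Ker \phi^{U^{i-1}}_a$ embeds into $\Ker \phi^{U^i}_a$; in particular $\dim \Ker \phi^{U^i}_a \ge \dim \Ker \phi^{U^{i-1}}_a$. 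Combining this with the displayed formula and the explicit identity $(\sigma_a \dvec{v})_a = -v_a + \sum_{j\to a} v_j$ (which follows from $\bform{\epsilon_a}{\dvec{v}}_Q = v_a$ at a sink), I obtain
\[
r^i - r^{i-1} \ge (\sigma_a(\dvec{d}^{i-1} - \dvec{d}^i))_a.
\]
Using the trivial bound $r^i \ge 0$ and the inductive hypothesis $r^{i-1} \ge r^{i-1}_+$, this yields $r^i \ge \max\{0,\; r^{i-1}_+ + (\sigma_a(\dvec{d}^{i-1} - \dvec{d}^i))_a\} = r^i_+$.

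I do not anticipate a real obstacle: once the cokernel interpretation of $r^i$ is in place, the argument reduces to a one-line diagram chase together with the explicit formula for $(\sigma_a \dvec{v})_a$ at a sink. The only bookkeeping subtlety is the asymmetric definition of $r^\nu_+$, but this is absorbed entirely by the hypothesis $r^\nu = s$.
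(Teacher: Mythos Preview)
Your proof is correct and follows essentially the same approach as the paper: both rewrite $r^i$ as $\dim\Ker\phi_a^{U^i}-(\sigma_a\dvec{d}^i)_a$, use the inclusion $\Ker\phi_a^{U^{i-1}}\hookrightarrow\Ker\phi_a^{U^i}$ to get the recursive inequality, and handle $i=0$ and $i=\nu$ separately. Your write-up is slightly more explicit (the cokernel interpretation of $\Hom(U^i,S_a)$ and the commutative square), but the argument is the same; the only nitpick is that the parenthetical justification of $(\sigma_a\dvec{v})_a=-v_a+\sum_{j\to a}v_j$ also needs $\bform{\dvec{v}}{\epsilon_a}_Q=v_a-\sum_{j\to a}v_j$, not just $\bform{\epsilon_a}{\dvec{v}}_Q=v_a$.
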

\begin{proof}
    Let $\seqv{U} \in \Fl[Q]{\seqv{\dvec{d}}}{M}\lrang{a}^{\seqv{r}}$. By definition,
    $\Hom(U^i, S_a) = r^i$. We have
    \[r^i= \codim \Bild \Phi_a^{U^i} = \dim \ker \Phi_a^{U^i} + d^i_a - \sum_{j \rightarrow a} d^i_j
    = \dim \ker \Phi_a^{U^i} - (\sigma_a \dvec{d}^i)_a.\]
    We prove $r^i \ge r_+^i$ by induction on $i$. For $i=0$ the claim
    is obviously true. Now let $0\le i \le \nu-2$. Obviously, $\dim \ker \Phi_a^{U^i} \le \dim \ker \Phi_a^{U^{i+1}}$.
    Therefore,
    \[ r^i_+  + (\sigma_a \dvec{d}^i)_a 
    \le r^i  + (\sigma_a \dvec{d}^i)_a= \dim \ker \Phi_a^{U^i} \le \dim \ker \Phi_a^{U^{i+1}} =
    r^{i+1} + (\sigma_a \dvec{d}^{i+1})_a.\]
    Hence, $r^{i+1} \ge \max\{0,(\sigma_a(\dvec{d}^{i} - \dvec{d}^{i+1}))_a + r^i_+\} = r^{i+1}_+$.

    For $r^\nu_+$ note that, by definition, $U^\nu = M$ and therefore
    \[ r^\nu =\codim \Bild \Phi_a^{U^\nu} = \codim \Bild \Phi_a^{M} = s.\]
\end{proof}
\begin{remark}
    Note that $\seqv{r}_+(\flvec{d}-\seqv{r}_+(\flvec{d}) \epsilon_a)=0$ since
    \[
    \sigma_a (\dvec{d}^i - \dvec{d}^{i-1})_a + r^i_+(\flvec{d}) - r^{i-1}_+(\flvec{d})
    \ge r^i_+(\flvec{d}) - r^i_+(\flvec{d}) = 0.
    \]
    For any filtration $\flvec{d}$ of some representation $M$ it is enough to remember the terms
    \[
    (\dvec{d}^1, \dots, \dvec{d}^{\nu-1})
    \]
    since we always have $\dvec{d}^0 = 0$
    and $\dvec{d}^\nu = \dimve M$. Note that the rule to construct $r_+^i$ for
    $0<i<\nu$
    depends neither on $\dvec{d}^0$ nor on $\dvec{d}^\nu$. Therefore, we
    can define
    \[S_a^+ \flvec{d} = S_a^+ (\dvec{d}^1, \dots, \dvec{d}^{\nu-1}) :=
    (\sigma_a \dvec{d}^1 + r_+^1 \epsilon_a, \dots, \sigma_a \dvec{d}^{\nu-1} + r_+^{\nu-1}). \]
    If $\flvec{d}$ is
    a filtration of $M$, then $S_a^+ \flvec{d}$ is a filtration of $S_a^+ M$ if and only if
    $(S_a^+ \flvec{d})^{\nu-1} \le \dimve S_a^+ M$.
    \label{reflflag:rem:reflfilt}
\end{remark}
\begin{corollary}
    Let $a$ be a sink, $k$ a field, $\flvec{d}$ a filtration and $M \in \RepK[Q]{\dvec{d}^\nu}{k}\lrang{a}^s$.
    Then
	\[
    \# \Fl[Q]{\seqv{\dvec{d}}}{M}
    =\sum_{\seqv{r}\ge 0} \# \Gr[A_\nu]{\seqv{r}+\seqv{r}_+}{X^{\seqv{r}+\seqv{r}_+, \seqv{e}_a}}
    \# \Fl[\sigma_a Q]{S_a^+\seqv{\dvec{d}}+ \seqv{r}\epsilon_a}{S_a^+ M}\lrang{a}.
    \]

    In particular, if $k$ is a finite field of cardinality $q$, we have
    \[
    \# \Fl[Q]{\seqv{\dvec{d}}}{M}
    =\sum_{\seqv{r}\ge 0}
    \# \Fl[\sigma_a Q]{S_a^+\seqv{\dvec{d}}+\seqv{r}\epsilon_a}{S_a^+ M}\lrang{a} \mod q.
    \]
    \label{reflflag:cory:decompreflmodq}
\end{corollary}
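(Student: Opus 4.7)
The proof is essentially a bookkeeping exercise: I would assemble Lemma \ref{reflflag:lmm:decompflag}, Lemma \ref{reflflag:lmm:rplusmin}, and Theorem \ref{theom:reflflagiso} into a single identity. First, I would invoke Lemma \ref{reflflag:lmm:decompflag} to write
\[
\# \Fl[Q]{\seqv{\dvec{d}}}{M} = \sum_{\seqv{r}\ge 0} \# \Gr[A_\nu]{\seqv{r}}{X^{\seqv{r}, \seqv{e}_a}} \cdot \# \Fl[Q]{\seqv{\dvec{d}} - \seqv{r}\epsilon_a}{\pi_a M}\lrang{a}.
\]
Next, Lemma \ref{reflflag:lmm:rplusmin} tells me that the second factor vanishes whenever $\seqv{r}\not\ge\seqv{r}_+$, so I may reindex the sum by substituting $\seqv{r} = \seqv{r}_+ + \seqv{r}'$ with $\seqv{r}'\ge 0$. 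Note that the endpoint constraints $r_+^0 = 0$ and $r_+^\nu = s$ together with $r^0 = \dim\Hom(0,S_a) = 0$ and the equality $r^\nu = s$ derived in the proof of Lemma \ref{reflflag:lmm:rplusmin} force $r^{\prime 0} = r^{\prime \nu} = 0$, matching the fact that $S_a^+\flvec{d}$ in Remark \ref{reflflag:rem:reflfilt} only records interior terms.

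I would then apply Theorem \ref{theom:reflflagiso} to each surviving flag factor, obtaining an isomorphism
\[
\Fl[Q]{\flvec{d} - \seqv{r}\epsilon_a}{\pi_a M}\lrang{a} \cong \Fl[\sigma_a Q]{\sigma_a(\flvec{d} - \seqv{r}\epsilon_a)}{S_a^+ \pi_a M}\lrang{a}.
\]
Since $S_a^+ S_a = 0$ and $M \cong \pi_a M \oplus S_a^s$ by definition of $\pi_a$, we have $S_a^+ \pi_a M = S_a^+ M$. Using that $a$ carries no loop, $\sigma_a(\epsilon_a) = -\epsilon_a$, so the interior terms transform as
\[
\sigma_a(\dvec{d}^i - r^i\epsilon_a) = \sigma_a \dvec{d}^i + r^i \epsilon_a = \sigma_a \dvec{d}^i + r_+^i \epsilon_a + r^{\prime i}\epsilon_a = (S_a^+\flvec{d})^i + r^{\prime i}\epsilon_a.
\]
Relabeling $\seqv{r}'$ back to $\seqv{r}$ yields the first equation of the corollary.

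For the mod $q$ statement, Lemma \ref{lmm:flagfib_not_empty} expresses $\# \Gr[A_\nu]{\seqv{r}+\seqv{r}_+}{X^{\seqv{r}+\seqv{r}_+, \seqv{e}_a}}$ as a product of Gaussian binomial coefficients $\qbinom{n}{k}_q$, each of which reduces to $1$ modulo $q$ whenever nonzero (every factor $q^j - 1 \equiv -1 \pmod q$, so the ratios collapse). When this Grassmannian count vanishes, the second assertion of Lemma \ref{reflflag:lmm:decompflag} (combined with the isomorphism of Theorem \ref{theom:reflflagiso}) forces the companion flag factor to vanish as well, so the missing term drops out of both sides consistently. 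The only genuine obstacle is the careful reindexing at the endpoints, since $S_a^+\flvec{d}$ is specified only on interior positions and one has to verify that the boundary values $\dvec{d}^0 = 0$ and $\dvec{d}^\nu$ transport correctly under the replacement of $\pi_a M$ by $S_a^+ M$; this is what makes the shift by $\seqv{r}_+$ (rather than by $\seqv{r}$ itself) the natural one.
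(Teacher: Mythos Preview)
Your proposal is correct and follows essentially the same approach as the paper's own proof: both combine Lemma~\ref{reflflag:lmm:decompflag} and Lemma~\ref{reflflag:lmm:rplusmin} to reindex the sum by $\seqv{r}+\seqv{r}_+$, then invoke Theorem~\ref{theom:reflflagiso} together with the identity $S_a^+\pi_a M = S_a^+ M$ (which the paper leaves implicit but you spell out), and finally use Lemma~\ref{lmm:flagfib_not_empty} plus the second assertion of Lemma~\ref{reflflag:lmm:decompflag} for the mod~$q$ reduction. Your additional remarks on the endpoint indices $r^{\prime 0}=r^{\prime\nu}=0$ are not in the paper's version but are a helpful clarification.
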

\begin{proof}
    By lemmas \ref{reflflag:lmm:decompflag} and \ref{reflflag:lmm:rplusmin} we obtain
    that
	\[
    \# \Fl[Q]{\seqv{\dvec{d}}}{M}
    =\sum_{\seqv{r}\ge 0} \# \Gr[A_\nu]{\seqv{r}+\seqv{r}_+}{X^{\seqv{r}+\seqv{r}_+, \seqv{e}_a}}
    \# \Fl[Q]{\flvec{d} - (\seqv{r} + \seqv{r}_+)\epsilon_a}{\pi^s_a M}\lrang{a}.
    \]
    Note that $\sigma_a (\dvec{d}^i - (r^i+r^i_+)\epsilon_a) =
    (S_a^+\flvec{d})^i + r^i \epsilon_a$ for all $0 < i < \nu$. Therefore, theorem \ref{theom:reflflagiso}
    yields that
    \[
    \Fl[Q]{\flvec{d} - (\seqv{r} + \seqv{r}_+)\epsilon_a}{\pi^s_a M}\lrang{a} \cong
    \Fl[\sigma_a Q]{S_a^+\flvec{d} + \seqv{r}\epsilon_a}{S_a^+ M}\lrang{a}.
    \]
    This proves the first claim.

    Now let $k$ be a finite field of cardinality $q$. If
    $\Gr[A_\nu]{\seqv{r}+\seqv{r}_+}{X^{\seqv{r}+\seqv{r}_+, \seqv{e}_a}}$
    is non-empty, then its number is one modulo $q$ by lemma \ref{lmm:flagfib_not_empty}.
    The second part of lemma \ref{reflflag:lmm:decompflag} yields that whenever
    $\Fl[Q]{\flvec{d} - (\seqv{r} + \seqv{r}_+)\epsilon_a}{\pi^s_a M}\lrang{a}$
    is non-empty, then $\Gr[A_\nu]{\seqv{r}+\seqv{r}_+}{X^{\seqv{r}+\seqv{r}_+, \seqv{e}_a}}$
    is non-empty. This finishes the proof.
\end{proof}
We obtain the following.
\begin{theorem}
    \label{theom:reflflagmodq}
    Let $a$ be a sink, $k$ a field, $\flvec{d}$ a filtration and $M \in \RepK[Q]{\dvec{d}^\nu}{k}\lrang{a}^s$.
	Then $\Fl[Q]{\seqv{\dvec{d}}}{M}$ is empty if and only if
    $\Fl[\sigma_a Q]{S^+_a \flvec{d}}{S^+_a M}$ is.

	Moreover, if $k=\F_q$ is a finite field, then
    \[
	\# \Fl[Q]{\seqv{\dvec{d}}}{M}=
	\# \Fl[\sigma_a Q]{S^+_a \flvec{d}}{S^+_a M} \mod q.
    \]
\end{theorem}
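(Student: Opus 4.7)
The strategy is to express both sides of the desired congruence as the same sum modulo $q$.

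By Corollary \ref{reflflag:cory:decompreflmodq} applied directly to $(Q, \flvec{d}, M, a)$,
\[
\# \Fl[Q]{\flvec{d}}{M} \equiv \sum_{\seqv{r} \ge 0} \# \Fl[\sigma_a Q]{S_a^+\flvec{d} + \seqv{r}\epsilon_a}{S_a^+M}\lrang{a} \pmod{q}.
\]
It therefore suffices to show that the same sum also represents $\# \Fl[\sigma_a Q]{S_a^+\flvec{d}}{S_a^+M}$ modulo $q$.

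To this end I would first establish a source-analog of Corollary \ref{reflflag:cory:decompreflmodq}: for any source $b$ of any quiver $Q'$, any filtration $\flvec{d}'$, and any representation $N$,
\[
\# \Fl[Q']{\flvec{d}'}{N} \equiv \sum_{\seqv{s} \ge 0} \# \Fl[Q']{\flvec{d}' + \seqv{s}\epsilon_b}{N}\lrang{b} \pmod{q},
\]
where the strata $\lrang{b}^{\seqv{s}}$ are defined via $\Hom(S_b, N/V^i)$. The proof is via $\hat D$-duality: $\hat D$ carries the source-stratification of $\Fl[Q']{\flvec{d}'}{N}$ bijectively onto the sink-stratification of $\Fl[(Q')^{op}]{\overline{\flvec{d}'}}{DN}$, to which Theorem \ref{theom:qgrass_to_red_fib} applies. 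The resulting Grassmannian fibres of type $A_\nu$ have counts $\equiv 1 \pmod{q}$ by Lemma \ref{lmm:flagfib_not_empty}, so the mod-$q$ count survives; unpacking through the second $\hat D$ (using $\hat D^2 = \id$) then yields the displayed source-analog with the enlarged filtration $\flvec{d}' + \seqv{s}\epsilon_b$ appearing on the right.

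Specializing this source-analog to $(\sigma_a Q, S_a^+\flvec{d}, S_a^+M, a)$, where $a$ is a source of $\sigma_a Q$, gives
\[
\# \Fl[\sigma_a Q]{S_a^+\flvec{d}}{S_a^+M} \equiv \sum_{\seqv{s} \ge 0} \# \Fl[\sigma_a Q]{S_a^+\flvec{d} + \seqv{s}\epsilon_a}{S_a^+M}\lrang{a} \pmod{q},
\]
which matches the first decomposition term-by-term. Combining the two congruences yields the desired mod-$q$ equality. The emptiness statement follows in parallel from the non-empty-fibre part of Theorem \ref{theom:qgrass_to_red_fib} and its source dual: $\Fl[Q]{\flvec{d}}{M}$ is empty iff every summand $\Fl[\sigma_a Q]{S_a^+\flvec{d} + \seqv{r}\epsilon_a}{S_a^+M}\lrang{a}$ is empty, iff $\Fl[\sigma_a Q]{S_a^+\flvec{d}}{S_a^+M}$ is empty.

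The main technical obstacle is establishing the source-analog precisely. One must carefully track how the filtration $\flvec{d}'$ transforms under $\hat D$ (into its reverse-complement) and back, and verify that the stratum $\lrang{b}^{\seqv{s}}$ in $Q'$ corresponds under this round-trip to the stratum in $(Q')^{op}$ to which Theorem \ref{theom:qgrass_to_red_fib} assigns the enlarged filtration $\flvec{d}' + \seqv{s}\epsilon_b$. Geometrically, the correspondence sends a flag $\seqv{V}$ with $\dim\Hom(S_b, N/V^i) = s^i$ to the flag obtained by replacing each $V^i$ at vertex $b$ with the preimage of the maximal $S_b$-submodule of $N/V^i$; the fibres of this enlargement are the predicted $A_{\nu+1}$-Grassmannians with counts $\equiv 1 \pmod{q}$.
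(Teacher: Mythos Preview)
Your approach is essentially the paper's own: both sides are expressed as the same sum $\sum_{\seqv{r}\ge 0}\#\Fl[\sigma_a Q]{S_a^+\flvec{d}+\seqv{r}\epsilon_a}{S_a^+M}\lrang{a}$ modulo $q$, the left via Corollary~\ref{reflflag:cory:decompreflmodq} and the right by passing through $\hat D$ to $\sigma_a Q^{op}$, applying the sink-side decomposition (Lemma~\ref{reflflag:lmm:decompflag}) there, and returning via $\hat D$. The paper does this $\hat D$ round-trip inline rather than packaging it as a separate source-analog, but the content is identical.

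Two small points to tighten. First, your source-analog as stated for \emph{arbitrary} $N$ is not quite right: the dual of Lemma~\ref{reflflag:lmm:decompflag} produces $\pi_b N$ (remove the $S_b$-socle) on the right, not $N$; it is only because $\Hom(S_a, S_a^+M)=0$ that $\pi_a(S_a^+M)=S_a^+M$ in your application. Second, the paper handles separately the degenerate case where $(S_a^+\flvec{d})^{\nu-1}\not\le\dimve S_a^+M$, so that $S_a^+\flvec{d}$ is not a filtration of $S_a^+M$; there every term $\Fl[\sigma_a Q]{S_a^+\flvec{d}+\seqv{r}\epsilon_a}{S_a^+M}\lrang{a}$ is empty and both flag varieties are empty. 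You should address this case explicitly before invoking the source-analog.
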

\begin{proof}
    By corollary \ref{reflflag:cory:decompreflmodq} we obtain that
	\[
    \# \Fl[Q]{\seqv{\dvec{d}}}{M}
    =\sum_{\seqv{r}\ge 0} \# \Gr[A_\nu]{\seqv{r}+\seqv{r}_+}{X^{\seqv{r}+\seqv{r}_+, \seqv{e}_a}}
    \# \Fl[\sigma_a Q]{S_a^+\seqv{\dvec{d}}+ \seqv{r}\epsilon_a}{S_a^+ M}\lrang{a}.
    \]
    Note that $S_a^+ \flvec{d}$ is a filtration of $S_a^+M$ if and only if
    $(S_a^+ \flvec{d})^{\nu-1} \le \dimve S_a^+ M$. Therefore, if $S_a^+ \flvec{d}$
    is not a filtration of $S_a^+ M$, then 
    each $\Fl[\sigma_a Q]{S_a^+\seqv{\dvec{d}}+ \seqv{r}\epsilon_a}{S_a^+ M}\lrang{a}$
    is empty for all $\seqv{r} \ge 0$ and hence, so is 
    $\Fl[Q]{\seqv{\dvec{d}}}{M}$. In this case, we also have that
    $\Fl[\sigma_a Q]{S^+_a \flvec{d}}{S^+_a M}$ is empty. Both claims follow.

    Assume now that $S_a^+ \flvec{d}$ is a filtration of $S_a^+M$.
    We have that
    \[\Fl[\sigma_a Q]{S^+_a \flvec{d}}{S^+_a M} \cong \Fl[\sigma_a Q^{op}]{\dimve S^+_a M - \rever{S^+_a \flvec{d}}}{DS^+_a M}\]
    via $\hat D$. Let $\flvec{f}:=\dimve S^+_a M - \rever{S^+_a \flvec{d}}$. By using lemma
    \ref{reflflag:lmm:decompflag} we obtain that
    \[
    \# \Fl[\sigma_a Q^{op}]{\flvec{f}}{DS^+_a M} 
    =\sum_{\seqv{r}\ge 0} \# \Gr[A_\nu]{\rever{\seqv{r}}}{X^{\rever{\seqv{r}}, (S^+_a\flvec{d})_a}}
    \# \Fl[\sigma_a Q^{op}]{\flvec{f} - \rever{\seqv{r}} \epsilon_a}{D S_a^+ M}\lrang{a}.
    \]
    Moreover, by the same lemma we have for each $\seqv{r}\ge 0$ that if
    $\Fl[\sigma_a Q^{op}]{\flvec{f} - \rever{\seqv{r}} \epsilon_a}{D S_a^+ M}\lrang{a}$ is
    non-empty, then $\Gr[A_\nu]{\rever{\seqv{r}}}{X^{\rever{\seqv{r}}, (S^+_a\flvec{d})_a}}$ is non-empty.
    Using $\hat D$ yields
    \[
    \Fl[\sigma_a Q^{op}]{\flvec{f} - \rever{\seqv{r}} \epsilon_a}{D S_a^+ M}\lrang{a}
    \cong \Fl[\sigma_a Q]{S_a^+\flvec{d} + \seqv{r}\epsilon_a}{S_a^+ M}\lrang{a}.
    \]
    Combining these equalities, we have that
    \[
    \# \Fl[\sigma_a Q]{S^+_a \flvec{d}}{S^+_a M}
    =\sum_{\seqv{r}\ge 0} \# \Gr[A_\nu]{\rever{\seqv{r}}}{X^{\rever{\seqv{r}}, (S^+_a\flvec{d})_a}}
    \# \Fl[\sigma_a Q]{S_a^+\flvec{d} + \seqv{r}\epsilon_a}{S_a^+ M}\lrang{a}.
    \]
    Therefore, $\Fl[\sigma_a Q]{S^+_a \flvec{d}}{S^+_a M}$ is empty if and only if for all
    $\seqv{r} \ge 0$ we have that the variety $\Fl[\sigma_a Q]{S_a^+\flvec{d} + \seqv{r}\epsilon_a}{S_a^+ M}\lrang{a}$
    is empty. The same is true for $\Fl[Q]{\seqv{\flvec{d}}}{M}$ and this proves the first claim.

    Now let $k$ be  a finite field with $q$ elements. By the first part, if 
    $\Fl[\sigma_a Q]{S_a^+\flvec{d} + \seqv{r} \epsilon_a}{S_a^+ M}\lrang{a}$
    is non-empty, then
    $\Gr[A_\nu]{\rever{\seqv{r}}}{X^{\rever{\seqv{r}}, (S^+_a\flvec{d})_a}}$ is non-empty.
    Therefore, lemma \ref{lmm:flagfib_not_empty} yields that
    \[\# \Fl[\sigma_a Q]{S^+_a \flvec{d}}{S^+_a M}
    =\sum_{\seqv{r}\ge 0}
    \# \Fl[\sigma_a Q]{S_a^+\flvec{d} + \seqv{r}\epsilon_a}{S_a^+ M}\lrang{a} \mod q.\]
    By corollary \ref{reflflag:cory:decompreflmodq} this is equal to
    $\# \Fl[Q]{\flvec{d}}{M}$. This finishes the proof.
\end{proof}

\begin{remark}
    The Coxeter functor $C^+$ is by definition the composition of reflection functors
    associated to an admissible ordering
    $(a_1, \dots, a_n)$ of $Q$. The action on a filtration, which we also denote by $C^+$, is
    given by $C^+ \flvec{d} := S^+_{a_n} \dots S^+_{a_1} \flvec{d}$. It is not clear
    that $C^+$ on a filtration does not depend on the choice of the admissible ordering.
\end{remark}
We immediately obtain the following.
\begin{corollary}
    \label{reflflags:cory:preprojone}
    Let $M$ be a preprojective $k$-representation and
    let $\flvec{d}$ be a filtration of $\dimve M$. Take $r\ge0$ such that
    $(C^+)^r M = 0$.
    
    Then
    $\Fl[Q]{\seqv{\dvec{d}}}{M}$ is non-empty if and only if
    we have that $(C^+)^r \flvec{d} =0$ and that for every intermediate sequence $w$ of admissible sink reflections
    $S^+_w \flvec{d}$ is a filtration of $S^+_w M$. In particular, this depends only
    on the isomorphism class of $M$ and the filtration $\flvec{d}$, but not on the choice
    of $M$ or the field $k$.

    Moreover, if $k$ is a finite field with $q$ elements, then
    $\Fl[Q]{\seqv{\dvec{d}}}{M}$ non-empty implies that
    \[\# \Fl[Q]{\seqv{\dvec{d}}}{M} = 1 \mod q.\]
\end{corollary}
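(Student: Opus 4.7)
The plan is to induct on the smallest $r \ge 0$ with $(C^+)^r M = 0$, peeling off one admissible sink reflection at a time using Theorem \ref{theom:reflflagmodq}.

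For the base case $r=0$ we have $M=0$, so $\dvec{d}^\nu=0$ forces $\flvec{d}$ to be the trivial filtration. Then $\Fl[Q]{\flvec{d}}{M}$ is a single point, which is non-empty and has one $k$-valued point, congruent to $1$ modulo $q$; and $(C^+)^0 \flvec{d} = 0$, so both sides of the claimed equivalence are automatically satisfied.

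For the inductive step, I would fix an admissible ordering $(a_1,\dots,a_n)$ of $Q$ and apply the reflections one by one. At the $i$-th step, $a_i$ is a sink of $\sigma_{a_{i-1}}\cdots\sigma_{a_1}Q$, so Theorem \ref{theom:reflflagmodq} applies to the pair $(S^+_{a_{i-1}}\cdots S^+_{a_1} M,\; S^+_{a_{i-1}}\cdots S^+_{a_1}\flvec{d})$. The theorem yields two things at each step: first, the flag variety is empty if and only if the reflected one is; second, the two counts agree modulo $q$ (when $k=\F_q$). If at some intermediate stage $w$ the tuple $S^+_w \flvec{d}$ fails the dimension inequality required to be a filtration of $S^+_w M$ (see Remark \ref{reflflag:rem:reflfilt}), then the corresponding flag variety is vacuously empty, and by iterating the emptiness equivalence we conclude $\Fl[Q]{\flvec{d}}{M}$ is empty — this matches one failure mode of the claimed condition. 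Otherwise, after the full block of $n$ reflections we land at $(C^+ M, C^+\flvec{d})$ on the original quiver $Q$, and since $(C^+)^{r-1}$ kills $C^+M$, the inductive hypothesis applies: non-emptiness of $\Fl[Q]{C^+\flvec{d}}{C^+M}$ is equivalent to $(C^+)^{r-1}(C^+\flvec{d}) = (C^+)^r\flvec{d}=0$ together with all further intermediate $S^+_w$ images being filtrations, and the count there is $1$ modulo $q$. Combined with the preservation along the $n$ reflection steps, this gives exactly the stated characterisation, and the count of $\Fl[Q]{\flvec{d}}{M}$ modulo $q$ is $1$.

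The field independence follows because each of the operations $S^+_a$ on $\flvec{d}$ is defined purely combinatorially in terms of dimension vectors (Remark \ref{reflflag:rem:reflfilt}), and because isomorphism classes of indecomposable preprojective representations are parametrised by dimension vectors independently of $k$; so the conditions "$S^+_w\flvec{d}$ is a filtration of $S^+_w M$" and "$(C^+)^r \flvec{d} = 0$" depend only on $\dimve M$ and $\flvec{d}$, not on $k$ or on the particular chosen representative $M$.

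The only real obstacle is bookkeeping: one must verify that applying a single $S^+_{a_i}$ to the already-reflected data $(S^+_{a_{i-1}}\cdots S^+_{a_1}M,\, S^+_{a_{i-1}}\cdots S^+_{a_1}\flvec{d})$ is legitimate, i.e.\ that the hypothesis "$a_i$ sink" of Theorem \ref{theom:reflflagmodq} is respected (immediate from admissibility of the ordering) and that the numerical filtration condition is tracked correctly when it fails, so that the empty/non-empty dichotomy propagates without loss through all $n$ steps of the Coxeter functor; once this is handled cleanly, the induction closes.
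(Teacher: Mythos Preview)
Your proposal is correct and follows essentially the same route as the paper: both arguments iterate Theorem~\ref{theom:reflflagmodq} one sink reflection at a time, using Remark~\ref{reflflag:rem:reflfilt} to detect when the reflected tuple ceases to be a filtration (forcing emptiness), and then reduce to the trivial flag variety over the zero representation. The only cosmetic difference is that you package the iteration as an induction on $r$ while the paper simply applies the theorem $rn$ times directly; your explicit remark on field independence via the combinatorial nature of $S^+_a$ on dimension vectors is a point the paper leaves implicit.
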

\begin{proof}
    Using remark \ref{reflflag:rem:reflfilt} we obtain that for each reflection at a sink
    $a$ of $Q$ we have that
    $S^+_a \flvec{d}$ is again a filtration of $S^+_a M$ if and only if
    $(S^+_a \flvec{d})^{\nu-1} \le \dimve S^+_a M$.
    If this is not the case, then
    the quiver flag is empty by theorem \ref{theom:reflflagmodq}.
    Therefore, if the quiver flag is non-empty, then for every
    intermediate sequence $w$ of admissible sink reflections
    we have that $S^+_w \flvec{d}$ is a filtration of $S^+_w M$. We call this condition (*).

    Assume that (*) holds. Iteratively applying
    theorem \ref{theom:reflflagmodq} we have that
    $\Fl[Q]{\flvec{d}}{M}$ is empty if and only if $\Fl[Q]{(C^+)^r\flvec{d}}{(C^+)^r M} = \Fl[Q]{(C^+)^r\flvec{d}}{0}$
    is empty.
    There is only
    one filtration of the $0$ representation, namely $(0,0,\dots,0)$. Therefore,
    $\Fl[Q]{\flvec{d}}{M}$ is non-empty if and only if $(C^+)^r\flvec{d} = 0$.
    This proves the first part since we already have seen that if (*) does not hold, then
    $\Fl[Q]{\flvec{d}}{M}$ is empty.
    
    Assume now that $k$ is a finite field with $q$ elements. If (*) does not
    hold, then the quiver flag is empty and the claim holds.
    Assume therefore that (*) holds. As before, applying
    theorem \ref{theom:reflflagmodq} yields that
    \[ \#\Fl[Q]{\flvec{d}}{M} = \#\Fl[Q]{(C^+)^r\flvec{d}}{(C^+)^r M} = \#\Fl[Q]{(C^+)^r\flvec{d}}{0} \mod q.\]
    There is only
    one filtration of the zero representation, namely $(0,0,\dots,0)$, and
    the number of flags of this type is obviously equal to one.
    This concludes the proof.
\end{proof}
\section{Dynkin case}
In this section let $Q$ be
a Dynkin quiver. We first introduce the generic Hall algebra $\mathcal{H}_q(Q)$ and the
composition monoid $\mathcal{CM}(Q)$ and then use the results of the previous section to show that
$\mathcal{H}_0(Q) \cong \mathcal{CM}(Q)$, where the isomorphism is given by sending $u_i$ to
$S_i$.

Since $Q$ is Dynkin, every representation is preprojective. Moreover,
indecomposable representations of $Q$ are in bijection with
the set of positive roots $\Delta_+$ of the Lie algebra corresponding to the underlying Dynkin diagram.
An isomorphism class is therefore given by a function from $\Delta_+$ to $\N$ with finite support. We denote this
set by $\Phi$. For each element $\mu \in \Phi$ and each field $k$ we can choose a $k$-representation
$M(\mu, k)$ having isomorphism class $\mu$.

Hall polynomials exist
with respect to $\Phi$, as shown by Ringel \cite{Ringel_hallpolysforrepfiniteheralgs}.
More precisely, for $\mu, \nu, \xi \in \Phi$ there is a polynomial
$f^{\xi}_{\mu \nu} (q) \in \Z[q]$ such that for each finite field $k$ with $q_k$ elements
we have
\[ F^{M(\xi,k)}_{M(\mu,k) M(\nu,k)} = f^{\xi}_{\mu \nu} (q_k).\]

We define the generic Hall algebra $\mathcal{H}_q (Q)$ to be the free $\Z[q]$-module with basis
$\Set {u_{\alpha} | \alpha \in \Phi}$ and multiplication given by:
\[u_{\mu} \diamond u_{\nu} = \sum_{\xi} f^{\xi}_{\mu \nu} (q) u_{\xi}. \]
The generic composition algebra $\mathcal{C}_q(Q)$ is the subalgebra of $\mathcal{H}_q(Q)$ generated
by the simple representations without self-extensions, or more precisely their isomorphism
classes. For an acyclic quiver theses are exactly the $u_i$. If the quiver is fixed, then we often write $\mathcal{H}_q$ and
$\mathcal{C}_q$ instead of $\mathcal{H}_q(Q)$
and $\mathcal{C}_q(Q)$. Moreover, for convenience we identify for any representation $M \cong M(\alpha, k)$, $u_M$
with $u_\alpha$ and $f^{X}_{M N}$ with $f^{\xi}_{\mu \nu} (q)$.

When calculating Hall polynomials, certain quantum numbers appear. Let $R$ be some commutative ring
and let $q \in R$. Usually $R$ will be $\Z[q]$, the polynomial ring in one variable. We define for $r,n \in \N$,
$0 \le r \le n$:
\begin{align*}
    [n]_q &:=  1 + q + \dots + q^{n-1} \\
    [n]_q ! &:= \prod_{i=1}^{n} [i]_q \\
    \qbinom{n}{r}_q &:= \frac{[n]_q !}{[r]_q ! [n-r]_q!}.
\end{align*}
Obviously, $\qbinom{n}{r}_0 = 1$.

Fix
an algebraically closed field $k$.
We say that a representation $M$ degenerates to $N$, $M \ledeg N$, if
$\Orbit_N \subseteq \overline{\Orbit_M}$, where we take the closure in the Zariski topology.

For two arbitrary sets $U \subseteq \Rep{\dvec{d}}, V \subseteq \Rep{\dvec{e}}$ we define
\begin{align*}
	\mathcal{E}(U,V) := \{ M \in \Rep{\dvec{d} + \dvec{e}} \: |& \: \exists \:A \in U, B \in V \
	\text{and a short exact sequence } \\
	 & 0 \rightarrow B \rightarrow M \rightarrow A \rightarrow 0 \}.
\end{align*}

The multiplication on closed irreducible $\GL_\dvec{d}$-stable respectively $\GL_\dvec{e}$-stable
subvarieties $\mathcal{A} \subseteq \Rep{\dvec{d}},
\mathcal{B} \subseteq \Rep{\dvec{e}}$ is defined as:
\[ \mathcal{A} * \mathcal{B} := \mathcal{E}(\mathcal{A}, \mathcal{B}).\]
The set of closed irreducible subvarieties of nilpotent representations with
this multiplication is a monoid with unit $\Rep{\dvec{0}}$, the generic extension monoid $\mathcal{M}(Q)$. The composition monoid
$\mathcal{CM}(Q)$ is the submonoid
generated by the orbits of simple representations without self-extensions. All this is due to Reineke \cite{Reineke_monoid}.
For any word $w = (i_1, \dots, i_r)$ in vertices of $Q$ we define
$\mathcal{A}_w := \Orbit_{S_1} * \dots * \Orbit_{S_r}$. This is an element of $\mathcal{CM}(Q)$.

We can now use the machinery we developed in the previous section to prove
that, for $Q$ a Dynkin quiver, the generic composition algebra
specialised at $q=0$
and the composition monoid are isomorphic.

\begin{proposition}
    \label{reflflags:propn:dynkinallone}
    Let $X$ be a $k$-representation of $Q$ and $w$ a word in vertices of $Q$.
    Then the condition that $X$
    has a filtration of type $w$ only depends on $w$ and $[X] \in \Phi$ and not on the choice of $X$ or the field
    $k$.

    Moreover, we have that
    \[ u_w = \sum_{[X] \in [\mathcal{A}_w]} u_{[X]} \in \mathcal{H}_0(Q). \]
\end{proposition}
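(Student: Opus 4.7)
For the first assertion I would invoke Corollary~\ref{reflflags:cory:preprojone} directly: since $Q$ is Dynkin every $k$-representation is preprojective, so that corollary applies to $M = X$ with $\flvec{d} = \flvec{d}(w)$. The criterion it provides for non-emptiness of $\Fl[Q]{\flvec{d}(w)}{X}$ involves only iterated sink reflections of the pair $([X], \flvec{d}(w))$, and is therefore intrinsic to $([X], w)$ and independent of a representative $X$ or of the field $k$.

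For the second assertion, my plan is to pin down the coefficient $f_w^X(0)$ of $u_{[X]}$ in $u_w \in \mathcal{H}_0(Q)$ via the Hall polynomial $f_w^X(q) \in \Z[q]$, whose existence for Dynkin $Q$ is Ringel's theorem. Combining the first assertion with the second half of Corollary~\ref{reflflags:cory:preprojone} I obtain a dichotomy: either $\Fl[Q]{\flvec{d}(w)}{X}$ is empty for every $k$, so that $F_w^X = f_w^X(q_k) = 0$ for every finite $k$, whence $f_w^X \equiv 0$ and $f_w^X(0) = 0$; or it is non-empty for every $k$, in which case $f_w^X(q_k) \equiv 1 \pmod{q_k}$ for every prime power $q_k$. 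In the second case the integer $f_w^X(0) - 1$ is divisible by every prime and must therefore vanish, so $f_w^X(0) = 1$. Specialising the identity $u_w = \sum_{[X]} f_w^X(q)\, u_{[X]}$ at $q = 0$ thus gives $u_w = \sum u_{[X]}$ in $\mathcal{H}_0(Q)$, the sum running over iso classes $[X]$ with $\Fl[Q]{\flvec{d}(w)}{X} \neq \emptyset$.

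It then remains to identify this index set with $[\mathcal{A}_w]$. Unwinding the iterated generic extension $\mathcal{A}_w = \Orbit_{S_{i_1}} * \cdots * \Orbit_{S_{i_r}} = \mathcal{E}(\cdots \mathcal{E}(\Orbit_{S_{i_1}}, \Orbit_{S_{i_2}}), \ldots, \Orbit_{S_{i_r}})$, an induction on the length of $w$ shows that $X \in \mathcal{A}_w$ is equivalent to $X$ admitting a subrepresentation isomorphic to the relevant simple with quotient in $\mathcal{A}_{w'}$, and hence to $X$ admitting a filtration with successive simple subquotients prescribed by $w$, i.e.\ $\Fl[Q]{\flvec{d}(w)}{X} \neq \emptyset$. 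I expect the main obstacle to be a bookkeeping point in this last step: Reineke's generic extension monoid lives among closed irreducible $\GL$-stable subvarieties, so $\mathcal{A}_w$ must in principle be interpreted as the Zariski closure of the iterated $\mathcal{E}(\cdots)$, and one must verify that no new isomorphism class appears in passing to the closure, so that $[\mathcal{A}_w]$ coincides set-theoretically with the iso classes admitting a filtration of type $\flvec{d}(w)$. For $Q$ Dynkin this should follow from the finiteness of $\GL_{\dvec{d}^\nu}$-orbits on $\Rep[Q]{\dvec{d}^\nu}$ together with the fact that the locus of representations admitting such a filtration is already a closed union of orbits.
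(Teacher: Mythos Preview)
Your argument is correct and matches the paper's proof essentially verbatim: Corollary~\ref{reflflags:cory:preprojone} for both the field-independence and the congruence $F_w^X \equiv 1 \pmod q$, then Ringel's Hall polynomials to deduce $f_w^{[X]}(0)\in\{0,1\}$. The closure worry you raise at the end is in fact moot, since by Lemma~\ref{lmm:geomflags:projtriv} the map $\pi_1\colon \RepFl[Q]{\flvec{d}(w)} \to \Rep[Q]{\dvec{d}^\nu}$ is projective, so its image---precisely the locus of representations admitting a filtration of type $w$---is already closed and $\GL$-stable, whence $\mathcal{A}_w = \mathcal{A}_{\flvec{d}(w)}$ on the nose with no new orbits appearing.
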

\begin{proof}
    Since all representations of $Q$ are preprojective, the first part of the
    statement follows directly from corollary \ref{reflflags:cory:preprojone}.
    Therefore, the sum in the second part is well-defined (i.e. the set
    $[\mathcal{A}_w]$ does not depend on the field).

    If $k$ is a finite field with $q$ elements, corollary \ref{reflflags:cory:preprojone} also yields that
    \[ F_w^X = \# \Fl[Q]{\flvec{d}(w)}{X}  = \begin{cases}
        1 \mod q & \text{if } X \in \mathcal{A}_w,\\
        0  & \text{else.}
    \end{cases}
    \]
    Since $F_w^X= f_w^{[X]}(q)$ and we just showed that this is one modulo $q$ for all prime powers $q$
    we have that
    \[f_w^{[X]}(0) =\begin{cases}
        1 & \text{if } [X] \in [\mathcal{A}_w],\\
        0 & \text{else.}
    \end{cases}
    \]
    This yields the claim.
\end{proof}
We obtain the main theorem for the Dynkin case.
\begin{theorem}
  The map
  \begin{alignat*}{2}
    \Psi &\colon\quad & \Q\mathcal{M}(Q) & \rightarrow \mathcal{H}_0(Q)\\
    && \mathcal{A} & \mapsto \sum\limits_{[M] \in [\mathcal{A}]} u_{[M]}
  \end{alignat*}
  is an isomorphism of $\Q$-algebras.
  \label{reflflags:theom:dynkin}
\end{theorem}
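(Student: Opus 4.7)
The idea is to combine Proposition \ref{reflflags:propn:dynkinallone} with the observation that, in the Dynkin case, every element of $\mathcal{M}(Q)$ arises as some $\mathcal{A}_w$, and then to conclude by a unitriangularity argument.

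First I would verify that $\Psi$ is well-defined and $\Q$-linear. Since $Q$ is Dynkin, each representation variety $\Rep[Q]{\dvec{d}}$ contains only finitely many $\GL_\dvec{d}$-orbits (indexed by the relevant restriction of $\Phi$), so every closed irreducible $\GL_\dvec{d}$-stable subvariety is the Zariski closure $\overline{\Orbit_M}$ of a unique orbit. Hence $\mathcal{M}(Q)$ may be identified with $\Phi$, the set $[\overline{\Orbit_M}]$ is precisely the finite set of classes $[N]$ with $M \ledeg N$, and $\Psi$ sends the basis element $\overline{\Orbit_M}$ to the well-defined finite sum $\sum_{M \ledeg N} u_{[N]}$.

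Next I would check that $\Psi$ is multiplicative. By Reineke's result in \cite{Reineke_monoid}, one has $\mathcal{CM}(Q) = \mathcal{M}(Q)$ in the Dynkin case, so every element of $\mathcal{M}(Q)$ is of the form $\mathcal{A}_w$ for some word $w$ in the vertices of $Q$. The defining identities $\mathcal{A}_w * \mathcal{A}_v = \mathcal{A}_{wv}$ and $u_w \diamond u_v = u_{wv}$ together with Proposition \ref{reflflags:propn:dynkinallone} (which gives $\Psi(\mathcal{A}_w) = u_w$) immediately yield $\Psi(\mathcal{A}_w * \mathcal{A}_v) = u_{wv} = \Psi(\mathcal{A}_w) \diamond \Psi(\mathcal{A}_v)$; extending $\Q$-linearly gives multiplicativity on all of $\Q\mathcal{M}(Q)$.

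Finally, for bijectivity I would invoke unitriangularity with respect to $\ledeg$: the formula $\Psi(\overline{\Orbit_M}) = u_{[M]} + \sum_{M \ledeg N,\, M \neq N} u_{[N]}$ exhibits the matrix of $\Psi$ in the bases $\{\overline{\Orbit_M}\}_{[M] \in \Phi}$ of $\Q\mathcal{M}(Q)$ and $\{u_{[M]}\}_{[M] \in \Phi}$ of $\mathcal{H}_0(Q)$ (the latter is a $\Q$-basis by the existence of Hall polynomials in the Dynkin case) as triangular with $1$s on the diagonal, after fixing any linear refinement of $\ledeg$. Such a matrix is invertible, so $\Psi$ is both injective and surjective. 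The genuine difficulty of the theorem was already absorbed into Proposition \ref{reflflags:propn:dynkinallone}, which in turn rests on the reflection-functor machinery built up in the preceding section; everything past that proposition is formal book-keeping, and the main obstacle is simply to align the three ingredients (finiteness of orbits, $\mathcal{CM}(Q) = \mathcal{M}(Q)$, and the degeneration-triangularity) cleanly.
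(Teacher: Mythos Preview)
Your proposal is correct and follows essentially the same line as the paper: both use $\mathcal{CM}(Q)=\mathcal{M}(Q)$ in the Dynkin case to write every element as $\mathcal{A}_w$, and both invoke Proposition~\ref{reflflags:propn:dynkinallone} to obtain $\Psi(\mathcal{A}_w)=u_w$ and hence multiplicativity. The only minor difference is in the bijectivity step: the paper observes that $\Psi$ is a graded surjection between spaces whose graded pieces have equal finite dimension (both indexed by $\Phi$), whereas you argue directly via unitriangularity of $\Psi(\overline{\Orbit_M})=u_{[M]}+\sum_{M\ledeg N,\,N\neq M}u_{[N]}$ with respect to the degeneration order; both are valid and equally short.
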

\begin{proof}
    Note that for $Q$ Dynkin we have that $\mathcal{M}(Q) \cong \mathcal{CM}(Q)$
    and $\mathcal{H}_q(Q) \cong \mathcal{C}_q(Q)$. Therefore,
    for each $\mathcal{A} \in \mathcal{M}(Q)$ there is a word $w$ in
    vertices of $Q$ such that $\mathcal{A} = \mathcal{A}_w$. In
    the previous proposition we showed that the map sending $\mathcal{A}_w$ to
    \[\Psi(\mathcal{A}_w) = \sum_{[M] \in [\mathcal{A}_w]} u_{[M]} = u_w\]
    is well-defined. Therefore, $\Psi$ is a homomorphism, since
    \[\Psi(\mathcal{A}_{w} * \mathcal{A}_{v}) = \Psi(\mathcal{A}_{wv}) =
    u_{wv}=u_{w} \diamond u_v = \Psi(\mathcal{A}_{w}) \diamond \Psi(\mathcal{A}_{v}).  \]
    
%
    $\Psi$ is surjective since it is a homomorphism, and the generators
    $u_i$ of $\mathcal{H}_0(Q)$ are in the image of $\Psi$. More precisely,
    $\Psi(\Orbit_{S_i}) = u_i$. Obviously, $\Psi$ is a graded morphism
    of graded algebras. The dimension of the $\dvec{d}$-th graded part
    of $\Q\mathcal{M}(Q)$ is the same as the dimension
    of the $\dvec{d}$-th graded part of $\mathcal{H}_0(Q)$, namely the number of isomorphism classes of representations
    of dimension vector $\dvec{d}$. Since each graded part is finite dimensional and
    $\Psi$ is surjective, we have
    that $\Psi$ is an isomorphism.
\end{proof}
\section{Further work}
In an upcoming paper, we show that if $Q$ is an acyclic extendend Dynkin quiver, then
\begin{alignat*}{2}
\Phi&\colon\quad& \mathcal{C}_0(Q) &\rightarrow \mathcal{CM}(Q)\\
  && u_i &\rightarrow S_i
\end{alignat*}
is a homomorphism and give generators for its
non-trivial kernel. We do this by a close analysis of the geometric version
of reflection functors and the result of a previous paper on the Hall algebra of an oriented cycle.
\bibliography{repsofalgs}
\end{document}